\newcommand{\ap}[1]{\prescript{a}{}{#1}}
\def\reals{\mathbbm{R}}
\def\ereals{\overline{\reals}}
\def\comp{\mathop{\text{\scriptsize\raise 1pt \hbox{$\circ$}}}}
\def\infconv{\mathop{\text{\scriptsize\raise 1pt \hbox{$\square$}}}}
\def\argmin{\mathop{\rm argmin}\limits}
\def\minimize{\mathop{\rm minimize}\limits}
\def\maximize{\mathop{\rm maximize}\limits}
\def\over{\quad\mathop{\rm over}\quad}
\def\st{\mathop{\rm subject\ to}}
\def\dom{\mathop{\rm dom}\nolimits}
\def\ovr{\mathop{\rm over}\ }
\def\upto{{\raise 1pt \hbox{$\scriptstyle \,\nearrow\,$}}}
\def\downto{{\raise 1pt \hbox{$\scriptstyle \,\searrow\,$}}}
\def\cl{\mathop{\rm cl}\nolimits}
\def\lin{\mathop{\rm lin}}
\def\epi{\mathop{\rm epi}\nolimits}
\def\tos{\rightrightarrows}
\def\FF{(\F_t)_{t=0}^T}
\def\A{{\cal A}}
\def\C{{\cal C}}
\def\F{{\cal F}}
\def\G{{\cal G}}
\def\L{{\cal L}}
\def\M{{\cal M}}
\def\N{{\cal N}}
\def\S{{\cal S}}
\def\T{{\cal T}}
\def\U{{\cal U}}
\def\V{{\cal V}}
\def\X{{\cal X}}
\def\Y{{\cal Y}}
\newtheorem{theorem}{Theorem}
\newtheorem{lemma}[theorem]{Lemma}
\newtheorem{corollary}[theorem]{Corollary}
\newtheorem{example}[theorem]{Example}
\newtheorem{cexample}[theorem]{Counterexample}
\newtheorem{remark}[theorem]{Remark}
\theoremstyle{definition}
\newtheorem{assumption}[theorem]{Assumption}
\title{Duality in convex stochastic optimization}
\author{Teemu Pennanen\thanks{Department of Mathematics, King's College London, Strand, London, WC2R 2LS, United Kingdom, teemu.pennanen@kcl.ac.uk} \and Ari-Pekka Perkki\"o\thanks{Mathematics Institute, Ludwig-Maximilian University of Munich, Theresienstr. 39, 80333 Munich, Germany, a.perkkioe@lmu.de. Corresponding author}}
\begin{document}

\maketitle

\begin{abstract}
This paper studies duality and optimality conditions in general convex stochastic optimization problems introduced by Rockafellar and Wets in \cite{rw76}. We derive an explicit dual problem in terms of two dual variables, one of which is the shadow price of information while the other one gives the marginal cost of a perturbation much like in classical Lagrangian duality. Existence of primal solutions and the absence of duality gap are obtained without compactness or boundedness assumptions. In the context of financial mathematics, the relaxed assumptions are satisfied under the well-known no-arbitrage condition and the reasonable asymptotic elasticity condition of the utility function. We extend classical portfolio optimization duality theory to problems of optimal semi-static hedging. Besides financial mathematics, we obtain several new frameworks in stochastic programming and stochastic optimal control.

\end{abstract}

\noindent\textbf{Keywords.} Convex duality, stochastic programming, stochastic optimal control, financial mathematics
\newline
\newline
\noindent\textbf{AMS subject classification codes.} 90C15, 90C46, 46N10, 93E20, 46N10, 91G80

\section{Introduction}

Given a probability space $(\Omega,\F,P)$ with a filtration $\FF$ (an increasing sequence of sub-$\sigma$-algebras of $\F$), consider the problem
\begin{equation}\label{sp}\tag{$SP$}
  \begin{aligned}
    &\minimize\quad & & Ef(x,\bar u):=\int f(x(\omega),\bar u(\omega),\omega)dP(\omega)\quad\ovr\text{$x\in\N$}
  \end{aligned}
\end{equation}
where $\N$ is a linear space of stochastic processes $x=(x_t)_{t=0}^T$ adapted to $\FF$ (i.e., $x_t$ is $\F_t$-measurable) and $\bar u$ is a $\reals^m$-valued random variable. 
We assume that $f$ is a {\em convex normal integrand} on $\reals^n\times\reals^m\times\Omega$, i.e.\ $f(\cdot,\omega)$ is a closed convex function for every $\omega\in\Omega$ and $\omega\mapsto\epi f(\cdot,\omega)$ is an $\F$-measurable set-valued mapping; see \cite[Chapter~14]{rw98}. Here and in what follows, we define the integral of an extended real-valued random variable as $+\infty$ unless its positive part is integrable. The integral of any extended real-valued measurable function is then a well defined extended real number so it follows that $Ef$ is a well-defined convex function on $L^0(\reals^n\times\reals^m)$.

Problems of the form \eqref{sp} were first studied in \cite{rw76} where it was observed that many more specific stochastic optimization problems can be written in this unified format. Examples include more traditional formulations of stochastic programming, convex stochastic control and various problems in financial mathematics; see Section~\ref{sec:appdual} below. In \cite{rw76}, problem \eqref{sp} was analyzed through dynamic programming and convex duality. Soon after, \cite{evs76} extended the dynamic programming principle by removing the convexity assumption but, like \cite{rw76}, assumed the set of feasible solutions to be bounded. The boundedness assumptions were removed in \cite{pen11c,pp12,per16,bpp18,ppr16,pp22}. 

Like \cite{rw76,pli82,pen11c,bpp18}, the present paper studies problem \eqref{sp} with the functional analytic techniques of convex duality. This will yield dual problems whose optimum values coincide with that of \eqref{sp} and whose optimal solutions can be used to characterize those of \eqref{sp}. We extend the classic results of \cite{rw76,rw78,rw83} so as to cover various duality results developed independently in stochastics and financial mathematics e.g.\ in \cite{dk94,sch92,kab99,sch4,pp10}. The new results allow also for significant extensions to central models in stochastic programming, stochastic optimal control and financial mathematics. In particular, we extend the inequality constrained models of \cite{rw78} by including equality constraints and allowing for unbounded strategies. In stochastic optimal control, we obtain a scenariowise maximum principle. We also extend the classical duality results of financial mathematics to optimal semistatic hedging problems where one optimizes over dynamic trading strategies as well as statically held derivative portfolios. In each application, we establish the existence of primal solutions and the absence of a duality gap.


Much like in \cite{rw76,rw78,pen11c,bpp18}, our strategy is to analyze \eqref{sp} through the general duality framework of \cite{roc74}. We deviate from the above references, however, in that we employ two dualizing parameters: the random vector $\bar u$ in \eqref{sp} and another one that perturbs the adaptedness constraint on $x$. This yields an explicit dual problem for \eqref{sp} in terms of two dual variables: one is the ``shadow price of information'' studied e.g.\ in \cite{wet75,rw76,pli82,dav92,db92,pp18a} and the other one gives the marginal cost of changing $\bar u$. As a special case, we obtain the dual problem of \cite{rw78} for stochastic optimization problems with inequality constraints. We find new duality frameworks for many other problem classes including optimal stoping, stochastic optimal control and portfolio optimization. Moreover, our results apply without the compactness and boundedness assumptions made in \cite{rw78}.

Without the boundedness assumptions, problem \eqref{sp} does not directly fit the framework of \cite{roc74} which assumes that the optimal solutions are sought from a locally convex vector space. We will thus first, in Section~\ref{sec:dual}, restrict the decision strategies $x$ to a locally convex space $\X$ of $\reals^n$-valued random variables. Straightforward application of the functional analytic duality theory then yields a dual problem and optimality conditions for the restricted problem. 
We return to the original problem \eqref{sp} in Section~\ref{sec:rel} and find that its optimum value as a function of the parameters $(z,u)$ has the same lower semicontinuous hull as that of the restricted problem. It follows that their dual problems coincide and, by an application of Fenchel inequality and \thref{lem:perp}, we find scenariowise optimality conditions for \eqref{sp}. Section~\ref{sec:adg} recalls sufficient conditions for the lower semicontinuity of the optimum value function of \eqref{sp}. Section~\ref{sec:appdual} illustrates the new results with applications to more specific problems classes.


\section{Integral functionals in duality}\label{sec:ifd}

Convex duality is based on the theory of conjugate functions on dual pairs of locally convex topological vector spaces; see \cite{roc74}. The first part of this section reviews spaces of random variables in separating duality with each other while the second part reviews conjugation of integral functionals on such spaces. This forms the functional analytic setting for the duality theory of stochastic optimization developed in the followup sections. For full generality, we make minimal assumptions on the spaces of random variables. The classical Lebesgue and Orlicz spaces, $L^p$ and $L^\Phi$ are covered as special cases but also many others that come up naturally e.g.\ in engineering and finance.

\subsection{Dual spaces of random variables}\label{sec:dsrv}

Let $\U$ and $\Y$ be linear spaces of $\reals^m$-valued random variables in separating duality under the bilinear form
\[
\langle u,y\rangle := E[u\cdot y].
\]
This means that $u\cdot y\in L^1$ for all $u\in\U$ and $y\in\Y$ and that for every nonzero $u\in\U$, there exists a $y\in\Y$ such that $\langle u,y\rangle\ne 0$ and vice versa. As usual, we identify random variables that coincide almost surely so the elements of $\U$ and $\Y$ are actually equivalence classes of random variables that coincide almost surely. We will also assume that the spaces are {\em decomposable} and {\em solid}. Decomposability means that
\[
1_Au+1_{\Omega\setminus A}u'\in\U
\]
for every $u\in\U$ and $u'\in L^\infty$ while solidity means that if $\bar u\in\U$ and $u\in L^0$ are such that $|u^i|\le|\bar u^i|$ almost surely for every $i=1,\ldots,m$, then $u\in\U$; similarly for $\Y$. Solidity implies that
\[
\U=\U_1\times\cdots\times\U_m\quad\text{and}\quad \Y=\Y_1\times\cdots\times\Y_m,
\]
where $\U_i$ and $\Y_i$ are solid decomposable spaces of real-valued random variables in separating duality under the bilinear form $(u_i,y_i)\mapsto E[u_iy_i]$. In particular,
\begin{equation}\label{eq:solidci}
u_iy_i\in L^1\quad\text{and}\quad \langle u,y\rangle = \sum_{i=1}^mE[u_iy_i]\quad\forall u\in\U,y\in\Y.
\end{equation}
Given a solid space of real-valued random variables $\U_0$, the space  $\{u\in L^0(\reals^m)\mid |u|\in\U_0\}$ is solid and it can be written as $\U_0^m$, the $m$-fold Cartesian product of $\U_0$. A solid space containing all constant functions is decomposable. The following shows that the converse does not hold.
\begin{cexample}\label{ex:decomp}
Let $x\ge 1$ be an unbounded real-valued random variable and $\X:=L^\infty +Lin (x1_A\mid A\in\F)$. Then $\X$ is decomposable, by construction,  but not solid, since it does not contain $\sqrt{x}$.
\end{cexample}

Decomposable solid spaces of random variables in separating duality include Lebesgue spaces, Orlicz spaces, Marcinkiewich spaces paired with Lorentz spaces, spaces of finite moments $\|u\|_{L^p}$ for all $p\in(1,\infty)$ as well as the general class of Banach Function Spaces or, even more generally, locally convex function spaces; see \cite{pp220} and its references. The spaces of continuous functions or various Sobolev spaces of functions on $\reals^n$ fail to be decomposable or solid. The space $L^0$ of all random variables is decomposable and solid but if $(\Omega,\F,P)$ is atomless, it  cannot be paired with a nontrivial space of random variables. Indeed, if $y\in L^0$ is nonzero, then there exists $\epsilon>0$ and $A\in\F$ such that $|y|1_A> \epsilon$ and $P(A)$. Since the space is atomless, there exists $\eta<0$ with $E[1_A \eta]=-\infty$.  Choosing $u=1_A y \eta$, we get $E[u\cdot y]=-\infty$.

Given a topology on $\U$, the corresponding topological dual of $\U$ is the linear space of all continuous linear functionals on $\U$. A topology is {\em compatible} with the bilinear form on $\U\times\Y$ if every continuous linear functional can be expressed in the form
\[
u\mapsto\langle u,y\rangle
\]
for some $y\in\Y$. Such topologies can be characterized in terms of the ``weak'' and ``Mackey'' topologies associated with the bilinear form. The {\em weak topology} $\sigma(\U,\Y)$ on $\U$ is the topology generated by linear functionals $u\mapsto\langle u,y\rangle$ where $y\in\Y$. Similarly for $\Y$. The {\em Mackey topology} is the topology generated by the sublinear functionals
\[
\sigma_D(u):=\sup_{y\in D}\langle u,y\rangle,
\]
where $D\subset\Y$ is $\sigma(\Y,\U)$-compact. Similarly for $\Y$. Given a topology on $\U$, the corresponding topological dual can be identified with $\Y$ if and only if the topology is between $\sigma(\U,\Y)$ and $\tau(\U,\Y)$. If $\U$ is Fr\'echet (e.g.\ Banach) and $\Y$ is its topological dual, then the $\sigma(\Y,\U)$-compact sets are the bounded sets in $\Y$, so $\tau(\U,\Y)$ is the strong topology; see \cite{kn76}.

The following is from \cite{pp12}.
\begin{lemma}\thlabel{lem:rel}
We have $L^\infty\subseteq \U \subseteq L^1$ and $L^\infty\subseteq \Y\subseteq L^1$ and 
\begin{align*}
\sigma(L^1,L^\infty)|_\U&\subseteq\sigma(\U,\Y),\quad\sigma(\U,\Y)|_{L^\infty}\subseteq\sigma(L^\infty,L^1),\\
\tau(L^1,L^\infty)|_\U&\subseteq\tau(\U,\Y),\quad\tau(\U,\Y)|_{L^\infty}\subseteq\tau(L^\infty,L^1).
\end{align*}
The $L^0$-topology on $\U$ is weaker than $\tau(\U,\Y)$.
\end{lemma}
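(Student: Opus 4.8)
The plan is to establish the four assertions in the order listed, deriving each from decomposability, solidity, and the separating duality.

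\textbf{(i) The inclusions $L^\infty\subseteq\U\subseteq L^1$ (and symmetrically for $\Y$).} For the lower inclusions, apply decomposability with $u=0\in\U$: for any $u'\in L^\infty$ we have $1_\emptyset\cdot 0+1_\Omega u'=u'\in\U$, so $L^\infty\subseteq\U$, and likewise $L^\infty\subseteq\Y$. For the upper inclusions, pass to the componentwise representation $\U=\U_1\times\cdots\times\U_m$, $\Y=\Y_1\times\cdots\times\Y_m$; for each component $u_i$ of $u\in\U$, test against $y_i:=\sign(u_i)\in L^\infty\subseteq\Y_i$ (legitimate by the lower inclusion just shown for $\Y$). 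Since $u_iy_i\in L^1$ by \eqref{eq:solidci}, we get $E|u_i|=E[u_iy_i]<\infty$, so $u\in L^1$ and $\U\subseteq L^1$; the same argument with the roles of $\U$ and $\Y$ interchanged gives $\Y\subseteq L^1$.

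\textbf{(ii) The weak-topology inclusions.} These are purely formal once (i) is known, since a topology generated by a subfamily of linear functionals is coarser. The topology $\sigma(L^1,L^\infty)|_\U$ is generated by $u\mapsto E[u\cdot y]$ with $y\in L^\infty$; as $L^\infty\subseteq\Y$ and $E[u\cdot y]=\langle u,y\rangle$ on $\U$, this is a subfamily of the generators of $\sigma(\U,\Y)$, whence $\sigma(L^1,L^\infty)|_\U\subseteq\sigma(\U,\Y)$. Dually, $\sigma(\U,\Y)|_{L^\infty}$ is generated by $u\mapsto\langle u,y\rangle$ with $y\in\Y\subseteq L^1$, a subfamily of the generators of $\sigma(L^\infty,L^1)$, giving the second inclusion. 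The symmetric versions $\sigma(L^1,L^\infty)|_\Y\subseteq\sigma(\Y,\U)$ and $\sigma(\Y,\U)|_{L^\infty}\subseteq\sigma(L^\infty,L^1)$ follow identically and will be used below.

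\textbf{(iii) The Mackey inclusions.} The key point is that a set compact in a finer topology stays compact in any coarser one, so compactness transfers between the paired spaces and the Lebesgue spaces via (ii). Indeed, $\tau(L^1,L^\infty)$ is generated by the seminorms $\sigma_D$ with $D\subseteq L^\infty$ that are $\sigma(L^\infty,L^1)$-compact; by $\sigma(\Y,\U)|_{L^\infty}\subseteq\sigma(L^\infty,L^1)$ every such $D$ is also $\sigma(\Y,\U)$-compact, so $\sigma_D$ is among the generators of $\tau(\U,\Y)$, yielding $\tau(L^1,L^\infty)|_\U\subseteq\tau(\U,\Y)$. Conversely, a $\sigma(\Y,\U)$-compact $D\subseteq\Y$ is $\sigma(L^1,L^\infty)$-compact by $\sigma(L^1,L^\infty)|_\Y\subseteq\sigma(\Y,\U)$, so each generator $\sigma_D$ of $\tau(\U,\Y)$ restricts on $L^\infty$ to a generator of $\tau(L^\infty,L^1)$, giving $\tau(\U,\Y)|_{L^\infty}\subseteq\tau(L^\infty,L^1)$. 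The only care needed here is the bookkeeping that the subspace topologies agree, so that ``compact in $\Y$'' and ``compact in $L^\infty$'' (resp. $L^1$) refer to the same intrinsic compactness.

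\textbf{(iv) The $L^0$-topology.} I would exhibit a single $\tau(\U,\Y)$-continuous seminorm dominating the gauge of convergence in probability. Take $D:=\{y\in L^\infty(\reals^m):|y|\le 1\}$, the unit ball of $L^\infty$; by Banach--Alaoglu it is $\sigma(L^\infty,L^1)$-compact, hence $\sigma(\Y,\U)$-compact by the transfer in (iii). A pointwise (measurable) selection $y=u/|u|$ gives $\sigma_D(u)=\sup_{|y|\le 1}E[u\cdot y]=E|u|=\|u\|_{L^1}$, so $\|\cdot\|_{L^1}$ is $\tau(\U,\Y)$-continuous. Markov's inequality $P(|u|>\eps)\le\|u\|_{L^1}/\eps$ then shows the $L^0$-topology is coarser than the $L^1$-norm topology, which is coarser than $\tau(\U,\Y)$; chaining these gives the claim. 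I expect the compactness transfer in (iii) and the identification of $D$ in (iv) to be the only substantive points, the remainder being formal manipulation of generating families.
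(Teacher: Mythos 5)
Your proof is correct. Note that the paper itself gives no argument for this lemma (it is imported from \cite{pp12}), and your proof is essentially the standard one used there: decomposability yields $L^\infty\subseteq\U,\Y$; pairing against componentwise sign vectors yields $\U,\Y\subseteq L^1$; the weak-topology inclusions are formal, and they transfer compactness of the generating sets $D$ so that the Mackey inclusions follow; and the $\sigma(L^\infty,L^1)$-compactness of the $L^\infty$-unit ball makes $\|\cdot\|_{L^1}$ a $\tau(\U,\Y)$-continuous seminorm on $\U$, which gives the $L^0$ statement via Markov's inequality.
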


Given a decomposable space $\U$, its {\em K\"othe dual} is the linear space
\[
\{y\in L^0\mid u\cdot y\in L^1\quad \forall u\in\U\}.
\]
This is the largest space of random variables that can be paired with $\U$ with the bilinear form $(u,y)\mapsto E[u\cdot y]$. Clearly, the K\"othe dual of a solid space is solid. The following is well-known, e.g., in Lebesgue and Orlicz spaces.

\begin{lemma}\thlabel{lem:EGcont3}
Let $\U$ and $\Y$ be solid and $\G\subset\F$ a $\sigma$-algebra such that $E^\G\U\subset\U$. The mapping $E^\G:\U\to\U$ is weakly continuous if and only if $E^\G\Y\subset\Y$ and in this case,
\[
\langle E^\G u,y\rangle = \langle u,E^\G y\rangle\quad\forall u\in\U,\ y\in\Y.
\]
If $\Y$ is the K\"othe dual of $\U$, then $E^\G\Y\subset\Y$.
\end{lemma}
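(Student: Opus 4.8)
The plan is to reduce everything to the scalar self-adjointness of conditional expectation and then read off weak continuity from the abstract duality of dual pairs. Using solidity I would write $\U=\U_1\times\cdots\times\U_m$ and $\Y=\Y_1\times\cdots\times\Y_m$, so that $\langle E^\G u,y\rangle=\sum_{i}E[(E^\G u_i)y_i]$ and $\langle u,E^\G y\rangle=\sum_i E[u_i(E^\G y_i)]$, and argue one coordinate at a time. The algebraic core is the identity $E[(E^\G u_i)y_i]=E[(E^\G u_i)(E^\G y_i)]=E[u_i(E^\G y_i)]$, which I obtain from the tower property together with the rule for pulling a $\G$-measurable factor out of a conditional expectation. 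Since $\U,\Y\subseteq L^1$ by \thref{lem:rel}, the variables $u_i,y_i$ are integrable, so each conditioning step is justified whenever the relevant product lies in $L^1$; where signs would obstruct this I pass to $|u_i|,|y_i|$ and work in $[0,\infty]$, where the tower property and the pull-out rule hold unconditionally.

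For the implication $(\Leftarrow)$, I assume $E^\G\Y\subset\Y$. Then for $u\in\U$ and $y\in\Y$ both $E^\G u\in\U$ and $E^\G y\in\Y$, so all three terms of the core identity are finite and summing over $i$ yields the adjoint relation $\langle E^\G u,y\rangle=\langle u,E^\G y\rangle$. Because $E^\G y\in\Y$, the functional $u\mapsto\langle E^\G u,y\rangle=\langle u,E^\G y\rangle$ is $\sigma(\U,\Y)$-continuous for each fixed $y$, and as this holds for every $y\in\Y$ the operator $E^\G$ is weakly continuous.

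For $(\Rightarrow)$, I would use that $\sigma(\U,\Y)$ is compatible with the pairing, so every weakly continuous linear functional on $\U$ is represented by an element of $\Y$. Applying this to $u\mapsto\langle E^\G u,y\rangle$, the weak continuity of $E^\G$ produces a linear adjoint $T\colon\Y\to\Y$ with $\langle E^\G u,y\rangle=\langle u,Ty\rangle$ for all $u\in\U$, $y\in\Y$. The task is then to identify $T$ with $E^\G$: fixing $y$ and testing against bounded $u\in L^\infty\subseteq\U$ (again \thref{lem:rel}), the core identity gives $\langle E^\G u,y\rangle=E[u\cdot E^\G y]$, whence $E[u\cdot(Ty-E^\G y)]=0$ for all $u\in L^\infty$. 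Since $Ty,E^\G y\in L^1$ and $L^\infty$ separates $L^1$, this forces $Ty=E^\G y$; in particular $E^\G y\in\Y$, giving $E^\G\Y\subset\Y$ and turning the adjoint relation into the asserted one.

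Finally, if $\Y$ is the K\"othe dual of $\U$ then, by the product structure, $\Y_i$ is the K\"othe dual of $\U_i$. Fixing $y\in\Y$ and $u_i\in\U_i$, I estimate using $|E^\G y_i|\le E^\G|y_i|$ and conditioning in $[0,\infty]$:
\[
E\big[|u_i\,E^\G y_i|\big]\le E\big[|u_i|\,E^\G|y_i|\big]=E\big[(E^\G|u_i|)\,|y_i|\big].
\]
Here $|u_i|\in\U_i$ by solidity and $E^\G|u_i|\in\U_i$ by the standing hypothesis $E^\G\U\subset\U$, while $|y_i|\in\Y_i$; hence the right-hand side is finite, $u_i\,E^\G y_i\in L^1$ for all $u_i\in\U_i$, and so $E^\G y_i$ lies in $\Y_i$, i.e.\ $E^\G y\in\Y$. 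The main obstacle throughout is the integrability bookkeeping in the conditioning steps, and, in $(\Rightarrow)$, pinning down the a priori abstract adjoint $T$ as $E^\G$ via bounded test functions and the separation of $L^1$ by $L^\infty$.
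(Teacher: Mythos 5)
Your proof is correct and follows essentially the same route as the paper's: the self-adjointness identity $E[(E^\G u)\cdot y]=E[u\cdot E^\G y]$ via the tower property, weak continuity from the two inclusions, the converse by representing the functional $u\mapsto\langle E^\G u,y\rangle$ by some element of $\Y$ and identifying it with $E^\G y$ through testing against $L^\infty$, and the K\"othe-dual claim by a componentwise reduction to nonnegative variables. The only cosmetic difference is that you handle the last part with the bound $|E^\G y_i|\le E^\G|y_i|$ rather than the paper's reduction (by solidity and linearity) to a single nonnegative component, which amounts to the same computation.
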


\begin{proof}
If $u^i$, $y^i$, $(E^\G u)^iy^i$ and $u^i(E^\G y)^i$ are integrable, \thref{lem:ce} gives
\begin{equation}\label{l1adjoint}
E[E^\G u\cdot y] = E[(E^\G u)\cdot E^\G y] = E[u\cdot E^\G y].
\end{equation}
Thus, if $E^\G\U\subset\U$ and $E^\G\Y\subset\Y$, then, by \eqref{eq:solidci}, the function $u\mapsto E^\G u$ is weakly continuous. On the other hand, if $E^\G:\U\to\U$ is weakly continuous, then  $u\mapsto E[ E^\G u\cdot y]$ is $\sigma(\U,\Y)$-continuous for $y\in\Y$. Thus, there exists a $y'\in\Y$ such that $E[E^\G u\cdot y]=E[u\cdot y']$ for all $u\in\U$. Since $y\in L^1$, \eqref{l1adjoint} gives
\[
E[E^\G u\cdot y]=E[u\cdot E^\G y]\quad\forall u\in L^\infty.
\]
Thus, $y'=E^\G y$ almost surely.

Assume now that $\Y$ is the K\"othe dual of $\U$ and let $y\in\Y$. It suffices to show $E^\G y\in \Y$. By solidity and linearity, we may assume that at most one component $y^i$ of $y$ is nonzero and that it is nonnegative. Then $E^\G y^i$ is nonnegative. Since $\Y$ is the K\"othe dual, it suffices to show that  $E[u^i (E^\G y^i)]<\infty$ for every nonnegative $u\in\U$. By \thref{lem:ce},  $E[u^i (E^\G y^i)]= E[E^\G (u^i) y^i]$, where the right side is finite, since $E^\G\U\subset\U$.
\end{proof}

Let $\X$ and $\V$ be decomposable solid spaces of $\reals^n$-valued random variables in separating duality under the bilinear form
\[
(x,v)\mapsto E[x\cdot v].
\]
A linear mapping $\A:\X\to\U$ is {\em weakly continuous} if it is continuous with respect to the weak topologies. This means that $x\mapsto\langle \A x,y\rangle$ is $\sigma(\X,\V)$-continuous for all $y\in\Y$, or equivalently, there exists a linear mapping $\A^*:\Y\to\V$ such that
\[
\langle \A x,y\rangle = \langle x,\A^* y\rangle \quad\forall x\in\X,\ y\in\Y.
\]
The mapping $\A^*$ is known as the {\em adjoint} of $\A$.

\begin{lemma}\thlabel{lem:A0}
Let $A\in L^0(\reals^{m\times n})$ be a random matrix such that $A x\in\U$ for all $x\in\X$. The linear mapping $\A:\X\to\U$ defined pointwise by
\[
\A x=Ax\quad a.s.
\]
is weakly continuous if and only if  $A^*y\in\V$ for all $y\in\Y$, and in this case its adjoint is given pointwise by
\[
\A^*y=A^*y\quad a.s.
\]
If $\V$ is the K\"othe dual of $\X$, then $A^*y\in\V$ for all $y\in\Y$,
\end{lemma}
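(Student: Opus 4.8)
The plan is to follow the template of \thref{lem:EGcont3}, with the conditional expectation replaced by the pointwise matrix action and the key algebraic fact being the transpose identity $(Ax)\cdot y=x\cdot(A^*y)$, which holds pointwise for every $\omega$ by linear algebra, both sides being $\sum_{i,j}A_{ij}x_jy_i$. All the analytic content then lies in turning this pointwise identity into an identity of expectations, and the two separating pairings are precisely what furnish the requisite integrability.

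For the ``if'' part I would assume $A^*y\in\V$ for every $y\in\Y$ and set $\A^*y:=A^*y$. For arbitrary $x\in\X$ and $y\in\Y$ both products $(Ax)\cdot y$ and $x\cdot(A^*y)$ then lie in $L^1$ — the first because $Ax\in\U$ and $y\in\Y$, the second because $x\in\X$ and $A^*y\in\V$ — so integrating the transpose identity gives
\[
\langle\A x,y\rangle=E[(Ax)\cdot y]=E[x\cdot(A^*y)]=\langle x,\A^*y\rangle.
\]
Since $A^*y\in\V$, the functional $x\mapsto\langle x,A^*y\rangle$ is $\sigma(\X,\V)$-continuous, whence $\A$ is weakly continuous and its adjoint is the pointwise transpose.

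For the converse I would assume $\A$ weakly continuous, so that an adjoint $\A^*\colon\Y\to\V$ exists with $\langle\A x,y\rangle=\langle x,\A^*y\rangle$, and then recover $\A^*y$ explicitly. Fixing $y\in\Y$ and writing $v:=\A^*y\in\V$, I would test against $x=1_Be_j$ with $B\in\F$ and $e_j$ the $j$-th coordinate vector; these lie in $L^\infty\subseteq\X$ by decomposability. The transpose identity makes $(Ax)\cdot y=1_B(A^*y)_j$ pointwise, and since $(Ax)\cdot y\in L^1$ (as $Ax\in\U$, $y\in\Y$), the choice $B=\Omega$ already yields $(A^*y)_j\in L^1$ for each $j$. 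The adjoint relation now reads $E[1_B(A^*y)_j]=E[1_Bv_j]$ for all $B\in\F$, forcing $v_j=(A^*y)_j$ almost surely; hence $A^*y=v\in\V$ and the adjoint is given pointwise by $A^*$.

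Finally, when $\V$ is the K\"othe dual of $\X$, the same identity gives $x\cdot(A^*y)=(Ax)\cdot y\in L^1$ for every $x\in\X$, so $A^*y$ satisfies the defining condition of the K\"othe dual and therefore $A^*y\in\V$; this brings us back to the first case. The only delicate point is the integrability bookkeeping in the converse, namely verifying $A^*y\in L^1$ before concluding from the indicator test, but this is immediate from the $\U$–$\Y$ pairing, so I anticipate no real obstacle.
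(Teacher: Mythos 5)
Your proposal is correct and follows essentially the same route as the paper: both rest on integrating the pointwise transpose identity $(Ax)\cdot y = x\cdot(A^*y)$, whose two sides are in $L^1$ by the respective pairings, to get $\langle \A x,y\rangle = E[x\cdot A^*y]$, and the K\"othe-dual claim is read off from the same identity. The only difference is that you spell out the converse direction (identifying $\A^*y$ with the pointwise $A^*y$ by testing against $1_Be_j\in L^\infty\subseteq\X$), a step the paper's one-line proof leaves implicit; this is a faithful elaboration, not a different argument.
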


\begin{proof}
For any $x\in\X$ and $y\in\Y$,
\[
\langle \A x,y\rangle = E[(Ax)\cdot y] = E[x\cdot A^*y],
\]
which proves the equivalence and the adjoint formula. The above equation implies that $x\cdot A^*y\in L^1$, so $A^*y\in\V$ when $\V$ is the K\"othe dual of $\X$.
\end{proof}

\subsection{Conjugates of integral functionals}

This section studies convex integral functionals on paired decomposable spaces $\U$ and $\Y$ of random variables. More precisely, we take a normal integrand $h$ and study the integral functionals $Eh:\U\to\ereals$ and $Eh^*:\Y\to\ereals$ defined by
\[
Eh(u) := \int_\Omega h(u(\omega),\omega)dP(\omega)
\]
and
\[
Eh^*(y) := \int_\Omega h^*(y(\omega),\omega)dP(\omega).
\]

The following two theorems are essentially reformulations of the main results in \cite{roc68}. We give the simple proofs for completeness.

\begin{theorem}\thlabel{thm:cif0}
If $h$ is a convex normal integrand with $\dom Eh\ne\emptyset$, then  
\[
(Eh)^*=Eh^*.
\]
Moreover, $y\in\partial Eh(u)$ if and only if $Eh(u)$ is finite and $y\in\partial h(u)$ almost surely.
\end{theorem}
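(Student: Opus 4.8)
The plan is to establish the conjugate identity $(Eh)^*=Eh^*$ first and then deduce the subdifferential characterization from the Fenchel--Young equality. For the identity I would prove the two inequalities separately. The inequality $(Eh)^*\le Eh^*$ is the routine one: the pointwise Fenchel--Young inequality gives $u(\omega)\cdot y(\omega)\le h(u(\omega),\omega)+h^*(y(\omega),\omega)$ almost surely, and since $u\cdot y\in L^1$ for $u\in\U$, $y\in\Y$, integration yields $\langle u,y\rangle\le Eh(u)+Eh^*(y)$; the stated integral convention (value $+\infty$ unless the positive part is integrable) makes the right-hand sum unambiguous, so rearranging and taking the supremum over $u\in\U$ gives $(Eh)^*\le Eh^*$.

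The reverse inequality $Eh^*\le(Eh)^*$ is where the real work lies and is the step I expect to be the main obstacle. The idea is to interchange the pointwise supremum defining $h^*$ with the integral, and this is exactly where decomposability of $\U$ enters. Fix $y\in\Y$ and choose $\bar u\in\U$ with $Eh(\bar u)<\infty$, which exists because $\dom Eh\ne\emptyset$. Given any $\alpha<Eh^*(y)$, I would first pick a measurable $\beta$ with $\beta\le h^*(y(\cdot),\cdot)$ and $\int\beta\,dP>\alpha$; then, treating $(a,\omega)\mapsto h(a,\omega)-a\cdot y(\omega)$ as a convex normal integrand whose pointwise infimum is $-h^*(y(\cdot),\cdot)$, measurable selection for normal integrands \cite[Chapter~14]{rw98} produces $w\in L^0(\reals^n)$ with $w(\omega)\cdot y(\omega)-h(w(\omega),\omega)>\beta(\omega)$ almost surely. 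On $A_k:=\{|w|\le k\}$ the truncation $1_{A_k}w$ is bounded, so decomposability yields $u_k:=1_{A_k}w+1_{\Omega\setminus A_k}\bar u\in\U$, and $\langle u_k,y\rangle-Eh(u_k)=\int_{A_k}[w\cdot y-h(w,\cdot)]\,dP+\int_{\Omega\setminus A_k}[\bar u\cdot y-h(\bar u,\cdot)]\,dP$. As $A_k\upto\Omega$ the second integral vanishes, while a monotone-convergence argument applied to the positive and negative parts of $w\cdot y-h(w,\cdot)$ drives the first integral to a value exceeding $\int\beta\,dP>\alpha$; hence $(Eh)^*(y)\ge\alpha$, and letting $\alpha\upto Eh^*(y)$ gives the inequality, the case $Eh^*(y)=+\infty$ being covered by choosing $\beta$ with arbitrarily large integral. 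The delicate points here are the genuine measurability of $w$, which rests on $h$ being a normal integrand, and the control of integrability of $w\cdot y-h(w,\cdot)$ so that the passage $A_k\upto\Omega$ is legitimate under the chosen integral convention; gluing with $\bar u$ off $A_k$ is precisely what keeps the competitor inside $\U$ while keeping the second integral under control.

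With $(Eh)^*=Eh^*$ in hand, the subdifferential claim is routine. By definition $y\in\partial Eh(u)$ is equivalent to the Fenchel equality $Eh(u)+(Eh)^*(y)=\langle u,y\rangle$, which by the first part reads $Eh(u)+Eh^*(y)=\langle u,y\rangle$. This forces $Eh(u)$, and then $Eh^*(y)$, to be finite, and since $u\cdot y\in L^1$ the equality rewrites as $\int[h(u,\cdot)+h^*(y,\cdot)-u\cdot y]\,dP=0$ with a nonnegative integrand. Hence the integrand vanishes almost surely, i.e.\ equality holds pointwise in Fenchel--Young, i.e.\ $y(\omega)\in\partial h(u(\omega),\omega)$ almost surely. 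Conversely, if $Eh(u)$ is finite and $y\in\partial h(u)$ almost surely, the pointwise equality integrates back to $Eh(u)+Eh^*(y)=\langle u,y\rangle$, which is the Fenchel equality and therefore gives $y\in\partial Eh(u)$.
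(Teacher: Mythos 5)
Your proposal is correct in substance, and your second half (the subdifferential characterization via Fenchel--Young equality and the vanishing of a nonnegative integrand with zero integral) is exactly the paper's argument. The real difference is in the conjugacy formula: the paper's entire proof of $(Eh)^*=Eh^*$ is a one-line application of the interchange rule \cite[Theorem~14.60]{rw98} to the normal integrand $h_y(u,\omega):=h(u,\omega)-u\cdot y(\omega)$, whose hypotheses ($\U$ decomposable, $Eh_y\not\equiv+\infty$ on $\U$ because $\dom Eh\ne\emptyset$ and $u\cdot y\in L^1$) are precisely the standing assumptions. What you do instead is re-prove that interchange rule in this special case: measurable selection from a level set of a normal integrand, gluing the selection with a fixed $\bar u\in\dom Eh$ via decomposability, truncation, and passage to the limit. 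The two routes are the same mathematics at different levels of encapsulation: yours is self-contained and makes visible exactly where decomposability of $\U$ and normality of $h$ enter; the paper's is shorter and delegates the measure-theoretic care to the cited theorem.

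Two details of your construction need more care, though both are routine. First, for the measurable selection to exist you need $\beta$ \emph{strictly} below $h^*(y(\cdot),\cdot)$ almost surely (the supremum defining $h^*$ need not be attained), and you need $\beta$ real-valued with $\beta^-\in L^1$ so that $\int_{A_k}\beta\,dP\to\int\beta\,dP$ is legitimate. Both are available: $\dom Eh\ne\emptyset$ forces $\dom h(\cdot,\omega)\ne\emptyset$ a.s., hence $h^*(y(\cdot),\cdot)>-\infty$ a.s.\ and $h^*(y(\cdot),\cdot)\ge\bar u\cdot y-h(\bar u,\cdot)$, whose negative part is integrable since $\bar u\cdot y\in L^1$ and $h(\bar u,\cdot)^+\in L^1$; so $\beta_k:=\min\{h^*(y(\cdot),\cdot),k\}-1/k$ does the job, with $E\beta_k\upto Eh^*(y)$ by monotone convergence. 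Second, your step ``the second integral vanishes'' is a dominated convergence argument and requires $h(\bar u,\cdot)\in L^1$, i.e.\ $Eh(\bar u)$ finite, whereas $\dom Eh\ne\emptyset$ only guarantees $Eh(\bar u)<+\infty$. The case where every $\bar u\in\dom Eh$ has $Eh(\bar u)=-\infty$ must be split off, but it is trivial: then $(Eh)^*(y)\ge\langle\bar u,y\rangle-Eh(\bar u)=+\infty$, and since $h^*(y)\ge\bar u\cdot y-h(\bar u)$ pointwise with $E[h(\bar u)^-]=+\infty$, the paper's integral convention gives $Eh^*(y)=+\infty$ as well, so both sides agree.
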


\begin{proof}
The first claim follows by applying \cite[Theorem~14.60]{rw98} to the normal integrand $h_y(u,\omega):=h(u,\omega)-u\cdot y(\omega)$, where $y\in\Y$. As to the second, we have $y\in\partial Eh(u)$ if and only if $Eh(u)$ is finite and $Eh(u)+(Eh)^*(y)=\langle u,y\rangle$. Since $(Eh)^*=Eh^*$ by the first part,  the equality holds, by Fenchel's inequality, if and only if
\[
h(u)+h^*(y)=u\cdot y
\]
almost surely. This means that $y\in\partial h(u)$ almost surely.
\end{proof}

\begin{corollary}\thlabel{cor:cif}
Let $h$ be a convex normal integrand. The following are equivalent
\begin{enumerate}
\item $\dom Eh \ne\emptyset$ and  $\dom E h^* \ne\emptyset$,
\item $Eh$ is proper and closed,
\item $\dom Eh\ne\emptyset$ and there exists $y\in\Y$ and $\alpha\in L^1$ such that
\begin{align*}
h(u,\omega)\ge u\cdot y(\omega)-\alpha(\omega)
\end{align*}
\end{enumerate}
and imply that  $Eh$ and $E h^*$ are conjugates of each other and that $y\in\partial Eh(u)$ if and only if $y\in\partial h(u)$ almost surely.
\end{corollary}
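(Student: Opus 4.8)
The plan is to prove the equivalence by the cyclic chain $(1)\Rightarrow(2)\Rightarrow(3)\Rightarrow(1)$, using \thref{thm:cif0} as the workhorse and exploiting that the conjugate $h^*$ of a convex normal integrand is again a convex normal integrand with $h^{**}=h$; thus \thref{thm:cif0} applies verbatim to $h^*$ on the reversed pair $(\Y,\U)$, giving $(Eh^*)^*=E(h^*)^*=Eh$ whenever $\dom Eh^*\ne\emptyset$. This symmetric use of \thref{thm:cif0} for both $h$ and $h^*$ is what drives every step.

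For $(1)\Rightarrow(2)$: since $\dom Eh\ne\emptyset$, \thref{thm:cif0} yields $(Eh)^*=Eh^*$; choosing $y_0\in\dom Eh^*$, Fenchel's inequality $Eh(u)\ge\langle u,y_0\rangle-Eh^*(y_0)$ exhibits a continuous affine minorant, so $Eh>-\infty$ everywhere and, with $\dom Eh\ne\emptyset$, $Eh$ is proper. For closedness I would apply \thref{thm:cif0} to $h^*$: as $\dom Eh^*\ne\emptyset$, we get $(Eh^*)^*=Eh$, hence $(Eh)^{**}=((Eh)^*)^*=(Eh^*)^*=Eh$, and a proper convex function equal to its biconjugate is closed.

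For $(2)\Rightarrow(3)$: properness and closedness give $\dom Eh^*=\dom(Eh)^*\ne\emptyset$; fix $y\in\dom Eh^*$ and set $\alpha:=h^*(y(\cdot),\cdot)$, so that the pointwise Fenchel inequality is precisely $h(u,\omega)\ge u\cdot y(\omega)-\alpha(\omega)$. The only real content is $\alpha\in L^1$: the integral $Eh^*(y)<\infty$ controls $\alpha^+$, while Fenchel's inequality $\alpha\ge u_0\cdot y-h(u_0,\cdot)$ for any fixed $u_0\in\dom Eh$ controls $\alpha^-$, since $u_0\cdot y\in L^1$ and $h(u_0,\cdot)^+$ is integrable because $Eh(u_0)<\infty$. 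For $(3)\Rightarrow(1)$: taking the pointwise supremum over $u$ in the inequality of (3) gives $h^*(y(\cdot),\cdot)\le\alpha$, so $Eh^*(y)\le E\alpha<\infty$ and both domains are nonempty.

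Finally, with $\dom Eh\ne\emptyset$ and $\dom Eh^*\ne\emptyset$ I invoke \thref{thm:cif0} for $h$ and for $h^*$ to get $(Eh)^*=Eh^*$ and $(Eh^*)^*=Eh$, i.e.\ mutual conjugacy. The subdifferential equivalence is where I expect the genuine work: \thref{thm:cif0} already gives $y\in\partial Eh(u)$ iff $Eh(u)$ is finite and $y\in\partial h(u)$ almost surely, so it remains to show that $y\in\partial h(u)$ a.s.\ (with $u\in\U$, $y\in\Y$) forces $Eh(u)$ finite. Here $Eh(u)>-\infty$ by properness, and the upper bound comes from testing the subgradient inequality at a fixed $u_0\in\dom Eh$, namely $h(u,\cdot)\le h(u_0,\cdot)+y\cdot(u-u_0)$, whose positive part is integrable since $h(u_0,\cdot)^+\in L^1$ and $y\cdot(u-u_0)\in L^1$. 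Keeping track of these one-sided integrability estimates is the main obstacle; the rest is bookkeeping around the conjugacy and biconjugation identities.
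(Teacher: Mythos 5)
Your proof is correct and follows essentially the same route as the paper: the same cycle $1\Rightarrow 2\Rightarrow 3\Rightarrow 1$ driven by \thref{thm:cif0} applied to both $h$ and $h^*$ (using that $h^*$ is again a convex normal integrand with $h^{**}=h$), with the affine-minorant argument for $2\Rightarrow 3$ and the pointwise supremum for $3\Rightarrow 1$. If anything, you are slightly more explicit than the paper in two spots it glosses over: the verification that $\alpha=h^*(y(\cdot),\cdot)\in L^1$ via the two one-sided Fenchel bounds, and the finiteness of $Eh(u)$ when $y\in\partial h(u)$ a.s.\ (the paper instead integrates the pointwise Fenchel equality, using properness of both $Eh$ and $Eh^*$ to make each summand integrable --- a minor variant of the same idea).
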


\begin{proof}
By \thref{thm:cif0}, 1 implies 2. Assuming 2, there exists $u\in\dom Eh$, $y\in\Y$ and $a\in\reals$ such that
\[
Eh(u)\ge \langle u,y\rangle-a\quad\forall u\in\U
\]
Thus, by \thref{thm:cif0},
\[
a \ge (Eh)^*(y)=Eh^*(y).
\]
By Fenchel's inequality
\[
h(u,\omega)+h^*(y,\omega)\ge u\cdot y,
\]
so 3 holds with $\alpha(\omega)=h^*(y(\omega),\omega)$. If 3 holds, $Eh^*(y)\le E\alpha$, so 1 holds. By \thref{thm:cif0}, $Eh$ and $Eh^*$ are conjugates of each other and $y\in\partial Eh(u)$ implies $y\in\partial h(u)$ almost surely. If $y\in\partial h(u)$, then $h(u)+h^*(y)=u\cdot y$ almost surely, where each summand is integrable, since $Eh$ and $Eh^*$ are proper, so $Eh(u)+Eh^*(y)=\langle u,y\rangle$, which means that $y\in\partial Eh(u)$.
\end{proof}

\begin{corollary}\thlabel{cor:scl}
Given a closed convex valued measurable mapping $S:\Omega\tos\reals^m$, the set
\[
\S := \{u\in\U\mid u\in S\ \text{a.s.}\}
\]
is closed and convex.
\end{corollary}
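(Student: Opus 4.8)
The plan is to exhibit $\S$ as the domain of a single integral functional and then obtain both properties from \thref{cor:cif}. Let $h$ be the indicator integrand of $S$, i.e.\
\[
h(u,\omega):=\begin{cases}0 & \text{if }u\in S(\omega),\\ +\infty & \text{otherwise.}\end{cases}
\]
Because each $S(\omega)$ is closed and convex, $h(\cdot,\omega)$ is a closed convex function, and because $S$ is measurable its epigraph $\epi h(\cdot,\omega)=S(\omega)\times[0,\infty)$ is measurable in $\omega$; hence $h$ is a convex normal integrand. Using the integration convention of the paper, $Eh(u)=0$ when $u\in S$ almost surely and $Eh(u)=+\infty$ otherwise, so $Eh$ is precisely the indicator function of $\S$ and $\dom Eh=\S$.

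Convexity of $\S$ is then immediate, either from convexity of the integral functional $Eh$ or directly from convexity of the sets $S(\omega)$. For closedness I would argue as follows. If $\S=\emptyset$ there is nothing to prove, so assume $\S\ne\emptyset$; then $\dom Eh\ne\emptyset$. Moreover $\S\ne\emptyset$ forces $S(\omega)\ne\emptyset$ for almost every $\omega$ (otherwise no $u$ could satisfy $u\in S$ a.s.), so the support function $h^*(\cdot,\omega)$ of $S(\omega)$ satisfies $h^*(0,\omega)=0$ a.s., giving $Eh^*(0)=0$ and hence $\dom Eh^*\ne\emptyset$. With both domains nonempty, the implication $1\Rightarrow 2$ of \thref{cor:cif} shows that $Eh$ is proper and closed. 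Since $Eh$ is the indicator of $\S$, and an indicator is lower semicontinuous exactly when its set is closed, $\S$ is closed.

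The only nontrivial inputs are the two nonemptiness conditions required by \thref{cor:cif}; the delicate one is $\dom Eh^*\ne\emptyset$, and the observation that makes it work is that a nonempty $\S$ forces $S$ to be almost surely nonempty, so that $y=0$ already belongs to $\dom Eh^*$. A self-contained alternative for closedness, bypassing conjugates altogether, is to note that $\S$ is sequentially closed in $L^0$: an a.s.-convergent subsequence of a sequence in $\S$ has its limit in $S(\omega)$ for a.e.\ $\omega$ by closedness of $S(\omega)$. Since, by \thref{lem:rel}, the $L^0$-topology on $\U$ is weaker than $\tau(\U,\Y)$, this $L^0$-closed convex set is Mackey-closed and therefore weakly closed.
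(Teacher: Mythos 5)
Your proposal is correct. It runs through the same machinery as the paper (the conjugacy results of Section 2.2 applied to the indicator integrand $\delta_S$), but the execution differs in an instructive way: the paper's one-line proof applies \thref{thm:cif0} to the \emph{conjugate} $h^*=\sigma_S$, so that $E\delta_S=(E\sigma_S)^*$ is closed automatically, being a conjugate function; this needs only $\dom E\sigma_S\ne\emptyset$, which holds at $y=0$ regardless of whether $\S$ is empty, so no case distinction and no properness argument are required. Your route through \thref{cor:cif} applied to $h=\delta_S$ itself instead needs both $\dom Eh\ne\emptyset$ and $\dom Eh^*\ne\emptyset$, which forces the split into the cases $\S=\emptyset$ and $\S\ne\emptyset$ and the (correct) observation that $\S\ne\emptyset$ makes $S$ almost surely nonempty, whence $Eh^*(0)=0$. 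Both arguments are valid; the paper's is shorter because biconjugates are closed for free, while yours buys the stronger conclusion that $E\delta_S$ is \emph{proper} in the nonempty case. Your second, self-contained argument is genuinely different and arguably more elementary: $\S$ is sequentially closed in $L^0$ by pointwise closedness of the sets $S(\omega)$, hence $\tau(\U,\Y)$-closed by \thref{lem:rel}, hence $\sigma(\U,\Y)$-closed by convexity. This is precisely the technique the paper itself uses to prove \thref{lem:Xaclosed} and \thref{lem:vperp}, so it is fully consistent with the paper's toolkit and avoids conjugation altogether.
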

\begin{proof}
This follows by applying \thref{thm:cif0} to the conjugate of $h(u,\omega):=\delta_S(u,\omega)$.
\end{proof}

By symmetry, one can add obvious dual versions of 2 and 3 in the list of equivalent conditions in \thref{cor:cif}. The following gives a general form of the classical Jensen's inequality for conditional expectations.

\begin{theorem}[Jensen's inequality]\thlabel{thm:jensen}
Assume that $\U$ and $\Y$ are solid with $E^\G\U\subset\U$ and $E^\G\Y\subset\Y$ and let $h$ be a $\G$-measurable convex normal integrand such that $Eh^*$ is proper on $\Y$. Then
\[
Eh(E^\G u)\le Eh(u)
\]
for every $u\in\U$.
\end{theorem}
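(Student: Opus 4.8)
The engine of the proof is the scenariowise (conditional) Jensen inequality for a $\G$-measurable convex normal integrand; the functional statement then follows by integration. I would set things up through duality so that all the stated hypotheses are used. Since $Eh^*$ is proper, $\dom Eh^*\neq\emptyset$, so \thref{thm:cif0} applied to the normal integrand $h^*$ (with $h^{**}=h$) gives $Eh=(Eh^*)^*$; together with \thref{cor:cif} this shows that $Eh$ and $Eh^*$ are proper closed mutually conjugate functionals and that $h(\cdot,\omega)$ and $h^*(\cdot,\omega)$ are proper closed convex for almost every $\omega$. Because $E^\G\U\subset\U$ and $E^\G\Y\subset\Y$, \thref{lem:EGcont3} makes $E^\G$ weakly continuous and self-adjoint, i.e.\ $\langle E^\G u,y\rangle=\langle u,E^\G y\rangle$. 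Using the conjugacy $Eh=(Eh^*)^*$ and this identity,
\[
Eh(E^\G u)=\sup_{y\in\Y}\{\langle E^\G u,y\rangle-Eh^*(y)\}=\sup_{y\in\Y}\{\langle u,E^\G y\rangle-Eh^*(y)\}.
\]
If I can show $Eh^*(E^\G y)\le Eh^*(y)$ for every $y\in\Y$, then, since $E^\G y\in\Y$, each term on the right is bounded by $\langle u,E^\G y\rangle-Eh^*(E^\G y)\le\sup_{y'\in\Y}\{\langle u,y'\rangle-Eh^*(y')\}=Eh(u)$, and taking the supremum over $y$ yields $Eh(E^\G u)\le Eh(u)$. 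Thus the whole claim reduces to the same Jensen inequality for the conjugate integrand $h^*$.

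To prove $Eh^*(E^\G y)\le Eh^*(y)$ I would argue pointwise and then integrate. Since $Eh$ is proper, \thref{cor:cif} applied to $h^*$ furnishes $u_0\in\U$ and $\alpha\in L^1$ with $h^*(z,\omega)\ge z\cdot u_0(\omega)-\alpha(\omega)$; hence $h^*(y,\cdot)$ is bounded below by the integrable function $y\cdot u_0-\alpha$, so $E^\G[h^*(y,\cdot)]$ is well defined and, by the tower property (\thref{lem:ce}), $E[E^\G h^*(y,\cdot)]=Eh^*(y)$ in $(-\infty,+\infty]$. As $h$ is $\G$-measurable, so is $h^*$, and I would represent it as a countable supremum of $\G$-measurable affine functions, $h^*(z,\omega)=\sup_k\{z\cdot w_k(\omega)-\gamma_k(\omega)\}$ with $w_k,\gamma_k$ $\G$-measurable (see \cite[Ch.~14]{rw98}). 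For each $k$ the minorant gives $y\cdot w_k-\gamma_k\le h^*(y,\cdot)$ a.s.; applying $E^\G$ and pulling the $\G$-measurable factors out (\thref{lem:ce}) yields $(E^\G y)\cdot w_k-\gamma_k=E^\G[y\cdot w_k-\gamma_k]\le E^\G[h^*(y,\cdot)]$ a.s. Taking the supremum over the countable index set and using the representation at $z=E^\G y(\omega)$ gives $h^*(E^\G y,\omega)\le E^\G[h^*(y,\cdot)](\omega)$ almost surely, and integrating with the tower property gives $Eh^*(E^\G y)\le Eh^*(y)$.

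The one genuinely delicate point is the pull-out identity $E^\G[y\cdot w_k]=(E^\G y)\cdot w_k$: the coefficients $w_k$ produced by the abstract representation need not lie in the space paired with $\Y$, so $y\cdot w_k$ need not be integrable. I expect this to be the main obstacle and would handle it as in the proof of \thref{lem:EGcont3}, reducing to bounded $\G$-measurable factors by truncating $w_k,\gamma_k$ on $\G$-measurable sets (where \thref{lem:ce} applies verbatim) and passing to the limit, with the integrable minorant $y\cdot u_0-\alpha$ controlling the negative parts throughout so that the monotone behaviour of conditional expectation and the sign convention for integrals fixed in the introduction remain valid. Everything else is routine monotonicity of $E^\G$.
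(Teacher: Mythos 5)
Your argument is sound in substance but takes a genuinely different route from the paper's. The paper works over the restricted pairing of $\G$-measurable elements, $(\U\cap L^0(\G),\Y\cap L^0(\G))$: by \thref{cor:cif} on that pairing, $Eh(E^\G u)=\sup_{y\in\Y\cap L^0(\G)}\{E[(E^\G u)\cdot y]-Eh^*(y)\}$; the tower property replaces $E[(E^\G u)\cdot y]$ by $E[u\cdot y]$ (costless, since such $y$ are already $\G$-measurable), and the elementary bound $\sup_y E[\,\cdot\,]\le E[\sup_y(\,\cdot\,)]$ together with $h^{**}=h$ finishes; a bootstrap applying the same argument to the integrand $(h^*)^+$, with the roles of $\U$ and $\Y$ swapped, produces a point of $\dom Eh^*\cap L^0(\G)$ so that the first step is applicable under the stated hypothesis. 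You instead keep the full pairing, use self-adjointness of $E^\G$ (\thref{lem:EGcont3}) to move the conditional expectation onto the dual variable, reduce the claim to Jensen's inequality for $Eh^*$, and prove the latter in the stronger scenariowise form $h^*(E^\G y,\cdot)\le E^\G[h^*(y,\cdot)]$ via a countable $\G$-measurable affine representation (a Castaing representation of $\epi h$) plus truncation. Your truncation sketch does close: with $A_{k,N}:=\{|w_k|\le N,\ |\gamma_k|\le N\}\in\G$, multiply the minorant inequality by $1_{A_{k,N}}$, take $E^\G$ (the left side is integrable, the right side has integrable negative part), use $E^\G[1_A\xi]=1_A E^\G\xi$ for $A\in\G$, let $N\uparrow\infty$, and take the countable supremum. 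Your route yields the conditional (pointwise) Jensen inequality as a byproduct and avoids the restricted pairing; the paper's route avoids measurable-selection arguments entirely, at the cost of the $(h^*)^+$ bootstrap.

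One assertion is false as stated and needs a one-line repair: \thref{cor:cif} does not show that $Eh$ is proper, because the hypotheses do not imply $\dom Eh\cap\U\ne\emptyset$. For example, with $m=1$, $\U=L^4$, $\Y=L^2$, $\G=\F$ and $h(u,\omega)=\delta_{\{c(\omega)\}}(u)$ for some $c\in L^2\setminus L^4$, the functional $Eh^*(y)=E[cy]$ is finite on all of $\Y$ (hence proper), while $Eh\equiv+\infty$ on $\U$. In that situation the minorant $h^*(z,\omega)\ge z\cdot u_0(\omega)-\alpha(\omega)$ with $u_0\in\U$, $\alpha\in L^1$ that you invoke cannot exist either, since (given that $h(\cdot,\omega)$ is a.s.\ proper and closed) its existence is equivalent to $\dom Eh\cap\U\ne\emptyset$. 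The patch is immediate: either dispose of the case $\dom Eh\cap\U=\emptyset$ at the outset, where both sides of the claimed inequality are $+\infty$ because $E^\G u\in\U$; or observe that your reduction only needs $Eh^*(E^\G y)\le Eh^*(y)$ for $y\in\dom Eh^*$, and for such $y$ properness of $Eh^*$ already gives $h^*(y,\cdot)\in L^1$, so $E^\G[h^*(y,\cdot)]$ is well defined and the truncation argument runs without any uniform minorant. With either patch, your proof is complete.
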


\begin{proof}
Assume first that $Eh^*$ is proper on $\Y\cap L^0(\G)$. \thref{cor:cif} then gives
\begin{align*}
Eh(E^\G u) &=\sup_{y\in\Y\cap L^0(\G)}\{E[(E^\G u)\cdot y]-Eh^*(y)\}\\
&= \sup_{y\in\Y\cap L^0(\G)}E[u\cdot y-h^*(y)]\\
&\le E\sup_{y\in\reals^m}\{u\cdot y - h^*(y)\}\\
&=Eh(u)
\end{align*}
for any $u\in\U$. If $Eh^*$ is proper merely on $\Y$, then $E(h^*)^+$ is proper on $\Y$ as well. The function $E[(h^*)^+]^*$ is finite at the origin so, by the first part of the proof, $E(h^*)^+(E^\G y)\le E(h^*)^+(y)$, so $Eh^*$ is finite on $\Y\cap L^0(\G)$. 
\end{proof}

\section{Duality for integrable strategies}\label{sec:dual}

We will develop a duality theory for \eqref{sp} by applying the general conjugate duality framework of Rockafellar \cite{roc74} first to the parametric optimization problem
\begin{equation}\label{spx}\tag{$SP_\X$}
  \begin{aligned}
    &\minimize\quad & & Ef(x,\bar u):=\int f(x(\omega),\bar u(\omega),\omega)dP(\omega)\quad\ovr\text{$x\in\X_a$}
  \end{aligned}
\end{equation}
where $\X\subset L^0(\Omega,\F,P;\reals^n)$ is a solid decomposable space of random paths and
\[
\X_a:=\X\cap\N.
\]
We will assume that the parameter $\bar u$ belongs to another solid decomposable space $\U\subset L^0(\Omega,\F,P;\reals^m)$ of random variables. The general theory of convex duality will give a dual problem and optimality conditions for \eqref{spx}. Section~\ref{sec:rel} will then extend these results to the original problem \eqref{sp} where we optimize over general adapted strategies in $L^0$.

We embed \eqref{spx} into the conjugate duality framework by introducing an additional parameter $z\in\X$ and the extended real-valued convex function $F$ on $\X\times\X\times\U$ defined by
\[
F(x,z,u) := Ef(x,u) + \delta_\N(x-z).
\]
We denote the associated optimum value function by
\[
\varphi(z,u):=\inf_{x\in\X}\{Ef(x,u)\,|\,x-z\in\N\}.
\]
We assume that $\X$ is in separating duality with a solid decomposable space $\V\subset L^0(\Omega,\F,P;\reals^n)$ and that $\U$ is in separating duality with a solid decomposable space $\Y\subset L^0(\Omega,\F,P;\reals^m)$. The bilinear forms are the usual ones, i.e.
\[
\langle x,v\rangle:=E[x\cdot v]\quad\text{and}\quad \langle u,y\rangle := E[u\cdot y].
\]
Solidity implies that
\[
\X = \X_0\times\cdots\times\X_T\quad\text{and}\quad \V = \V_0\times\cdots\times\V_T,
\]
where $\X_t$ and $\V_t$ are solid decomposable spaces of $\reals^{n_t}$-valued random variables in separating duality under the bilinear form $(x_t,v_t)\mapsto E[x_t\cdot v_t]$. It follows that
\[
\langle x,v\rangle = \sum_{t=0}^TE[x_t\cdot v_t]\quad\forall x\in\X,\ v\in\V
\]
and
\[
\X_a = \X_0(\F_0)\times\cdots\times\X_T(\F_T).
\]

According to the general conjugate duality framework of \cite{roc74}, the {\em dual problem} is the concave maximization problem
\begin{equation}\tag{$D$}\label{d}
\maximize\quad \langle\bar  u,y\rangle - F^*(0,p,y)\quad\ovr (p,y)\in\V\times\Y.
\end{equation}
More explicit forms will be given below. By definition, $\varphi^*(p,y)=F^*(0,p,y)$, so the dual problem can be written as
\[
\maximize\quad \langle\bar  u,y\rangle - \varphi^*(p,y)\quad\ovr (p,y)\in\V\times\Y.
\]
By Fenchel's inequality,
\[
F(x,0,u) \ge \langle u,y\rangle-F^*(0,p,y)\quad\forall x\in\X,u\in\U,p\in\V,y\in\Y.
\]
Denoting the optimal values of primal and dual problem, respectively, as $\inf$\eqref{spx} and $\sup$\eqref{d}, we thus have
\[
\inf\eqref{spx} \ge\sup\eqref{d}
\]
A {\em duality gap} is said to exist if the inequality is strict. Conversely, we say that there is no duality gap if the above holds as an equality.

The associated {\em Lagrangian} is the convex-concave function $L$ on $\X\times\V\times\Y$ given by
\[
L(x,p,y):=\inf_{(z,u)\in\V\times\U}\{F(x,z,u)-\langle z,p\rangle-\langle u,y\rangle\}.
\]
By definition, the conjugate of $F$ can be expressed as
\[
F^*(v,p,y) = \sup_{x\in\X}\{\langle x,v\rangle - L(x,p,y)\}.
\]
The associated minimax problem is to find a saddle-value and/or a saddle-point of the concave-convex function
\[
L_{\bar u}(x,p,y) := L(x,p,y)+\langle\bar u,y\rangle,
\]
when minimizing over $x$ and maximizing over $(p,y)$. If
\[
\inf_x\sup_{p,y}L_{\bar u}(x,p,y) = \sup_{p,y}\inf_xL_{\bar u}(x,p,y),
\]
the common value is called the {\em minimax} or the {\em saddle-value}, and $(x,p,y)$ is called a {\em saddle-point} if 
\[
L_{\bar u}(x,p',y') \le L_{\bar u}(x,p,y) \le L_{\bar u}(x',p,y) \quad\forall x',p',y'.
\]
Existence of a saddle-point implies the existence of a saddle-value.

Since $\N$ is closed in probability, \thref{lem:rel} gives the following.

\begin{lemma}\thlabel{lem:Xaclosed}
$\X_a$ is $\sigma(\X,\V)$-closed.
\end{lemma}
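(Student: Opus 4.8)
The plan is to exploit the fact that $\X_a$ is a linear subspace of $\X$, hence convex, so that closedness in \emph{any} topology compatible with the duality between $\X$ and $\V$ is equivalent to $\sigma(\X,\V)$-closedness. By the standard consequence of the Hahn--Banach separation theorem (Mazur's lemma), a convex subset of $\X$ that is closed in the Mackey topology $\tau(\X,\V)$ --- indeed, in any locally convex topology compatible with the pairing --- coincides with its $\sigma(\X,\V)$-closure. Thus it suffices to exhibit one convenient topology, no stronger than $\tau(\X,\V)$, in which $\X_a$ is closed.

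The natural candidate is the $L^0$-topology of convergence in probability. First I would verify that $\N$ itself is closed in probability. This reduces to the componentwise claim that, for each $t$, the set of $\F_t$-measurable $\reals^{n_t}$-valued random variables is $L^0$-closed: if $x^k_t\to x_t$ in probability with each $x^k_t$ being $\F_t$-measurable, then a subsequence converges almost surely, and the almost sure limit of $\F_t$-measurable functions is again $\F_t$-measurable. Intersecting over $t=0,\dots,T$ shows that $\N$ is $L^0$-closed, and therefore $\X_a=\X\cap\N$ is closed in the relative $L^0$-topology on $\X$.

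Now I would invoke \thref{lem:rel}, whose final assertion states that the $L^0$-topology on $\X$ is weaker than $\tau(\X,\V)$. Consequently every set closed in the relative $L^0$-topology is also $\tau(\X,\V)$-closed; in particular $\X_a$ is $\tau(\X,\V)$-closed. Combining this with the opening remark --- that $\X_a$ is a linear subspace and $\tau(\X,\V)$ is compatible with the pairing --- yields that $\X_a$ is $\sigma(\X,\V)$-closed, which is the claim.

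I expect the only genuinely delicate point to be the bookkeeping about the direction of the topology comparison: one must use that the $L^0$-topology is the \emph{weaker} one, so that $L^0$-closedness is inherited by the stronger Mackey topology (stronger topologies have more closed sets), and then that for convex sets the descent from $\tau(\X,\V)$-closedness to $\sigma(\X,\V)$-closedness comes for free from compatibility. Everything else --- the almost-sure-subsequence argument for $\N$ and the reduction to a compatible topology --- is routine.
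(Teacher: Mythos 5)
Your proof is correct and is essentially the paper's own argument: the paper likewise notes that $\N$ is closed in probability, applies \thref{lem:rel} (read for the pair $(\X,\V)$) to conclude that $\X_a$ is $\tau(\X,\V)$-closed, and then passes to $\sigma(\X,\V)$-closedness by convexity. Your additional details --- the almost-sure-subsequence argument for the $L^0$-closedness of $\N$ and the explicit appeal to Mazur/compatibility --- are exactly the steps the paper leaves implicit.
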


The following three theorems are restatements of the main duality results in \cite{roc74} in the present setting. They all involve the assumption that the integral functional $Ef$ be closed in $u$. This means that $Ef(x,\cdot)$ is closed in $\U$ for each $x\in\X$. Combined with \thref{lem:Xaclosed}, this implies that the function $F$ is closed in $(z,u)$. 

The following characterizes the absence of duality gap.

\begin{theorem}\thlabel{thm:duality1}
The following are equivalent,
\begin{enumerate}
\item
  $\inf\eqref{spx}=\sup\eqref{d}$, 
\item
  $\varphi$ is closed at $(0,\bar u)$. 
\end{enumerate}
If $Ef$ is closed in $u$, the above are equivalent to
\begin{enumerate}[resume]
\item
  The function $L_{\bar u}$ has a saddle-value.
\end{enumerate}
\end{theorem}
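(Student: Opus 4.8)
The plan is to reduce each of the three conditions to a property of the value function $\varphi$ and its biconjugate, following the conjugate duality scheme of \cite{roc74}. Two identifications drive everything. By definition $\inf\eqref{spx}=\varphi(0,\bar u)$. Since $\varphi^*(p,y)=F^*(0,p,y)$, the dual value is
\[
\sup\eqref{d}=\sup_{(p,y)\in\V\times\Y}\{\langle\bar u,y\rangle-\varphi^*(p,y)\}=\varphi^{**}(0,\bar u).
\]
Because $F$ is jointly convex, its infimal projection $\varphi$ is convex, and $\varphi^{**}$ coincides with the lower semicontinuous hull $\cl\varphi$ whenever the latter is proper.

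Granting that closedness of $\varphi$ at $(0,\bar u)$ means, in the sense of \cite{roc74}, the equality $\varphi(0,\bar u)=\varphi^{**}(0,\bar u)$, the equivalence (1)$\Leftrightarrow$(2) is immediate from the identifications above and requires no extra hypothesis: condition (1) reads $\varphi(0,\bar u)=\varphi^{**}(0,\bar u)$, which is exactly (2). The weak inequality $\inf\eqref{spx}\ge\sup\eqref{d}$ already obtained from Fenchel's inequality is the general relation $\varphi\ge\varphi^{**}$ evaluated at $(0,\bar u)$, so closedness at the point is precisely the absence of a duality gap.

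For the saddle-value condition, set $F_x(z,u):=F(x,z,u)$ and unwind the minimax of $L_{\bar u}$. From $L(x,p,y)=-(F_x)^*(p,y)$ and $F^*(0,p,y)=\sup_x(F_x)^*(p,y)$ one gets $\inf_xL_{\bar u}(x,p,y)=\langle\bar u,y\rangle-F^*(0,p,y)$, hence $\sup_{(p,y)}\inf_xL_{\bar u}=\sup\eqref{d}$ unconditionally. In the other order, $\sup_{(p,y)}L_{\bar u}(x,p,y)=(F_x)^{**}(0,\bar u)$. Here the hypothesis enters: assuming $Ef$ is closed in $u$, and using \thref{lem:Xaclosed} to make $z\mapsto\delta_\N(x-z)$ closed, the partial function $F_x$ is closed and convex in $(z,u)$, so $(F_x)^{**}(0,\bar u)=F(x,0,\bar u)$ and therefore $\inf_x\sup_{(p,y)}L_{\bar u}=\inf\eqref{spx}$. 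A saddle-value exists precisely when the two orders coincide, i.e.\ when $\sup\eqref{d}=\inf\eqref{spx}$, giving (3)$\Leftrightarrow$(1) under the stated assumption.

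The one genuinely load-bearing step is the biconjugate identity $(F_x)^{**}(0,\bar u)=F(x,0,\bar u)$, the sole place where closedness of $Ef$ in $u$ is used; everything else is conjugate bookkeeping. The proper case is the Fenchel--Moreau theorem; the improper cases, where $F_x\equiv+\infty$ or $F_x$ attains $-\infty$, must be checked by hand to confirm that the identity, and with it the resulting saddle-value equivalence, still hold. Verifying that this degenerate bookkeeping is consistent with the conventions of \cite{roc74} used on the $\varphi$ side is the main point requiring attention, rather than any hard analytic estimate.
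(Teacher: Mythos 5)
Your proposal is correct and takes essentially the paper's approach: the paper offers no separate argument, presenting \thref{thm:duality1} as a restatement of the main results of the conjugate duality framework of \cite{roc74}, and your reduction — $\inf\eqref{spx}=\varphi(0,\bar u)$, $\sup\eqref{d}=\varphi^{**}(0,\bar u)$, closedness at $(0,\bar u)$ read as $\varphi(0,\bar u)=\varphi^{**}(0,\bar u)$, and partial biconjugation of $F_x(z,u):=F(x,z,u)$ to identify $\inf_x\sup_{p,y}L_{\bar u}$ with $\inf\eqref{spx}$ — is precisely the argument behind the cited results. The improper cases you flag at the end are settled by that framework's closedness convention (a closed improper convex function is identically $+\infty$ or identically $-\infty$), under which $(F_x)^{**}=F_x$ holds for every closed $F_x$, so the saddle-value equivalence goes through as you describe.
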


The next one characterizes situations where there is no duality gap and, furthermore, the dual admits solutions.

\begin{theorem}\thlabel{thm:duality2}
If $\varphi(0,u)<\infty$, the following are equivalent
\begin{enumerate}
  \item
    $(p,y)$ solves \eqref{d} and $\inf\eqref{spx}=\sup\eqref{d}$.
  \item
    either $\varphi(0,\bar u)=-\infty$ or $(p,y)\in\partial\varphi(0,\bar u)$,
\end{enumerate}
If $Ef$ is closed in $u$, the above are equivalent to
\begin{enumerate}[resume]
\item
  $\underset{x}\inf\,\underset{p,y}\sup\, L_{\bar u}(x,p,y) = \underset{x}\inf\, L_{\bar u}(x,p,y)$.
\end{enumerate}
\end{theorem}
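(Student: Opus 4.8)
The plan is to reduce all three statements to the single Fenchel-type equality
\begin{equation}\tag{$\star$}
\varphi(0,\bar u)=\langle\bar u,y\rangle-\varphi^*(p,y),
\end{equation}
which expresses that the primal optimum $\inf\eqref{spx}=\varphi(0,\bar u)$ equals the value of the dual objective $(p,y)\mapsto\langle\bar u,y\rangle-\varphi^*(p,y)$ at the given point. First I would record the two bookkeeping facts used throughout: since $\varphi^*(p,y)=F^*(0,p,y)$, the dual problem \eqref{d} is $\sup_{p,y}\{\langle\bar u,y\rangle-\varphi^*(p,y)\}=\varphi^{**}(0,\bar u)$, and Fenchel's inequality yields weak duality $\sup\eqref{d}\le\inf\eqref{spx}=\varphi(0,\bar u)$.

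I would then show that Statement~1 is equivalent to $(\star)$. If $(\star)$ holds, the dual objective at $(p,y)$ already equals $\varphi(0,\bar u)$, and weak duality squeezes this value between the dual objective and $\inf\eqref{spx}$, forcing all three to coincide; hence $(p,y)$ attains $\sup\eqref{d}$ and there is no duality gap, which is Statement~1. Conversely, Statement~1 says that $(p,y)$ attains $\sup\eqref{d}$ and that $\sup\eqref{d}=\inf\eqref{spx}$, which is exactly $(\star)$. The standing hypothesis $\varphi(0,\bar u)<\infty$ enters here to exclude the degenerate value $+\infty$ from the squeezing argument.

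Next I would establish Statement~1~$\Leftrightarrow$~Statement~2 by splitting on the value of $\varphi(0,\bar u)$. If $\varphi(0,\bar u)=-\infty$, weak duality forces $\sup\eqref{d}=-\infty$ as well, so every $(p,y)$ attains the dual supremum with no gap; thus both statements hold for all $(p,y)$ and are equivalent. If $-\infty<\varphi(0,\bar u)<\infty$, then $\langle\bar u,y\rangle$ is finite and $(\star)$ rearranges to the subgradient equality $\varphi(0,\bar u)+\varphi^*(p,y)=\langle\bar u,y\rangle$, which by definition means $(p,y)\in\partial\varphi(0,\bar u)$. Combining the two cases yields Statement~2.

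Finally, under the hypothesis that $Ef$ is closed in $u$, I would prove Statement~1~$\Leftrightarrow$~Statement~3. The identity $F^*(0,p,y)=\sup_x\{-L(x,p,y)\}$ gives $\inf_x L_{\bar u}(x,p,y)=\langle\bar u,y\rangle-\varphi^*(p,y)$, the dual objective at $(p,y)$, and requires no closedness. For the left-hand side I would identify $\sup_{p,y}L_{\bar u}(x,p,y)$ with the biconjugate $(F(x,\cdot,\cdot))^{**}(0,\bar u)$ in the perturbation variables and invoke the closedness of $F$ in $(z,u)$ — which, as noted after \thref{lem:Xaclosed}, follows from $Ef$ being closed in $u$ — to collapse it to $F(x,0,\bar u)$; taking $\inf_x$ then gives $\inf_x\sup_{p,y}L_{\bar u}(x,p,y)=\varphi(0,\bar u)=\inf\eqref{spx}$, so Statement~3 asserts precisely $(\star)$. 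The hard part is exactly this closedness step: one must ensure that the biconjugate truly collapses to $F(x,0,\bar u)$ for every $x$, including those for which $F(x,\cdot,\cdot)$ could a priori be improper, which is what ``$Ef$ closed in $u$'' together with the closedness of $\X_a$ is designed to guarantee; the rest is care with extended arithmetic, so that the weak-duality squeeze and the $-\infty$ case never produce an indeterminate $-\infty+\infty$.
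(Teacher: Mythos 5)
Your proof is correct. There is, however, no in-paper proof to compare it against: the paper explicitly presents \thref{thm:duality1,thm:duality2,thm:duality3} as restatements of the main duality results of \cite{roc74}, so the "paper's proof" is a citation. Your argument is a faithful, self-contained reconstruction of exactly the route that reference takes: the dual optimum value is $\varphi^{**}(0,\bar u)$, so statement 1 amounts to the Fenchel equality $\varphi(0,\bar u)+\varphi^*(p,y)=\langle\bar u,y\rangle$; statement 2 is that same equality read as a subgradient relation when $\varphi(0,\bar u)$ is finite, with the case $\varphi(0,\bar u)=-\infty$ disposed of by weak duality (every $(p,y)$ is then trivially optimal with no gap); and statement 3 reduces to the same equality via the two identities $\inf_x L_{\bar u}(x,p,y)=\langle\bar u,y\rangle-\varphi^*(p,y)$ (which needs no closedness, since $F^*(0,p,y)=\sup_x\{-L(x,p,y)\}$ holds by definition) and $\sup_{p,y}L_{\bar u}(x,p,y)=\bigl(F(x,\cdot,\cdot)\bigr)^{**}(0,\bar u)=F(x,0,\bar u)$, where the closedness of $F$ in $(z,u)$ enters. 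You were also right to isolate the genuinely delicate point: the biconjugate collapse must hold for every $x$, including those where $F(x,\cdot,\cdot)$ is improper, and this is precisely what the convention behind ``closed'' (lower semicontinuous and proper, or identically $\pm\infty$), supplied here by ``$Ef$ closed in $u$'' together with \thref{lem:Xaclosed}, is designed to guarantee. The only cosmetic remark is that your squeezing discussion of the standing hypothesis $\varphi(0,\bar u)<\infty$ could be sharpened: its real role is to rule out the situation where $\varphi(0,\bar u)=+\infty$ with $\varphi\not\equiv+\infty$, in which statements 1 and 2 can behave pathologically; but since you never need that degenerate case, this does not affect the validity of your proof.
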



The following characterizes the situations where both primal and dual solutions exist and there is no duality gap.

\begin{theorem}\thlabel{thm:duality3}
The following are equivalent,
\begin{enumerate}
\item
  $x$ solves \eqref{spx}, $(p,y)$ solves \eqref{d} and $\inf\eqref{spx}=\sup\eqref{d}\in\reals$,
\item
  $(0,p,y)\in\partial F(x,0,\bar u)$.
\end{enumerate}
If $Ef$ is closed in $u$, the above are equivalent to
\begin{enumerate}[resume]
\item $0\in\partial_xL(x,p,y)$ and $(0,\bar u)\in\partial_{(p,y)}[-L](x,p,y)$.
\end{enumerate}
\end{theorem}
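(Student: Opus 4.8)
The plan is to obtain all three characterizations from the general conjugate duality of \cite{roc74}, using Fenchel's equality to read off the subgradient condition and the fact that $-L(x,\cdot,\cdot)$ is, by construction, the conjugate of $(z,u)\mapsto F(x,z,u)$. For the equivalence of 1 and 2, I would first note that $\varphi^*(p,y)=F^*(0,p,y)$, that a point $x$ with $(0,p,y)\in\partial F(x,0,\bar u)$ automatically lies in $\X_a$ with $F(x,0,\bar u)=Ef(x,\bar u)$, and that the dual objective at $(p,y)$ is $\langle\bar u,y\rangle-F^*(0,p,y)$. Since the pairing gives $\langle(x,0,\bar u),(0,p,y)\rangle=\langle\bar u,y\rangle$, Fenchel's equality shows that $(0,p,y)\in\partial F(x,0,\bar u)$ is equivalent to
\[
F(x,0,\bar u)+F^*(0,p,y)=\langle\bar u,y\rangle,
\]
i.e.\ to the primal value at $x$ equalling the dual value at $(p,y)$. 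Inserting this into the weak-duality sandwich $Ef(x,\bar u)\ge\inf\eqref{spx}\ge\sup\eqref{d}\ge\langle\bar u,y\rangle-F^*(0,p,y)$ collapses it to a chain of equalities with a finite common value (properness rules out the $\infty-\infty$ case), which is exactly statement 1; the converse is the same computation read backwards.

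For 2 $\Leftrightarrow$ 3 I would decompose the single subgradient inclusion into the two partial conditions through the Lagrangian. Writing out $(0,p,y)\in\partial F(x,0,\bar u)$ as
\[
F(x',z',u')\ge F(x,0,\bar u)+\langle z',p\rangle+\langle u'-\bar u,y\rangle\quad\forall(x',z',u')
\]
and first fixing $x'=x$, then taking the infimum over $(z',u')$, shows that the infimum defining $L(x,p,y)$ is attained at $(z,u)=(0,\bar u)$, i.e.\ $L(x,p,y)=F(x,0,\bar u)-\langle\bar u,y\rangle$; keeping $x'$ free and again taking the infimum over $(z',u')$ gives $L(x',p,y)\ge L(x,p,y)$ for all $x'$, that is $0\in\partial_xL(x,p,y)$. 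The attained-infimum equality is precisely Fenchel's equality for $(z,u)\mapsto F(x,z,u)$ and its conjugate $-L(x,\cdot,\cdot)$, which, read on the conjugate side, is the statement $(0,\bar u)\in\partial_{(p,y)}[-L](x,p,y)$. Conversely, the two partial conditions reassemble into the full inclusion by substituting the attained value of $L(x,p,y)$ back into $L(x',p,y)\ge L(x,p,y)$ and using $L(x',p,y)\le F(x',z',u')-\langle z',p\rangle-\langle u',y\rangle$. Note that, together, these two partial conditions say exactly that $(x,p,y)$ is a saddle point of $L_{\bar u}$.

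The step I expect to be the genuine obstacle is passing between the attained-infimum equality and the subgradient statement $(0,\bar u)\in\partial_{(p,y)}[-L](x,p,y)$: since $-L(x,\cdot,\cdot)$ is the conjugate of $(z,u)\mapsto F(x,z,u)$, the equivalence of Fenchel equality on the primal side with a subgradient on the conjugate side requires the biconjugation $F(x,\cdot,\cdot)^{**}=F(x,\cdot,\cdot)$, i.e.\ closedness of $F$ in $(z,u)$. This is exactly where the hypothesis that $Ef$ be closed in $u$ enters: combined with the $\sigma(\X,\V)$-closedness of $\X_a$ from \thref{lem:Xaclosed}, it makes $(z,u)\mapsto Ef(x,u)+\delta_\N(x-z)$ closed, so its biconjugate in $(z,u)$ recovers it and the translation is legitimate in both directions. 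With closedness secured, the two partial conditions are fully interchangeable with $(0,p,y)\in\partial F(x,0,\bar u)$, completing the equivalence of 3 with 1 and 2.
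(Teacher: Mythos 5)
Your proof is correct. Note that the paper itself offers no proof of this theorem: it is presented, together with \thref{thm:duality1,thm:duality2}, as a restatement of the main duality results of \cite{roc74}, so there is no in-paper argument to compare against; what you have written is exactly the standard argument underlying those cited results. In particular, you have located the role of the closedness hypothesis correctly: the passage from 2 to 3 is automatic, since Fenchel equality for $F(x,\cdot,\cdot)$ and its conjugate $-L(x,\cdot,\cdot)$ always yields the conjugate-side subgradient $(0,\bar u)\in\partial_{(p,y)}[-L](x,p,y)$, whereas recovering 2 from 3 needs $F(x,\cdot,\cdot)^{**}=F(x,\cdot,\cdot)$ at $(0,\bar u)$, i.e.\ closedness of $F$ in $(z,u)$ — which is precisely what closedness of $Ef$ in $u$ combined with the $\sigma(\X,\V)$-closedness of $\X_a$ from \thref{lem:Xaclosed} provides, as the paper remarks just before stating the three theorems. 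The only point to keep in mind is the usual convention that subdifferentials are taken only at points where the function is finite; this is what makes the finiteness in condition 1 and the inclusion in condition 2 match up, and your appeal to properness handles it.
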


In order to write the dual problem and the optimality conditions more explicitly in terms of the problem data, we will first derive explicit expressions for $F^*$ and $\varphi^*$. The rest of the section will then focus on the Lagrangian, the associated minimax problem and optimality conditions. We will denote the orthogonal complement of $\X_a$ by
\[
\X_a^\perp := \{v\in\V\,|\,\langle x,v\rangle = 0\quad \forall x\in\X_a\}.
\]

\begin{lemma}\thlabel{lem:dual}
If $\dom Ef\cap(\X\times\U)\ne\emptyset$, then
\begin{align*}
F^*(v,p,y) &= Ef^*(v+p,y)+\delta_{\X_a^\perp}(p),
\end{align*}
and, in particular, 
\[
\varphi^*(p,y)= Ef^*(p,y)+\delta_{\X_a^\perp}(p).
\]
If, in addition, $\dom Ef^*\cap(\V\times\Y)\ne\emptyset$, then $F$ is proper and closed.
\end{lemma}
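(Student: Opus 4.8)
**Computing $F^*$.** The plan is to compute the conjugate of $F(x,z,u) = Ef(x,u) + \delta_\N(x-z)$ directly from the definition, exploiting the fact that the two summands involve disjoint roles for the variables. By definition,
\[
F^*(v,p,y) = \sup_{x,z,u}\{\langle x,v\rangle + \langle z,p\rangle + \langle u,y\rangle - Ef(x,u) - \delta_\N(x-z)\}.
\]
The first step is to change variables by writing $w := x - z$, so that $z = x - w$ and the constraint becomes $w\in\N$ (i.e.\ $w\in\X_a$). Substituting gives
\[
F^*(v,p,y) = \sup_{x,u}\{\langle x,v+p\rangle + \langle u,y\rangle - Ef(x,u)\} + \sup_{w\in\X_a}\{-\langle w,p\rangle\},
\]
since the $w$-supremum decouples from the $(x,u)$-supremum once $z$ is eliminated. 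The second supremum is $\sup_{w\in\X_a}\langle w,-p\rangle = \delta_{\X_a^\perp}(p)$ because $\X_a$ is a linear subspace, so this term is $0$ when $p\in\X_a^\perp$ and $+\infty$ otherwise. The first supremum is precisely $(Ef)^*(v+p,y)$, the conjugate of the integral functional $Ef$ on the pair $\X\times\U$ against $\V\times\Y$.

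**Identifying $(Ef)^*$ with $Ef^*$.** To replace $(Ef)^*$ by $Ef^*$, I would invoke \thref{thm:cif0} applied to the convex normal integrand $f$ on $\reals^n\times\reals^m\times\Omega$, viewed on the paired decomposable solid spaces $\X\times\U$ and $\V\times\Y$. The hypothesis $\dom Ef\cap(\X\times\U)\ne\emptyset$ is exactly the condition $\dom Ef\ne\emptyset$ required by that theorem, so it yields $(Ef)^* = Ef^*$ on $\V\times\Y$. This gives
\[
F^*(v,p,y) = Ef^*(v+p,y) + \delta_{\X_a^\perp}(p),
\]
and setting $v=0$ produces the stated formula for $\varphi^*(p,y) = F^*(0,p,y)$.

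**Properness and closedness of $F$.** For the final claim, I would argue that $F$ is proper and closed by a biconjugation argument. Properness is the easier half: $\dom Ef\cap(\X\times\U)\ne\emptyset$ gives a point where $F$ is finite (choosing $z=x$ so the indicator vanishes), while $\dom Ef^*\cap(\V\times\Y)\ne\emptyset$ yields, via Fenchel's inequality applied to the formula just derived, an affine minorant of $F$, so $F>-\infty$ everywhere. For closedness, the natural route is to show $F = F^{**}$. Using the $F^*$ formula and computing $F^{**}$ should recover $Ef(x,u)+\delta_\N(x-z)$, where the key inputs are that $Ef$ is closed (by \thref{cor:cif}, the assumption $\dom Ef\ne\emptyset$ together with $\dom Ef^*\ne\emptyset$ makes $Ef$ proper and closed, hence equal to its biconjugate) and that $\X_a$ is $\sigma(\X,\V)$-closed by \thref{lem:Xaclosed} (so $\delta_\N$ restricted to $\X$ equals its own biconjugate, as $\delta_{\X_a}^{**}=\delta_{\cl\X_a}=\delta_{\X_a}$).

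**Main obstacle.** The routine part is the change of variables and the decoupling of the suprema. The step requiring the most care is the closedness of $F$ in all three variables jointly: one must verify that the sum $Ef(x,u)+\delta_\N(x-z)$ is lower semicontinuous on $\X\times\X\times\U$, which reduces to checking that the infimal-convolution structure implicit in $F^{**}$ does not enlarge the effective domain. Because the two summands of $F$ depend on the variables through the independent blocks $(x,u)$ and $(x-z)$, and each summand is separately closed under the hypotheses, the sum is closed; making this rigorous amounts to confirming that $F^{**}$, computed from the explicit $F^*$, returns $F$ without any lower-semicontinuous relaxation of $Ef$ or of $\delta_\N$—which is guaranteed precisely because both $Ef$ and $\delta_{\X_a}$ are already closed under the stated domain conditions.
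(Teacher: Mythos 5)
Your computation of $F^*$ is essentially the paper's own proof: the paper makes the same change of variables $z'=x-z$ and then applies the interchange rule \cite[Theorem~14.60]{rw98} to the remaining $(x,u)$-supremum, which is precisely the content of \thref{thm:cif0} that you invoke (that theorem is itself proved by the interchange rule), so the two derivations coincide; viewing $\X\times\U$ paired with $\V\times\Y$ as again decomposable and solid is what makes this invocation legitimate. The only place you deviate is the final claim. The paper disposes of properness and closedness of $F$ in one line: under $\dom Ef^*\cap(\V\times\Y)\ne\emptyset$, the functional $Ef$ is proper and closed by \thref{cor:cif}, $\X_a$ is $\sigma(\X,\V)$-closed by \thref{lem:Xaclosed}, and $F$ is then closed simply as a sum of proper closed functions. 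Your biconjugation route $F=F^{**}$ is also valid---the computation mirrors the first part, using $(Ef^*)^*=Ef$ from \thref{cor:cif} and $(\X_a^\perp)^\perp=\X_a$ from the bipolar theorem together with \thref{lem:Xaclosed}---but it is heavier than necessary, and the ``main obstacle'' you describe is not one: lower semicontinuity is preserved under sums of $(-\infty,+\infty]$-valued functions regardless of any block structure in the variables, and properness of $Ef$ (which is where $\dom Ef^*\ne\emptyset$ enters) is exactly what rules out $-\infty$ values and makes that elementary fact applicable.
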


\begin{proof}
By the interchange rule \cite[Theorem~14.60]{rw98},
\begin{align*}
F^*(v,p,y) &= \sup_{x\in\X,z\in\X,u\in\U}\{\langle x,v\rangle + \langle z,p\rangle + \langle u,y\rangle - Ef(x,u) \,|\, x-z\in\X_a\}\\
&= \sup_{x\in \X,z'\in\X,u\in\U}\{E[x\cdot(v+p) + u\cdot y - f(x,u) - z'\cdot p] \,|\, z'\in\X_a\}\\
&= Ef^*(v+p,y)+\delta_{\X_a^\perp}(p).
\end{align*}
When $\dom Ef^*\ne\emptyset$, $Ef$ is proper and closed, by \thref{cor:cif}, so $F$ is closed as a sum of proper and closed functions. Clearly, $F$ is proper.
\end{proof}

As an immediate corollary, we get the following.

\begin{theorem}\thlabel{thm:dualproblem}
If $\dom Ef\cap(\X\times\U)\ne\emptyset$, the dual problem \eqref{d} can be written as
\begin{equation}\label{d}\tag{$D$}
\maximize\quad \langle\bar u,y\rangle - Ef^*(p,y)\quad\ovr\quad (p,y)\in\X_a^\perp\times\Y.
\end{equation}
\end{theorem}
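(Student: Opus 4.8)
The plan is to obtain \thref{thm:dualproblem} as an essentially immediate corollary of \thref{lem:dual} together with the reformulation of the dual problem already recorded in the text. Recall that the general framework of \cite{roc74} gives the dual problem \eqref{d} in the form
\[
\maximize\quad \langle\bar u,y\rangle - F^*(0,p,y)\quad\ovr (p,y)\in\V\times\Y,
\]
and that we have already observed $\varphi^*(p,y)=F^*(0,p,y)$. So the entire content of the theorem is to substitute the explicit expression for $F^*(0,p,y)$ furnished by \thref{lem:dual} and to read off the resulting feasible region.

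First I would invoke the hypothesis $\dom Ef\cap(\X\times\U)\ne\emptyset$, which is exactly the standing assumption of \thref{lem:dual}, so that lemma applies. Evaluating its formula at $v=0$ gives
\[
F^*(0,p,y) = Ef^*(0+p,y)+\delta_{\X_a^\perp}(p) = Ef^*(p,y)+\delta_{\X_a^\perp}(p),
\]
which is precisely the stated expression for $\varphi^*(p,y)$. Substituting this into the objective $\langle\bar u,y\rangle - F^*(0,p,y)$ yields
\[
\langle\bar u,y\rangle - Ef^*(p,y) - \delta_{\X_a^\perp}(p).
\]

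The final step is purely to absorb the indicator term into the feasible set: since $\delta_{\X_a^\perp}(p)=+\infty$ whenever $p\notin\X_a^\perp$, those points contribute $-\infty$ to a maximization and may be discarded, so maximizing over $(p,y)\in\V\times\Y$ with the indicator present is the same as maximizing $\langle\bar u,y\rangle - Ef^*(p,y)$ over the restricted domain $(p,y)\in\X_a^\perp\times\Y$. This produces exactly \eqref{d} as displayed in the statement. I do not anticipate a genuine obstacle here — the theorem is flagged in the text as ``an immediate corollary'' — so the only thing to be careful about is the bookkeeping: confirming that the perturbation variable $v$ is set to zero correctly when passing from $F^*$ to $\varphi^*$, and noting that $\X_a^\perp\subset\V$ so the restriction is consistent with the ambient space $\V\times\Y$ over which the dual was originally posed.
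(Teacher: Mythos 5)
Your proof is correct and is exactly the paper's argument: the paper presents \thref{thm:dualproblem} as an immediate corollary of \thref{lem:dual}, obtained by setting $v=0$ in the formula $F^*(v,p,y)=Ef^*(v+p,y)+\delta_{\X_a^\perp}(p)$ (equivalently, using $\varphi^*(p,y)=F^*(0,p,y)$) and absorbing the indicator $\delta_{\X_a^\perp}(p)$ into the constraint set. No discrepancy to report.
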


Much as in \cite[Section~4]{rw76} and \cite[Proposition~1]{pli82}, the orthogonal complement of $\X_a$ can be expressed in terms of the set
\[
\N^\perp:=\{v\in L^1\mid\langle x,v\rangle=0\quad \forall x\in\N\cap L^\infty\}.
\]

\begin{lemma}\thlabel{lem:vperp}
The set $\X_a$ is $\sigma(\X,\V)$-closed and
\[
\X_a^\perp = \N^\perp\cap\V = \{v\in\V\,|\,E_tv_t=0\quad t=0,\ldots,T\}.
\]
\end{lemma}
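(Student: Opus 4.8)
The plan is to separate the closedness claim from the two set equalities, and to prove the equalities by first giving a pointwise characterization of $\N^\perp$ and then bridging between the $L^\infty$-test defining $\N^\perp$ and the full pairing against $\X_a$ by a truncation argument. The $\sigma(\X,\V)$-closedness of $\X_a$ is already recorded in \thref{lem:Xaclosed}, so I would simply invoke it; for completeness one notes that $\N$ is closed in probability, that $\X_a=\X\cap\N$ is therefore $L^0$-closed in $\X$, and that the $L^0$-topology is coarser than $\tau(\X,\V)$ by \thref{lem:rel}, so the convex set $\X_a$ is Mackey-closed and hence $\sigma(\X,\V)$-closed.

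For the equality $\N^\perp\cap\V=\{v\in\V\mid E_tv_t=0,\ t=0,\dots,T\}$, with $E_t:=E^{\F_t}$, I would prove the pointwise characterization $\N^\perp=\{v\in L^1\mid E_tv_t=0\ \forall t\}$ and intersect with $\V$. Writing $\langle x,v\rangle=\sum_{t=0}^TE[x_t\cdot v_t]$, the inclusion ``$\supseteq$'' is immediate: if $E_tv_t=0$ for each $t$ and $x\in\N\cap L^\infty$, then $x_t$ is bounded and $\F_t$-measurable, so taking out what is known gives $E[x_t\cdot v_t]=E[x_t\cdot E_tv_t]=0$, whence $v\in\N^\perp$. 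For ``$\subseteq$'', I would test $v$ against the bounded adapted processes supported at a single time $t$ with $x_t=1_Ae$ for $A\in\F_t$ and $e\in\reals^{n_t}$; then $0=\langle x,v\rangle=E[1_A\,e\cdot v_t]$ for every $A\in\F_t$ forces $e\cdot E_tv_t=0$ a.s., and letting $e$ range over a basis yields $E_tv_t=0$.

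The equality $\X_a^\perp=\N^\perp\cap\V$ is where the unboundedness of the strategies must be dealt with. The inclusion $\X_a^\perp\subseteq\N^\perp\cap\V$ is easy: any $v\in\X_a^\perp$ lies in $\V\subseteq L^1$ by \thref{lem:rel}, and since $L^\infty\subseteq\X$ gives $\N\cap L^\infty\subseteq\X\cap\N=\X_a$, the relation $\langle x,v\rangle=0$ already holds for all $x\in\N\cap L^\infty$, so $v\in\N^\perp$. The reverse inclusion is the only genuine point: given $v\in\N^\perp\cap\V$ and an arbitrary, possibly unbounded, $x\in\X_a$, I would approximate by the truncations $x^k:=1_{\{|x|\le k\}}x$. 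Because $\{|x_t|\le k\}\in\F_t$, each $x^k$ is again bounded and adapted, hence lies in $\N\cap L^\infty$, so $\langle x^k,v\rangle=0$. Since $x^k\to x$ pointwise with $|x^k\cdot v|\le\sum_t|x_t\cdot v_t|\in L^1$ (the pairing of $\X_t$ with $\V_t$ guarantees integrability), dominated convergence gives $\langle x,v\rangle=\lim_k\langle x^k,v\rangle=0$, so $v\in\X_a^\perp$.

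The main obstacle, modest but essential, is exactly this passage from $\N^\perp$ to $\X_a^\perp$: the definition of $\N^\perp$ only tests against bounded adapted processes, whereas $\X_a$ contains unbounded ones, so the orthogonality must be extended. Solidity and decomposability of $\X$ are what keep the truncations $x^k$ inside $\N\cap L^\infty$, and the containment $\V\subseteq L^1$ from \thref{lem:rel} supplies the integrable dominating function; together they legitimize the dominated-convergence step. No compactness or topological closure argument is needed beyond \thref{lem:Xaclosed}.
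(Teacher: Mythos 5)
Your proof is correct, but it is organized differently from the paper's. The paper's proof exploits solidity to write $\X_a = \X_0(\F_0)\times\cdots\times\X_T(\F_T)$, so that $v\in\X_a^\perp$ if and only if $E[x_t\cdot v_t]=0$ for every $x_t\in\X_t(\F_t)$ and every $t$; it then applies the conditional-expectation identity $E[x_t\cdot v_t]=E[x_t\cdot E_tv_t]$ (\thref{lem:ce}) \emph{directly to unbounded} $x_t\in\X_t(\F_t)$, which is legitimate because the pairing of $\X_t$ with $\V_t$ already makes $x_t\cdot v_t$ integrable. This renders your main technical device --- the componentwise truncation $x_t^k=1_{\{|x_t|\le k\}}x_t$ followed by dominated convergence --- unnecessary: there is no need to bridge from bounded to unbounded test processes when the bilinear form is defined on all of $\X_a\times\V$. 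What your route buys in exchange is twofold: it is more self-contained (the only conditional-expectation fact you need is ``taking out what is known'' for bounded integrands), and it makes the middle equality $\X_a^\perp=\N^\perp\cap\V$ fully explicit, whereas the paper's proof establishes the characterization $\{v\in\V\mid E_tv_t=0,\ t=0,\dots,T\}$ and leaves the identification with $\N^\perp\cap\V$ to the reader (it follows by running the same argument with test processes in $\N\cap L^\infty$). Two small points of hygiene in your write-up: the displayed definition $x^k:=1_{\{|x|\le k\}}x$ is inconsistent with your justification --- the event $\{|x|\le k\}$ for the whole path is only $\F_T$-measurable, so the truncation must be done componentwise as $x^k_t:=1_{\{|x_t|\le k\}}x_t$, which is what your parenthetical $\{|x_t|\le k\}\in\F_t$ indicates you intend; and the domination $|x^k\cdot v|\le\sum_t|x_t\cdot v_t|\in L^1$ uses the componentwise integrability \eqref{eq:solidci}, which comes from solidity, so it is worth citing that display rather than the pairing alone.
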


\begin{proof}
Since $\N$ $\N$ is closed in $L^0$, Lemma~\ref{lem:rel} implies that $\X_a$ is closed in $\tau(\X,\V)$ and thus, by convexity, also in $\sigma(\X,\V)$. 
Since
\[
\X_a = \X_0(\F_0)\times\cdots\times\X_T(\F_T),
\]
we have $v\in\X_a^\perp$ if and only if $E[x_t\cdot v_t]=0$ for every $x_t\in\X_t(\F_t)$. Here, $E[x_t\cdot v_t]=E[x_t\cdot (E_t v_t)]$, by \thref{lem:ce}.
\end{proof}

Note that the dual objective can be written also as
\[
\langle\bar u,y\rangle - Ef^*(p,y) = E\inf_{(x,u)\in\reals^n\times\reals^m}[f(x,u)-x\cdot p+(\bar u-u)\cdot y].
\]
This is the optimum value in a relaxed version of the primal problem \eqref{sp} where we are now allowed to optimize over both $x$ and $u$ and the information constraint $x\in\N$ has been removed so the minimization can be done. The constraints have been replaced by linear penalties given by the dual variables $p$ and $y$. The optimum value of \eqref{d} is less than or equal to that of \eqref{spx}. If the value function $\varphi$ is closed at $(0,\bar u)$, then, by \thref{thm:duality1}, the optimum values can be made arbitrarily close by an appropriate choice of $(p,y)$. If $(p,y)\in\partial \varphi(0,\bar u)$, then, by \thref{thm:duality2}, there is no duality gap and $(p,y)$ solves the dual. This implies, in particular, that $p$ is a subgradient of $\varphi$ with respect to the first argument at $(0,\bar u)$, i.e.,
\[
Ef(x+z,\bar u)-\langle z,p\rangle \ge \varphi(0,\bar u)\quad\forall x\in\X_a,z\in\X.
\]
In other words, one cannot improve the optimum value of \eqref{spx} by adding a nonadapted perturbation $z$ to the strategy $x$ when one has to pay $\langle z,p\rangle$. Such an element $p\in\X_a^\perp$ is known as a {\em shadow price of information} of \eqref{spx}. In the deterministic setting, $\X_a^\perp=\{0\}$ so the dual problem becomes
\[
\maximize\quad\bar u\cdot y -f^*(0,y)\quad\ovr\quad y \in\reals^m
\]
and we recover the classical duality framework in finite-dimensional spaces.

\thref{thm:dualproblem} can be used to restate \thref{thm:duality1,thm:duality2,thm:duality3} more explicitly. In particular, the first two equivalences in \thref{thm:duality3} can be restated as follows.

\begin{theorem}\thlabel{thm:duality}
If \eqref{spx} and \eqref{d} are feasible, then the following are equivalent
\begin{enumerate}
\item\label{kkt:1}
$x$ solves \eqref{spx}, $(p,y)$ solves \eqref{d} and $\inf\eqref{spx}=\sup\eqref{d}$,
\item\label{kkt:2}
$x\in\X_a$, $(p,y)\in\X_a^\perp\times\Y$ and
\[
(p,y)\in\partial f(x,\bar u)\quad \text{a.s.}
\]
\end{enumerate}
\end{theorem}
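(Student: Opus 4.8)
The plan is to reduce the joint statement \ref{kkt:1} to the abstract subgradient condition of \thref{thm:duality3} and then unfold that condition into the scenariowise statement \ref{kkt:2}, using the explicit conjugate formula of \thref{lem:dual} together with the pointwise description of subgradients of integral functionals in \thref{thm:cif0}. To begin, feasibility of \eqref{spx} gives $\inf\eqref{spx}<\infty$ while feasibility of \eqref{d} gives $\sup\eqref{d}>-\infty$; together with weak duality $\inf\eqref{spx}\ge\sup\eqref{d}$, the equality $\inf\eqref{spx}=\sup\eqref{d}$ then automatically places the common value in $\reals$, so \ref{kkt:1} coincides with statement~1 of \thref{thm:duality3} and hence with $(0,p,y)\in\partial F(x,0,\bar u)$. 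Feasibility also yields $\dom Ef\cap(\X\times\U)\ne\emptyset$ and $\dom Ef^*\cap(\V\times\Y)\ne\emptyset$, so $F$ is proper by \thref{lem:dual} and $Ef$ is proper and closed with $(Ef)^*=Ef^*$ by \thref{cor:cif}; this is what makes Fenchel's equality a valid test for the subgradients below.

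Next I would unfold the inclusion. By Fenchel's equality, $(0,p,y)\in\partial F(x,0,\bar u)$ holds if and only if $F(x,0,\bar u)$ is finite and
\[
F(x,0,\bar u)+F^*(0,p,y)=\langle\bar u,y\rangle.
\]
Substituting $F(x,0,\bar u)=Ef(x,\bar u)+\delta_\N(x)$ and, from \thref{lem:dual}, $F^*(0,p,y)=Ef^*(p,y)+\delta_{\X_a^\perp}(p)$, finiteness of both sides forces $x\in\X\cap\N=\X_a$ and $p\in\X_a^\perp$, and the identity reduces to $Ef(x,\bar u)+Ef^*(p,y)=\langle\bar u,y\rangle$.

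The decisive point is that $x\in\X_a$ and $p\in\X_a^\perp$ give $\langle x,p\rangle=0$, so the reduced identity is the same as
\[
Ef(x,\bar u)+Ef^*(p,y)=\langle x,p\rangle+\langle\bar u,y\rangle,
\]
which is exactly Fenchel's equality expressing $(p,y)\in\partial Ef(x,\bar u)$. Applying \thref{thm:cif0} (equivalently the final assertion of \thref{cor:cif}) then turns this into the scenariowise condition that $Ef(x,\bar u)$ is finite and $(p,y)\in\partial f(x,\bar u)$ almost surely, which, combined with $x\in\X_a$ and $p\in\X_a^\perp$, is precisely \ref{kkt:2}. Since every step is an equivalence, reading the chain in reverse yields the converse implication.

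The main thing to watch is the integrability bookkeeping that keeps the scenariowise passage legitimate: one must invoke feasibility through \thref{cor:cif} to ensure $Ef$ and $Ef^*$ are proper, so that both $f(x,\bar u)$ and $f^*(p,y)$ are integrable whenever $(p,y)\in\partial f(x,\bar u)$ a.s., which is what licenses \thref{thm:cif0} to move freely between $\partial Ef$ and the pointwise $\partial f$. The genuinely load-bearing observation, however, is the trivial-looking $\langle x,p\rangle=0$, which reconciles the Fenchel identity for $F$ (carrying no $\langle x,p\rangle$ term) with the one for $Ef$ (which carries it).
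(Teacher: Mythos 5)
Your proof is correct and follows essentially the same route as the paper's: reduce statement~1 to the subgradient condition $(0,p,y)\in\partial F(x,0,\bar u)$ via \thref{thm:duality3}, unfold it with \thref{lem:dual}, use $\langle x,p\rangle=0$ to recognize the resulting identity as Fenchel's equality for $Ef$, and pass to the scenariowise condition. The only (inessential) difference is bookkeeping: where you delegate the integrability issues to \thref{cor:cif} and the subdifferential rule of \thref{thm:cif0}, the paper redoes that argument inline, combining the pointwise Fenchel inequality with \thref{lem:Esum} to show the sum of integrals equals the integral of the sum.
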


\begin{proof}
By \thref{thm:duality3}, \ref{kkt:1} is equivalent to $(0,p,y)\in\partial F(x,0,\bar u)$ which means that $F(x,0,\bar u)+F^*(0,p,y)=\langle\bar u,y\rangle$. By Lemma~\ref{lem:dual}, this means that $x\in\X_a$, $p\in\X_a^\perp$ and
\begin{equation}\label{eq:kkt0}
Ef(x,\bar u)+Ef^*(p,y)=E[x\cdot p]+E[\bar u\cdot y].
\end{equation}
Given $(x',u')\in\X\times\U$ and $(p',y')\in\V\times\Y$, we have
\begin{equation}\label{eq:kkt0b}
f(x',u')+f^*(p',y')\ge x'\cdot p' + u\cdot y',
\end{equation}
by Fenchel's inequality, so the feasibility assumptions imply that the negative parts of $f(x',u')$ and $f^*(p',y')$ are integrable and thus, by \thref{lem:Esum},
\[
Ef(x',u')+Ef^*(p',y') = E[f(x',u')+f^*(p',y')].
\]
Thus, \eqref{eq:kkt0} means that $(x,\bar u)$ and $(p,y)$ satisfy \eqref{eq:kkt0b} as an equality, i.e.,  $(p,y)\in\partial f(x,\bar u)$. 
\end{proof}

Note that the dual is feasible e.g.\ if $F$ is bounded from below since then $F^*(0,0)$ is finite. If $\partial\varphi(0,\bar u)$ is nonempty, then by \thref{thm:duality2}, there is no duality gap and a dual has a solution. \thref{thm:duality} thus implies the following.

\begin{corollary}\thlabel{cor:kkt0}
If $\partial\varphi(0,\bar u)\ne\emptyset$, then $\inf\eqref{spx}=\sup\eqref{d}$ and the following are equivalent for an $x\in\X_a$,
\begin{enumerate}
\item\label{kkt:1}
$x$ solves \eqref{spx},
\item\label{kkt:2}
there exists $(p,y)\in\X_a^\perp\times\Y$ with 
\[
(p,y)\in\partial f(x,\bar u)\quad\text{a.s.}
\]
\end{enumerate}
\end{corollary}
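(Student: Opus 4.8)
The plan is to obtain the corollary by chaining together \thref{thm:duality2} and \thref{thm:duality}: the hypothesis $\partial\varphi(0,\bar u)\ne\emptyset$ is exactly what certifies, through \thref{thm:duality2}, both the absence of a duality gap and the existence of a dual solution, and once these are in hand the scenariowise optimality characterization is read off from \thref{thm:duality}.

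First I would fix some $(p,y)\in\partial\varphi(0,\bar u)$. Nonemptiness of the subdifferential at $(0,\bar u)$ forces $\varphi(0,\bar u)$ to be finite, so in particular $\varphi(0,\bar u)<\infty$ and the hypothesis of \thref{thm:duality2} is met. Applying the implication from its second condition (taking the branch $(p,y)\in\partial\varphi(0,\bar u)$, since $\varphi(0,\bar u)\ne-\infty$) to its first condition, I conclude that this $(p,y)$ solves \eqref{d} and that $\inf\eqref{spx}=\sup\eqref{d}$. This already gives the no-gap assertion. Moreover it shows \eqref{d} is feasible, and $\inf\eqref{spx}=\varphi(0,\bar u)\in\reals$ shows \eqref{spx} is feasible as well.

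With both \eqref{spx} and \eqref{d} feasible I can invoke \thref{thm:duality}. For the forward direction, if $x\in\X_a$ solves \eqref{spx}, then together with the dual solution $(p,y)$ produced above and the equality $\inf\eqref{spx}=\sup\eqref{d}$, the first condition of \thref{thm:duality} holds for the pair $(x,(p,y))$; its second condition then yields $(p,y)\in\X_a^\perp\times\Y$ with $(p,y)\in\partial f(x,\bar u)$ almost surely, which is precisely the asserted existence statement. Conversely, if $x\in\X_a$ and some $(p,y)\in\X_a^\perp\times\Y$ satisfies $(p,y)\in\partial f(x,\bar u)$ almost surely, this is the second condition of \thref{thm:duality}, whose first condition then gives that $x$ solves \eqref{spx}.

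I do not expect a genuine obstacle, as the statement is an assembly of two already-established theorems; the points requiring care are bookkeeping rather than substance. The first is noting that nonemptiness of $\partial\varphi(0,\bar u)$ makes $\varphi(0,\bar u)$ finite, which simultaneously licenses \thref{thm:duality2} and guarantees feasibility of \eqref{spx}. The second, in the forward direction, is that one must reuse the \emph{specific} dual solution $(p,y)$ manufactured by \thref{thm:duality2} and lean on the no-gap equality, since it is the conjunction of ``$x$ solves \eqref{spx}'', ``$(p,y)$ solves \eqref{d}'', and $\inf\eqref{spx}=\sup\eqref{d}$ that matches the first condition of \thref{thm:duality} and thereby unlocks the pointwise subgradient inclusion.
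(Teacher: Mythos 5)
Your proof is correct and takes essentially the same route as the paper, which also derives the corollary by combining \thref{thm:duality2} (nonemptiness of $\partial\varphi(0,\bar u)$ yields absence of a duality gap and existence of a dual solution) with the equivalence in \thref{thm:duality}. The bookkeeping you spell out---that subdifferentiability forces $\varphi(0,\bar u)$ to be finite, hence both \eqref{spx} and \eqref{d} are feasible, and that the specific dual solution from \thref{thm:duality2} is reused in the forward direction---is exactly what the paper leaves implicit in its two-sentence justification.
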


The rest of the section focuses on the Lagrangian $L$ and the associated minimax problem. The Lagrangian $L$ itself has a somewhat cumbersome expression but it turns out that it is ``equivalent'' to a simpler function that has the same saddle-value and saddle-points. The expressions derived below, involve the 
{\em Lagrangian integrand} $l:\reals^n\times\reals^m\times\Omega\to\ereals$ defined by
\[
l(x,y,\omega) := \inf_{u\in\reals^m}\{f(x,u,\omega)-u\cdot y\}.
\]
For any $(x,y,\omega)$, the function $l(\cdot,y,\omega)$ is convex and $l(x,\cdot,\omega)$ is upper semicontinuous and concave. Clearly,
\[
f^*(v,y,\omega) = \sup_{x\in\reals^n}\{x\cdot v - l(x,y,\omega)\}
\]
so, by the biconjugate theorem,
\begin{align*}
(\cl_x l)(x,y,\omega) = \sup_{v\in\reals^n}\{x\cdot v - f^*(v,y,\omega)\}.
\end{align*}
Given $x\in\X$ and $y\in\Y$, the functions
\[
(y,\omega) \mapsto-l(x(\omega), y,\omega) = \sup_{u\in\reals^m}\{u\cdot y-f(x(\omega),u,\omega)\}
\]
and
\[
(x,\omega) \mapsto (\cl_xl)(x, y(\omega),\omega) = \sup_{v\in\reals^n}\{x\cdot v-f^*(v,y(\omega),\omega)\}
\]
are normal integrands, by Proposition~14.45 and Theorem~14.50 of \cite{rw98}. In particular, the functions
\[
\omega\mapsto l(x(\omega),y(\omega),\omega)\quad\text{and}\quad\omega\mapsto(\cl_xl)(x(\omega),y(\omega),\omega)
\]
are measurable, by \cite[Proposition~14.28]{rw98},

We will denote the projection of $\dom Ef$ to the $x$ component by 
\[
\dom_xEf := \{x\in\X\mid \exists u\in\U:\ Ef(x,u)<\infty\}
\]
and the projection of $\dom Ef^*$ to the $y$ component by
\[
\dom_yEf^* := \{y\in\Y\mid \exists v\in\U:\ Ef^*(v,y)<\infty\}.
\]

\begin{lemma}\thlabel{lem:L}
We have
\begin{align*}
  L(x,p,y) &=
  \begin{cases}
  +\infty & \text{if $x\notin\dom_xEf$},\\
  El(x,y) - \langle x,p\rangle & \text{if $x\in\dom_xEf$ and $p\in\X_a^\perp$},\\
  -\infty & \text{otherwise}.
  \end{cases}
\end{align*}
If $\dom Ef\ne\emptyset$, then 
\begin{align*}
  (\cl_xL)(x,p,y)   &=
    \begin{cases}
    E(\cl_x l)(x,y) - \langle x,p\rangle & \text{if $y\in\dom_yEf^*$ and $p\in\X_a^\perp$},\\
    -\infty & \text{otherwise}.
    \end{cases}
\end{align*}
If $\dom Ef\ne\emptyset$ and $\dom Ef^*\ne\emptyset$, then all convex-concave functions between $L$ and $\cl_x L$ have the same saddle-value, saddle-points and subdifferentials. In this case, 
\[
v\in\partial_x L(x,p,y),\quad (z,u)\in\partial_{p,y}[-L](x,p,y)
\]
if and only if $x-z\in\X_a$, $p\in\X_a^\perp$ and 
\begin{align*}
p+v\in\partial_x l(x,y),\quad u\in\partial_y [-l](x,y) \quad \text{a.s.}
\end{align*}
\end{lemma}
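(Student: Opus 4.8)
The plan is to evaluate the Lagrangian directly from its definition, then obtain $\cl_x L$ by a double partial conjugation in $x$, and finally read off the equivalence statement and the subgradient rule from the saddle-function theory of \cite{roc74}.

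First I would compute $L$. Writing out the definition and substituting $w:=x-z$ (so that $z=x-w$ and the constraint $x-z\in\N$ becomes $w\in\N\cap\X=\X_a$), the objective splits additively into a term depending only on $u$ and one depending only on $w$:
\[
L(x,p,y) = \inf_{u\in\U}\{Ef(x,u) - \langle u,y\rangle\} + \Big(-\langle x,p\rangle + \inf_{w\in\X_a}\langle w,p\rangle\Big).
\]
The second infimum is that of a linear functional over the subspace $\X_a$, hence $0$ if $p\in\X_a^\perp$ and $-\infty$ otherwise. For the first infimum I would invoke the interchange rule \cite[Theorem~14.60]{rw98} applied to the normal integrand $(u,\omega)\mapsto f(x(\omega),u,\omega)-u\cdot y(\omega)$, obtaining $El(x,y)$ whenever $x\in\dom_x Ef$ (which guarantees properness of the integral functional). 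The only delicate point is the bookkeeping of cases: when $x\notin\dom_x Ef$ one has $Ef(x,u)=+\infty$ for every $u$, so every admissible value of the objective is $+\infty$ and $L=+\infty$ irrespective of $p$; this must be verified pointwise rather than by naively adding the two separate infima, which would produce an indeterminate $+\infty-\infty$. Combining the cases yields the first displayed formula.

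To obtain $\cl_x L$ I would use that $\cl_x L(\cdot,p,y)$ is the biconjugate of the convex function $x\mapsto L(x,p,y)$ in the pairing $(\X,\V)$. Its conjugate in $x$ is, by definition of $F^*$, exactly $F^*(\cdot,p,y)$, which \thref{lem:dual} identifies with $Ef^*(\cdot+p,y)$ when $p\in\X_a^\perp$ and with $+\infty$ when $p\notin\X_a^\perp$. In the latter case $\cl_x L(\cdot,p,y)\equiv-\infty$, giving the ``otherwise'' branch. In the former case, conjugating once more and substituting $v':=v+p$ reduces the claim to
\[
\sup_{v'\in\V}\{\langle x,v'\rangle - Ef^*(v',y)\} = E(\cl_x l)(x,y),
\]
which is the interchange rule applied to $f^*(\cdot,y)$ — valid precisely when $y\in\dom_y Ef^*$, the supremum being $-\infty$ otherwise — together with the identity $(\cl_x l)(x,y,\omega)=\sup_v\{x\cdot v-f^*(v,y,\omega)\}$ recorded before the statement.

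For the last assertion I would appeal to the theory of equivalent convex–concave functions in \cite{roc74}. By construction $L$ and $\cl_x L$ have the same conjugate in $x$, namely $F^*$; since conjugation reverses the order and $(\cl_x L)^*=(L^{**})^*=L^*$, every $K$ with $\cl_x L\le K\le L$ has conjugate squeezed to $L^*$ as well, so all such $K$ lie in one equivalence class and therefore share saddle-value, saddle-points and subgradients. It then remains to translate the saddle-subgradient inclusions into pointwise form. On the region $p\in\X_a^\perp$, $x\in\dom_x Ef$, the inclusion $v\in\partial_x L$ becomes $v+p\in\partial_x[El(\cdot,y)](x)$, while $(z,u)\in\partial_{p,y}[-L]$ splits on testing separately in $p$ and in $y$: testing in $p$ over the subspace $\X_a^\perp$ forces $\langle x-z,\cdot\rangle$ to vanish on $\X_a^\perp$, i.e.\ $x-z\in(\X_a^\perp)^\perp=\X_a$ by the closedness of $\X_a$ in \thref{lem:vperp}, and testing in $y$ gives $u\in\partial_y[-El(x,\cdot)](y)$. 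Finally I would pass from these integral-functional subgradients to the scenariowise conditions $p+v\in\partial_x l(x,y)$ and $u\in\partial_y[-l](x,y)$ a.s.\ via \thref{thm:cif0}, applied to the closed normal integrands $\cl_x l(\cdot,y,\omega)$ and $-l(x,\cdot,\omega)$, using that a convex function and its closure share their subgradients. I expect this last part — correctly invoking the saddle-function equivalence of \cite{roc74} and handling the fact that $l$ itself, as opposed to $\cl_x l$, is only asserted to be a normal integrand after partial closure — to be the main obstacle; the two explicit formulas are essentially bookkeeping around the interchange rule and the indeterminate $\infty-\infty$ cases.
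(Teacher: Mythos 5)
Your first three steps are correct and essentially reproduce the paper's argument: the formula for $L$ comes from the same change of variables and the interchange rule \cite[Theorem~14.60]{rw98} (the paper applies it to the joint infimum over $(z',u)$ with $z'=x-z$, while you split off the linear infimum over $\X_a$ first --- same computation, same $\infty-\infty$ bookkeeping); the formula for $\cl_xL$ comes from biconjugation through $F^*$ and \thref{lem:dual} exactly as in the paper; and your squeeze argument showing that every $K$ between $\cl_xL$ and $L$ has the same partial conjugate $F^*$ is the standard justification for the equivalence-class statement, which the paper obtains by citing \cite{roc71c} and which needs the properness and closedness of $F$ supplied by \thref{lem:dual}.

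The genuine gap is in the subdifferential characterization, precisely where you anticipated trouble. Your plan treats $\partial_xL$ and $\partial_{p,y}[-L]$ separately from the explicit formula for $L$. The $p$-component (giving $x-z\in(\X_a^\perp)^\perp=\X_a$) and the $y$-component can be made to work: for fixed $x\in\dom_xEf$ and $p\in\X_a^\perp$ the formula $L(x,p,y)=El(x,y)-\langle x,p\rangle$ holds for \emph{all} $y\in\Y$, and $(y,\omega)\mapsto-l(x(\omega),y,\omega)$ is a genuine normal integrand, so \thref{thm:cif0} applies. But the $x$-component breaks down twice. First, $L(\cdot,p,y)$ agrees with $El(\cdot,y)-\langle\cdot,p\rangle$ only on $\dom_xEf$ and is $+\infty$ outside, whereas $El(\cdot,y)$ can be finite outside $\dom_xEf$ (the pointwise minimizing $u(\omega)$ need not lie in $\U$); a subgradient of the restricted function need not be a subgradient of the unrestricted integral functional (compare $x\mapsto x$ with $x\mapsto x+\delta_{[0,\infty)}(x)$ at $x=0$), so the implication ``$v\in\partial_xL$ gives $v+p\in\partial[El(\cdot,y)](x)$'' fails in general. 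Second, \thref{thm:cif0} requires a normal integrand, and $l(\cdot,y(\omega),\omega)$ need not be closed in $x$; your fallback that ``a convex function and its closure share their subgradients'' is a one-way statement, valid only where the two functions agree, which is the very point at issue. The paper closes both holes by treating the two inclusions \emph{jointly}: since $F$ is proper and closed, $v\in\partial_xL(x,p,y)$ and $(z,u)\in\partial_{p,y}[-L](x,p,y)$ hold together if and only if $(v,p,y)\in\partial F(x,z,u)$; by \thref{lem:dual} this is $x-z\in\X_a$, $p\in\X_a^\perp$ plus the Fenchel equality $Ef(x,u)+Ef^*(v+p,y)=E[x\cdot(v+p)]+E[u\cdot y]$; the scenariowise Fenchel inequality (with the integrability bookkeeping of \thref{lem:Esum}) turns this into $(v+p,y)\in\partial f(x,u)$ almost surely; and finally \cite[Theorem~37.5]{roc70a}, applied $\omega$ by $\omega$ to the closed integrand $f$, converts that into $v+p\in\partial_xl(x,y)$ and $u\in\partial_y[-l](x,y)$ with $l$ itself --- no closure operation and no integral-functional subdifferential calculus needed. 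Without this joint route through $\partial F$, the $x$-inclusion cannot be completed as you state it.
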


\begin{proof}
By definition,
\begin{align*}
L(x,p,y) &= \inf_{(z,u)\in\X\times\U}\{F(x,z,u) - \langle z,p\rangle - \langle u,y\rangle\}\\
&= \inf_{(z,u)\in\X\times\U}\{E[f(x,u)-z\cdot p-u\cdot y] \,|\, x-z\in\X_a\}\\
&= \inf_{(z',u)\in\X\times\U}\{E[f(x,u)-(x-z')\cdot p-u\cdot y] \,|\, z'\in\X_a\},
\end{align*}
so the expression for $L$ follows from \cite[Theorem 14.60]{rw98}. By \thref{lem:dual},
\begin{align*}
  (\cl_xL)(x,p,y) &=\sup_{v\in\V}\{\langle x,v\rangle - F^*(v,p,y)\}\\
  &= 
  \begin{cases}
    \sup_{v\in\V}\{\langle x,v\rangle - Ef^*(v+p,y)\} & \text{if $p\in\X_a^\perp$},\\
    -\infty & \text{otherwise}
  \end{cases}
\end{align*}
so the expression for $\cl_x L$ follows from \cite[Theorem 14.60]{rw98} again. When $\dom Ef\ne\emptyset$ and $\dom Ef^*\ne\emptyset$, the function $F$ is proper and closed by \thref{lem:dual}, so the saddle-values, saddle-points and subdifferentials of $L$ and $\cl_x L$ coincide, by \cite[Theorem~2 and 7]{roc71c}.

When $\dom Ef\ne\emptyset$ and $\dom Ef^*\ne\emptyset$, $F$ is closed, by \thref{cor:cif}, and then, $(v,z,u)\in\partial L(x,p,y)$ if and only if $(v,p,y)\in\partial F(x,z,u)$. By \thref{lem:dual}, this means that $x-z\in\X_a$, $p\in\X_a^\perp$ and
\[
Ef(x,u) + Ef^*(v+p,y) = E[x\cdot v] + E[z\cdot p] + E[u\cdot y]
\]
or, equivalently,
\[
Ef(x,u) + Ef^*(v+p,y) = E[x\cdot(v+p)] + E[u\cdot y]
\]
Since, by Fenchel's inequality,
\[
f(x,u,\omega)+f^*(v+p,y,\omega)\ge x\cdot(v+p)+u\cdot y,
\]
this means that $(v+p,y)\in\partial f(x,u)$ almost surely. Since $f$ is closed, this is equivalent to $v+p\in\partial_x l(x,y)$ and $u\in\partial_y[-l](x,y)$.
\end{proof}

\begin{corollary}\thlabel{cor:kkt}
If \eqref{spx} and \eqref{d} are feasible, the following are equivalent with the conditions in \thref{thm:duality},
\begin{enumerate}
\item
$(x,p,y)$ is a saddle-point of 
\[
(x,p,y)\mapsto El(x,y)-\langle x,p\rangle+\langle\bar u,y\rangle,
\]
when minimizing over $x\in\X$ and maximizing over $(p,y)\in\X_a^\perp\times\Y$,
\item\label{kkt:3}
$x\in\X_a$, $(p,y)\in\X_a^\perp\times\Y$ and
\begin{align*}
p\in\partial_x l(x,y),\quad \bar u\in\partial_y[-l](x,y)\quad \text{a.s.}
\end{align*}
\end{enumerate}
\end{corollary}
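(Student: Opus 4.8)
The plan is to use \thref{lem:L} to pass between the saddle-point formulation in the first condition and the scenariowise subgradient inclusions in the second, and then to identify the latter with the subdifferential condition $(p,y)\in\partial f(x,\bar u)$ of \thref{thm:duality}. Feasibility of \eqref{spx} supplies an $x\in\X_a$ with $Ef(x,\bar u)<\infty$, so $\dom Ef\ne\emptyset$, and feasibility of \eqref{d} gives $\dom Ef^*\ne\emptyset$; hence the hypotheses of \thref{lem:L} hold. First I would extend the objective of the first condition to $\hat g(x,p,y):=El(x,y)-\langle x,p\rangle+\langle\bar u,y\rangle$ for $p\in\X_a^\perp$ and $\hat g:=-\infty$ otherwise, and check that $\cl_xL\le\hat g-\langle\bar u,y\rangle\le L$ using the explicit formulas of \thref{lem:L} together with $\cl_xl\le l$. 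Since the extra term $\langle\bar u,y\rangle$ is affine in $y$ and independent of $(x,p)$, it preserves both the convex-concave structure and the sandwiching, so $\hat g$ lies between $\cl_xL+\langle\bar u,y\rangle$ and $L_{\bar u}$. By the invariance of saddle-points asserted in \thref{lem:L}, $\hat g$, and therefore the objective of the first condition on $\X\times(\X_a^\perp\times\Y)$, has the same saddle-points as $L_{\bar u}$.

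Next I would translate a saddle-point of $L_{\bar u}$ into partial subgradients. A point $(x,p,y)$ is such a saddle-point precisely when $0\in\partial_xL(x,p,y)$ and, because $-L_{\bar u}=-L-\langle\bar u,y\rangle$ as a function of $(p,y)$, when $(0,\bar u)\in\partial_{p,y}[-L](x,p,y)$. Substituting $v=0$, $z=0$ and $u=\bar u$ into the subgradient characterization of \thref{lem:L} converts these two inclusions into $x\in\X_a$, $p\in\X_a^\perp$, $p\in\partial_xl(x,y)$ and $\bar u\in\partial_y[-l](x,y)$ almost surely, which is exactly the second condition. This gives the equivalence of the first two conditions.

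To connect the second condition with \thref{thm:duality}, I would reuse the scenariowise duality between $f$ and its Lagrangian integrand established inside the proof of \thref{lem:L}: since $f(\cdot,\cdot,\omega)$ is closed and $l(x,y,\omega)=\inf_u\{f(x,u,\omega)-u\cdot y\}$, one has $(p,y)\in\partial f(x,\bar u)$ almost surely if and only if $p\in\partial_xl(x,y)$ and $\bar u\in\partial_y[-l](x,y)$ almost surely. Hence the second condition is identical to the subdifferential condition $(p,y)\in\partial f(x,\bar u)$ a.s.\ of \thref{thm:duality}, and the stated equivalence with the conditions of \thref{thm:duality} follows.

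The step I expect to be the main obstacle is the sandwiching in the first paragraph: one must handle carefully the points where the case distinctions in \thref{lem:L} render $L_{\bar u}$ or $\cl_xL_{\bar u}$ infinite, namely $x\notin\dom_xEf$ (where $L_{\bar u}=+\infty$) and $y\notin\dom_yEf^*$ (where $\cl_xL_{\bar u}=-\infty$), so that the invariance of saddle-points truly applies to the concrete objective. Once the saddle-points of the two functions are identified, the remaining two equivalences are bookkeeping: plugging the specific dual directions into the subgradient formula and reading off the pointwise $\partial f$--$\partial l$ correspondence.
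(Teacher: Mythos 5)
Your proposal is correct and follows essentially the same route as the paper's (one-line) proof: the paper likewise observes that the objective in condition 1 lies between $L$ and $\cl_x L$ (modulo the affine term $\langle\bar u,y\rangle$), invokes the invariance and subdifferential characterization of \thref{lem:L}, and links to the optimality conditions via \thref{thm:duality3}/\thref{thm:duality}. Your extra care with the affine shift and the pointwise $\partial f$--$\partial l$ correspondence just fills in details the paper leaves implicit.
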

\begin{proof}
The claim follows from \thref{thm:duality3,lem:L}, since the convex-concave function in the first condition lies between $L$ and $\cl_x L$. 
\end{proof}

Similarly, we can augment \thref{cor:kkt0} as follows.

\begin{corollary}\thlabel{cor:kkt2}
If $\partial\varphi(0,\bar u)\ne\emptyset$, then the following are equivalent,
\begin{enumerate}
\item\label{kkt:1}
$x$ solves \eqref{spx},
\item\label{kkt:2}
  $x\in\X_a$ and there exists $(p,y)\in\X_a^\perp\times\Y$ with 
\[
(p,y)\in\partial f(x,\bar u)\quad\text{a.s.}
\]
\item
  $x\in\X_a$ and there exists $(p,y)\in\X_a^\perp\times\Y$ with
  \[
  p\in\partial_x l(x,y),\quad \bar u\in\partial_y[-l](x,y)\quad \text{a.s.}
  \]
\end{enumerate}
\end{corollary}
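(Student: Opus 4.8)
The plan is to treat \thref{cor:kkt2} as a packaging of results already in hand: the equivalence of conditions 1 and 2 is literally \thref{cor:kkt0}, so the only genuinely new content is folding in the scenariowise condition 3, which I would obtain from the correspondence between $\partial f$ and the Lagrangian integrand $l$ established inside the proof of \thref{lem:L}. Before doing anything I would first extract feasibility of both problems from the single hypothesis $\partial\varphi(0,\bar u)\ne\emptyset$: a nonempty subdifferential forces $\varphi(0,\bar u)$ to be finite, so \eqref{spx} is feasible, and \thref{thm:duality2} turns the same hypothesis into the assertion that there is no duality gap and that \eqref{d} admits a solution, whence \eqref{d} is feasible as well. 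With both problems feasible, \thref{cor:kkt0} applies verbatim and yields $\inf\eqref{spx}=\sup\eqref{d}$ together with the equivalence $1\Leftrightarrow 2$.

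It then remains to match condition 2 with condition 3. The membership requirements $x\in\X_a$ and $(p,y)\in\X_a^\perp\times\Y$ are identical in the two statements, so only the almost-sure subgradient conditions need to be reconciled. Here I would argue pointwise: for fixed $\omega$ the integrand $f(\cdot,\cdot,\omega)$ is closed convex and $l(x,y,\omega)=\inf_{u}\{f(x,u,\omega)-u\cdot y\}$, so the saddle-subgradient correspondence — exactly the final step of the proof of \thref{lem:L}, read with $v=0$ — gives that $(p,y)\in\partial f(x,\bar u)$ holds if and only if $p\in\partial_x l(x,y)$ and $\bar u\in\partial_y[-l](x,y)$. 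Applying this on a set of full measure shows the a.s.\ condition in 2 is equivalent to the a.s.\ condition in 3, closing the chain $1\Leftrightarrow 2\Leftrightarrow 3$. A shorter alternative, once feasibility of \eqref{spx} and \eqref{d} is in place, is to invoke \thref{cor:kkt} directly, since it already identifies condition 3 with the optimality conditions of \thref{thm:duality}, i.e.\ with condition 2.

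I do not expect a real obstacle, since all the analytic work — the no-gap theorem, the explicit dual, and the scenariowise correspondence — is already available. The one point deserving care is the reduction at the outset: \thref{cor:kkt} and \thref{thm:duality} carry feasibility hypotheses, whereas \thref{cor:kkt2} is stated only under $\partial\varphi(0,\bar u)\ne\emptyset$, so I must be explicit that this hypothesis simultaneously delivers primal feasibility (finiteness of $\varphi$) and dual feasibility (a dual solution via \thref{thm:duality2}). After that justification the statement follows by bookkeeping from the earlier results.
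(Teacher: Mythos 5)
Your proposal is correct and takes essentially the same route as the paper: the corollary appears there without a separate proof, introduced only by ``Similarly, we can augment \thref{cor:kkt0} as follows,'' and the intended argument is precisely your combination of \thref{thm:duality2} (primal finiteness and a dual solution from $\partial\varphi(0,\bar u)\ne\emptyset$), \thref{cor:kkt0} for $1\Leftrightarrow 2$, and the scenariowise correspondence between $\partial f(x,\bar u)$ and the subgradients of the Lagrangian integrand $l$ (via \thref{lem:L}, or equivalently \thref{cor:kkt}) for $2\Leftrightarrow 3$. The only detail worth making explicit is that the pointwise equivalence requires $f(\cdot,\cdot,\omega)$ to be proper (not merely closed convex) almost surely, which the primal and dual feasibility you extract at the outset already supplies, exactly as in the paper's proof of the analogous step in \thref{thm:kktr}.
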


In the deterministic setting, $\X_a^\perp=\{0\}$ so condition 3 in \thref{cor:kkt2} becomes the Karush-Kuhn-Tucker (KKT) condition in finite-dimensional convex optimization. In the stochastic setting, the shadow price of information $p\in\X_a^\perp$ allows us to write the KKT-conditions scenariowise.



\section{Duality for $(SP)$}\label{sec:rel}

While problem \eqref{spx} in Section~\ref{sec:dual} allows for a convenient dualization within the purely functional analytic conjugate duality framework, there are interesting applications where $\inf\eqref{spx}>\inf\eqref{sp}$ or the infimum in \eqref{sp} is attained in $L^0$ but not in $\X$; see \thref{ex:ceN} for a simple illustration. It may even happen that \eqref{spx} is infeasible while \eqref{sp} is not; see \thref{ex:spxna}. This section shows that many of the duality relations between \eqref{spx} and \eqref{d} derived in Section~\ref{sec:dual} also hold between \eqref{sp} and \eqref{d}.

The function
\[
\bar\varphi(z,u):=\inf_{x\in L^0}\{Ef(x,u)\mid x-z\in\N\}
\]
on $\X\times\U$ gives the optimum value of \eqref{sp} when we perturb the strategies $x$ by $z\in\X$ and vary the parameter $\bar u$ in the space $\U$. In particular, $\bar\varphi(0,\bar u)=\inf\eqref{sp}$. Clearly, $\varphi\ge\bar\varphi$ since the latter is defined by optimizing over a larger class of strategies. However, under a mild condition, their conjugates coincide. 

\begin{lemma}\thlabel{varphibar}
If $\dom Ef\cap(\X\times\U)\ne\emptyset$, then $\varphi^*=\bar\varphi^*$. 
\end{lemma}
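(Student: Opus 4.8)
The plan is to establish the two inequalities $\varphi^*\le\bar\varphi^*$ and $\bar\varphi^*\le\varphi^*$ separately, exploiting the explicit formula $\varphi^*=Ef^*+\delta_{\X_a^\perp}$ from \thref{lem:dual}. The first is immediate: since $\X\subseteq L^0$, the infimum defining $\bar\varphi$ runs over a larger set than the one defining $\varphi$, so $\bar\varphi\le\varphi$ pointwise on $\X\times\U$, and conjugation reverses the inequality. For the converse it suffices, again by \thref{lem:dual}, to prove $\bar\varphi^*(p,y)\le Ef^*(p,y)$ for every $p\in\X_a^\perp$; if $p\notin\X_a^\perp$ then $\varphi^*(p,y)=+\infty$ and nothing is needed. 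Moreover $\dom Ef\cap(\X\times\U)\ne\emptyset$ makes $\varphi\not\equiv+\infty$, so $\varphi^*$ never equals $-\infty$; hence on $\X_a^\perp$ we may assume $Ef^*(p,y)\in\reals$, and then $f^*(p,y)\in L^1$.

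Fix such $(p,y)$. Writing out the conjugate and substituting $w:=x-z\in\N$ gives
\[
\bar\varphi^*(p,y)=\sup\{\langle z,p\rangle+\langle u,y\rangle-Ef(z+w,u)\mid z\in\X,\ u\in\U,\ w\in\N\},
\]
so it is enough to bound each term by $Ef^*(p,y)$. Terms with $Ef(z+w,u)=+\infty$ contribute $-\infty$, so assume $Ef(z+w,u)<\infty$. Fenchel's inequality for the normal integrand $f$ (conjugated in both arguments) gives, almost surely,
\[
f(z+w,u)+f^*(p,y)\ge(z+w)\cdot p+u\cdot y.
\]
Since $f(z+w,u)^+$ and $f^*(p,y)^+$ are integrable while $z\cdot p,u\cdot y\in L^1$, the positive part of $(z+w)\cdot p+u\cdot y$ is integrable, and hence so is $(w\cdot p)^+$. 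Granting the orthogonality claim below, namely $w\cdot p\in L^1$ with $E[w\cdot p]=0$, every term in the displayed Fenchel inequality is integrable (the minorant $f^*(p,y)\in L^1$ also forces $Ef(z+w,u)\in\reals$), so integration yields $Ef(z+w,u)+Ef^*(p,y)\ge\langle z,p\rangle+\langle u,y\rangle$. Rearranging and passing to the supremum over $(z,u,w)$ gives $\bar\varphi^*(p,y)\le Ef^*(p,y)=\varphi^*(p,y)$, as required.

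The orthogonality claim is the crux and the step I expect to be the main obstacle: for $w\in\N$ and $p\in\X_a^\perp$ with $(w\cdot p)^+\in L^1$ one has $w\cdot p\in L^1$ and $E[w\cdot p]=0$. For bounded $w$ this is just the definition of $\X_a^\perp=\N^\perp\cap\V$, but here $w$ is a possibly unbounded, vector-valued adapted process, so boundedness is unavailable. The mechanism is conditional: by \thref{lem:vperp}, $p\in\X_a^\perp$ is equivalent to $E_tp_t=0$ for each $t$; since $p_t\in\V_t\subseteq L^1$ is conditionally integrable and $w_t$ is $\F_t$-measurable, \thref{lem:ce} gives $E_t[w_t\cdot p_t]=w_t\cdot E_tp_t=0$ almost surely, whence $E_t[(w_t\cdot p_t)^+]=E_t[(w_t\cdot p_t)^-]$ and, after taking expectations, $(w_t\cdot p_t)^+\in L^1\iff(w_t\cdot p_t)^-\in L^1$ with vanishing mean. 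Equivalently, the partial sums $\sum_{s\le t}w_s\cdot p_s$ form a martingale, so one-sided integrability of the terminal value should already entail full integrability and $E[w\cdot p]=0$. The delicate bookkeeping is to pass from integrability of the aggregate $(w\cdot p)^+$ to that of the individual time steps despite possible cancellation across $t$; this is precisely the perpendicularity relation recorded in \thref{lem:perp}, which I would invoke (or reprove by the conditional argument just sketched) to close the gap.
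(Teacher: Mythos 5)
Your proof is correct and follows essentially the same route as the paper's: the easy inequality from $\bar\varphi\le\varphi$, then \thref{lem:dual} to reduce to $p\in\X_a^\perp$ with $Ef^*(p,y)$ finite, pointwise Fenchel inequality, and \thref{lem:perp} (via \thref{lem:vperp}) to annihilate $E[(x-z)\cdot p]$ before integrating. The only difference is that you spell out the integrability bookkeeping that the paper leaves implicit; invoking \thref{lem:perp} as a black box is exactly what the paper does, so no gap remains.
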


\begin{proof}
Since $\varphi\ge\bar\varphi$, we have $\varphi^*\le \bar\varphi^*$. To prove the converse, let $(p,y)\in\dom\varphi^*$. By \thref{lem:dual},
\[
\varphi^*(p,y)=Ef^*(p,y)+\delta_{\X_a^\perp}(p),
\]
so $p\in\X_a^\perp$. By Fenchel's inequality,
\begin{align*}
Ef(x,u)+\delta_\N(x-z)+Ef^*(p,y) \ge E[(x-z)\cdot p] + E[z\cdot p] + E[u \cdot y]
\end{align*}
for all $(x,z,u)\in L^0\times\X\times\U$, so \thref{lem:perp} below implies
\begin{align*}
Ef(x,u)+\delta_\N(x-z)+Ef^*(p,y) \ge E[z\cdot p] + E[u \cdot y].
\end{align*}
Thus $\bar\varphi(z,u)+\varphi^*(p,y) \ge \langle z,p\rangle+\langle u,y\rangle$
 for all $(z,u)\in\X\times \U$, which means that $\bar\varphi^*(p,y)\le \varphi^*(p,y)$.
\end{proof}

The above proof used the following from \cite{per16}; see also \cite{pp22}.
\begin{lemma}\thlabel{lem:perp}
Let $x\in\N$ and $v\in\N^\perp$. If $E[x\cdot v]^+\in L^1$, then $E[x\cdot v]=0$.
\end{lemma}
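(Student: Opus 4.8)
The plan is to reduce the claim to bounded adapted strategies, where it is immediate, and then remove the truncation using the one-sided integrability hypothesis together with the conditional-mean-zero property of $v$.

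First I would record the pointwise content of the hypothesis $v\in\N^\perp$. Exactly as in the proof of \thref{lem:vperp}, testing the pairing $\langle\,\cdot\,,v\rangle$ against indicators $1_Ae_i$ with $A\in\F_t$ and invoking \thref{lem:ce} shows that $v\in\N^\perp$ means precisely that $v\in L^1$ and $E_tv_t=0$ for $t=0,\ldots,T$. In particular, for any bounded adapted $w\in\N\cap L^\infty$ each product $w_t\cdot v_t$ is integrable and $E[w_t\cdot v_t]=E[w_t\cdot E_tv_t]=0$, so $E[w\cdot v]=\sum_{t=0}^TE[w_t\cdot v_t]=0$. This settles the bounded case and isolates the only real issue: passing to a general, possibly unbounded, $x\in\N$.

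The natural device is truncation. For $k\in\naturals$ put $x^k_t:=x_t1_{\{|x_t|\le k\}}$, so that $x^k\in\N\cap L^\infty$, the previous step gives $E[x^k_t\cdot v_t]=0$ for every $t$ and $k$, and $x^k\cdot v\to x\cdot v$ almost surely. The engine of the limit passage is the following one-period statement, which I would prove first: if $\xi$ is $\F_t$-measurable and $E_tv_t=0$ with $(\xi\cdot v_t)^+\in L^1$, then $\xi\cdot v_t\in L^1$ and $E[\xi\cdot v_t]=0$. Indeed, for each $k$ the variable $\xi^k:=\xi1_{\{|\xi|\le k\}}$ is bounded and $\F_t$-measurable, so $\xi^k\cdot v_t\in L^1$ and $E[\xi^k\cdot v_t]=0$, which rearranges to $E[(\xi\cdot v_t)^+1_{\{|\xi|\le k\}}]=E[(\xi\cdot v_t)^-1_{\{|\xi|\le k\}}]$. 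Letting $k\uparrow\infty$ and applying monotone convergence to both sides, finiteness of the positive part forces finiteness of the negative part and equality of the two means; hence $\xi\cdot v_t\in L^1$ with zero mean. This is exactly the mechanism that rules out the naive ``cancellation'' obstruction: a summand $x_t\cdot v_t$ that is almost surely of one sign and conditionally mean zero must vanish.

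Finally I would assemble the multiperiod statement by induction on the horizon $T$, the one-period result being both the base case $T=0$ and the step's workhorse. For the step I would condition on $\F_0$: since $E_0v_0=0$, every truncated partial sum satisfies $E[x^k\cdot v\mid\F_0]=0$, and I would use the one-period lemma to extract integrability of the first summand $x_0\cdot v_0$ and reduce the remaining sum $\sum_{t=1}^Tx_t\cdot v_t$ to an instance of the claim over the shifted filtration $(\F_t)_{t=1}^T$. Granting that, each block is integrable with zero conditional mean, and summing with the integral convention of the introduction yields $E[x\cdot v]=0$. The hard part is precisely this reduction: the hypothesis controls only the positive part of the \emph{whole} sum, not of its individual blocks, so one must show that the one-sided bound is inherited by the pieces. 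This is where the adaptedness structure $E_tv_t=0$ is essential---it is what prevents a non-integrable term from being silently compensated by later terms---and carrying it through the conditioning is the main technical obstacle; I expect to handle it by combining the one-period monotone-convergence argument with conditional (rather than unconditional) expectations at each stage, as is done in \cite{per16}.
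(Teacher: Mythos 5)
Your first two steps are sound: the identification of $\N^\perp$ with $\{v\in L^1\mid E_tv_t=0,\ t=0,\ldots,T\}$ is correct, and your one-period truncation/monotone-convergence argument is complete and correct. The proposal breaks down exactly where you put the weight of the multiperiod induction: on the claim that the one-sided integrability of the whole sum is ``inherited by the pieces'', i.e.\ that $(x\cdot v)^+\in L^1$ lets you extract (unconditional) integrability of $x_0\cdot v_0$ and of the remaining block. That intermediate claim is false, not merely unproven. Take $\Omega=\naturals\times\{-1,+1\}$ with $P(\{(n,\epsilon)\})=2^{-n-1}$, let $\F_0=\F_1=\sigma(n)$, and set $\xi(n,\epsilon):=2^n$, $\theta(n,\epsilon):=\epsilon$. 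Then $x=(\xi,-\xi)\in\N$ and $v=(\theta,\theta)\in\N^\perp$ (it is bounded and $E_tv_t=E[\theta\mid\sigma(n)]=0$), and $x\cdot v=\xi\theta-\xi\theta=0$, so $(x\cdot v)^+\in L^1$ trivially; yet $x_0\cdot v_0=\xi\theta$ has $E[(\xi\theta)^+]=E[(\xi\theta)^-]=+\infty$. So a non-integrable summand \emph{can} be silently compensated by a later one --- exactly the phenomenon you asserted the structure $E_tv_t=0$ rules out --- and your induction step can never get started: neither the first summand nor the tail satisfies the hypothesis of your one-period lemma or of the induction hypothesis. (If you want $\F_T=\F$, append a final period with $v_T=0$.) What survives of your instinct is only a conditional statement: $E[|x_t\cdot v_t|\mid\F_t]=|x_t|\,E[|v_t|\mid\F_t]<\infty$ almost surely, so each block is \emph{conditionally} integrable with zero conditional mean, but this cannot be integrated back without further structure.

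The repair is to truncate \emph{before} decomposing, using a single $\F_0$-measurable indicator acting on the whole process rather than your componentwise truncation $x_t1_{\{|x_t|\le k\}}$. For $\nu\in\naturals$ put $B^\nu:=\{|x_0|\le\nu\}\in\F_0$; then $x1_{B^\nu}\in\N$, its time-$0$ term $x_01_{B^\nu}\cdot v_0$ is integrable with zero mean, and the tail $\sum_{t\ge 1}x_t1_{B^\nu}\cdot v_t=(x\cdot v)1_{B^\nu}-x_01_{B^\nu}\cdot v_0$ now \emph{does} inherit an integrable positive part, so the induction hypothesis over the filtration $(\F_t)_{t=1}^T$ applies to it and gives $E[(x\cdot v)1_{B^\nu}]=0$, i.e.\ $E[(x\cdot v)^+1_{B^\nu}]=E[(x\cdot v)^-1_{B^\nu}]<\infty$. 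Because the truncation enters as a common indicator multiplying $x\cdot v$, monotone convergence applies to both sides as $\nu\uparrow\infty$ and yields $E[(x\cdot v)^+]=E[(x\cdot v)^-]<\infty$, hence $E[x\cdot v]=0$. This limit passage is precisely what fails for your $x^k$: the random variables $(x^k\cdot v)^{\pm}$ are neither monotone in $k$ nor dominated by anything integrable. Note finally that the paper does not prove the lemma at all --- it quotes it from \cite{per16} --- and that your one-period argument is exactly the base case $T=0$ of the argument sketched above, which is why that part of your proposal is correct as written.
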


Note that, if $\dom Ef\cap(\X\times\U)\ne\emptyset$ and $\varphi^*$ is proper, then $Ef$ is proper on $\X\times\U$.

\begin{corollary}\thlabel{varphibar2}
We have $\partial\varphi(z,u)=\partial \bar\varphi(z,u)$ whenever the left side is nonempty. In particular, if $\partial\varphi(0,\bar u)\ne\emptyset$, then
\[
\inf\eqref{sp}=\inf\eqref{spx}=\sup\eqref{d}
\]
and the dual optimum is attained.
\end{corollary}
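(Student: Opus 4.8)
The plan is to deduce everything from the Fenchel--Young characterization of subgradients combined with \thref{varphibar}. Recall that for a convex function a pair $(p,y)$ lies in the subdifferential at $(z,u)$ precisely when the function is finite there and the Fenchel equality $\varphi(z,u)+\varphi^*(p,y)=\langle z,p\rangle+\langle u,y\rangle$ holds, and likewise for $\bar\varphi$. So the whole first assertion reduces to showing that, at any point where $\partial\varphi$ is nonempty, both the values and the conjugates of $\varphi$ and $\bar\varphi$ agree.

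First I would note that $\partial\varphi(z,u)\ne\emptyset$ forces $\varphi(z,u)$ to be finite, which means there is some $x\in\X$ with $x-z\in\N$ and $Ef(x,u)<\infty$; in particular $\dom Ef\cap(\X\times\U)\ne\emptyset$, so \thref{varphibar} applies and gives $\varphi^*=\bar\varphi^*$ everywhere. This disposes of half the work: the conjugates coincide identically, so only the equality of values at the point $(z,u)$ remains.

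For the values, fix $(p,y)\in\partial\varphi(z,u)$. The Fenchel equality for $\varphi$, together with $\varphi^*=\bar\varphi^*$ and the Fenchel inequality $\bar\varphi(z,u)+\bar\varphi^*(p,y)\ge\langle z,p\rangle+\langle u,y\rangle$, yields $\bar\varphi(z,u)+\varphi^*(p,y)\ge\varphi(z,u)+\varphi^*(p,y)$. Here $\varphi^*(p,y)$ is a finite real number, since it equals $\langle z,p\rangle+\langle u,y\rangle-\varphi(z,u)$, so it may be cancelled to give $\bar\varphi(z,u)\ge\varphi(z,u)$; combined with the trivial bound $\bar\varphi\le\varphi$ this forces $\bar\varphi(z,u)=\varphi(z,u)$. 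Now that both the values and the conjugates coincide, the Fenchel-equality criterion shows that a pair satisfies it for $\varphi$ if and only if it does for $\bar\varphi$, i.e.\ $\partial\varphi(z,u)=\partial\bar\varphi(z,u)$. The one point to watch is precisely the legitimacy of cancelling $\varphi^*(p,y)$, which is why establishing finiteness of $\varphi(z,u)$ at the outset is the crux of the argument.

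For the ``in particular'' clause, I would specialize to $(z,u)=(0,\bar u)$, where $\varphi(0,\bar u)=\inf\eqref{spx}$ and $\bar\varphi(0,\bar u)=\inf\eqref{sp}$. The first part then gives $\inf\eqref{sp}=\inf\eqref{spx}$. Finally, picking any $(p,y)\in\partial\varphi(0,\bar u)$ and invoking \thref{thm:duality2}, whose second condition holds since $\varphi(0,\bar u)$ is finite, shows that $(p,y)$ solves \eqref{d} with $\inf\eqref{spx}=\sup\eqref{d}$; chaining the two equalities produces $\inf\eqref{sp}=\inf\eqref{spx}=\sup\eqref{d}$ and the attainment of the dual optimum.
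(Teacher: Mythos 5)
Your proof is correct and takes essentially the same approach as the paper: both arguments rest on \thref{varphibar} (so that $\varphi^*=\bar\varphi^*$), deduce the pointwise identity $\varphi(z,u)=\bar\varphi(z,u)$ at points of subdifferentiability, conclude equality of subdifferentials from the Fenchel-equality characterization, and then invoke \thref{thm:duality2} for the absence of a duality gap and dual attainment. The only difference is cosmetic: where the paper gets $\varphi(z,u)=\bar\varphi(z,u)$ from the biconjugate theorem ($\cl\varphi=\cl\bar\varphi$) together with the sandwich $\varphi\ge\bar\varphi\ge\cl\varphi$, you obtain it by cancelling the finite value $\varphi^*(p,y)$ in Fenchel's inequality, which is the same closure argument unfolded at the single point.
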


\begin{proof}
If $\partial\varphi(z,u)\ne\emptyset$, we have $(z,u)\in\dom\varphi$ and thus, $\dom Ef\cap(\X\times\U)\ne\emptyset$. By the biconjugate theorem and \thref{varphibar}, $\cl\varphi=\cl\bar\varphi$. In particular, $\varphi\ge\bar\varphi\ge\cl\varphi$. When $\partial\varphi(z,u)\ne\emptyset$, we have $\varphi(z,u)=\cl\varphi(z,u)$ so $\bar\varphi(z,u)=\varphi(z,u)$ and thus, $\bar\varphi(z,u)+\bar\varphi^*(p,y)=\langle x,p\rangle+\langle u,y\rangle$ if and only if $\varphi(z,u)+\varphi^*(p,y)=\langle x,p\rangle+\langle u,y\rangle$. In other words, $(p,y)\in\partial\bar\varphi(z,u)$ if and only if $(p,y)\in\partial\varphi(z,u)$. The second claim follows from \thref{thm:duality2} and \thref{varphibar} since subdifferentiability implies closedness.
\end{proof}
By \thref{varphibar},
\[
\bar\varphi(0,\bar u)\ge\langle \bar u,y\rangle-\varphi^*(p,y)\quad\forall (p,y)\in\V\times\Y,
\]
where the right side is the dual objective from Section~\ref{sec:dual}. Thus, the optimal value of \eqref{sp} is always bounded from below by the dual objective so the duality gap between \eqref{sp}  and \eqref{d} is nonnegative. The duality gap is zero if and only if $\bar\varphi^{**}(0,\bar u)=\bar\varphi(0,\bar u)$. Thus, we have the following, which gives the analogue of the first equivalence in \thref{thm:duality1}.

\begin{theorem}\thlabel{thm:relduality1}
If $\dom Ef\cap(\X\times\U)\ne\emptyset$, then the following are equivalent,
\begin{enumerate}
\item
  $\inf\eqref{sp}=\sup\eqref{d}$,
\item
  $\bar \varphi$ is closed at $(0,\bar u)$. 
\end{enumerate}
\end{theorem}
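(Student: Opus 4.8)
The plan is to express both optima in terms of $\bar\varphi$ at the point $(0,\bar u)$ and thereby reduce the claimed equivalence to the elementary fact that a convex function equals its biconjugate at a point precisely when it is closed there. By the very definition of $\bar\varphi$ we have $\inf\eqref{sp}=\bar\varphi(0,\bar u)$, so the primal side requires no work and only the dual side must be rewritten.

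For the dual side, I would start from the fact that the dual objective is $(p,y)\mapsto\langle\bar u,y\rangle-\varphi^*(p,y)$, so that $\sup\eqref{d}=\sup_{(p,y)\in\V\times\Y}\{\langle\bar u,y\rangle-\varphi^*(p,y)\}$. The key step is to invoke \thref{varphibar}, which under the standing hypothesis $\dom Ef\cap(\X\times\U)\ne\emptyset$ yields $\varphi^*=\bar\varphi^*$. Substituting this and inserting the vanishing term $\langle 0,p\rangle$, the dual value becomes $\sup_{(p,y)}\{\langle 0,p\rangle+\langle\bar u,y\rangle-\bar\varphi^*(p,y)\}$, which is exactly $\bar\varphi^{**}(0,\bar u)$ by the definition of the conjugate of $\bar\varphi^*$ evaluated at $(0,\bar u)$. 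Hence $\sup\eqref{d}=\bar\varphi^{**}(0,\bar u)$.

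Combining the two identities, $\inf\eqref{sp}=\sup\eqref{d}$ holds if and only if $\bar\varphi(0,\bar u)=\bar\varphi^{**}(0,\bar u)$. Since $\bar\varphi$ is convex, being the infimal projection in $x$ of the jointly convex function $(x,z,u)\mapsto Ef(x,u)+\delta_\N(x-z)$, its biconjugate agrees with its lower semicontinuous hull, so this last equality is precisely the statement that $\bar\varphi$ is closed at $(0,\bar u)$, matching the convention already used for $\varphi$ in \thref{thm:duality1}.

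I expect the only delicate point to be the bookkeeping around the closure/biconjugate identification: one must make sure that $\cl\bar\varphi=\bar\varphi^{**}$ at $(0,\bar u)$ under the adopted convention (with $\cl\bar\varphi\equiv-\infty$ in the improper case), and that the hypothesis $\dom Ef\cap(\X\times\U)\ne\emptyset$ is exactly what licenses the application of \thref{varphibar}. Everything else is a direct rewriting of definitions, and indeed the discussion preceding the statement already records the crucial observation that the duality gap vanishes iff $\bar\varphi^{**}(0,\bar u)=\bar\varphi(0,\bar u)$, so the theorem mainly repackages that remark.
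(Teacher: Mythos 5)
Your proposal is correct and follows essentially the same route as the paper: the paper's (inline) argument also uses \thref{varphibar} to identify $\varphi^*=\bar\varphi^*$, observes that the dual value is then $\bar\varphi^{**}(0,\bar u)$, and concludes that the duality gap vanishes iff $\bar\varphi^{**}(0,\bar u)=\bar\varphi(0,\bar u)$, i.e.\ iff $\bar\varphi$ is closed at $(0,\bar u)$. Your extra remarks on the convexity of $\bar\varphi$ and the closure/biconjugate convention are exactly the bookkeeping the paper leaves implicit.
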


The following gives the analogue of the first equivalence in \thref{thm:duality2}.

\begin{theorem}\thlabel{thm:dualityrel2}
If $\dom Ef\cap(\X\times\U)\ne\emptyset$ and $\bar\varphi(0,u)<\infty$, then the following are equivalent
\begin{enumerate}
  \item
    $(p,y)$ solves \eqref{d} and $\inf\eqref{sp}=\sup\eqref{d}$,
  \item
    either $\bar\varphi(0,\bar u)=-\infty$ or $(p,y)\in\partial\bar\varphi(0,\bar u)$,
\end{enumerate}
\end{theorem}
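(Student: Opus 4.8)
The plan is to reduce the claim to the same conjugate-duality bookkeeping underlying \thref{thm:duality2}, now with $\bar\varphi$ in the role of $\varphi$. The one new ingredient is \thref{varphibar}: since $\dom Ef\cap(\X\times\U)\ne\emptyset$, we have $\varphi^*=\bar\varphi^*$, so the dual objective can be rewritten as $\langle\bar u,y\rangle-\varphi^*(p,y)=\langle\bar u,y\rangle-\bar\varphi^*(p,y)$. Maximizing over $(p,y)\in\V\times\Y$ then gives, directly from the definition of the biconjugate evaluated at the perturbation $(0,\bar u)$,
\[
\sup\eqref{d}=\bar\varphi^{**}(0,\bar u).
\]
Together with $\inf\eqref{sp}=\bar\varphi(0,\bar u)$, this identifies \eqref{d} as the dual problem associated with the value function $\bar\varphi$ in the framework of \cite{roc74}.

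First I would dispose of the case $\bar\varphi(0,\bar u)=-\infty$. Since $\bar\varphi^{**}\le\bar\varphi$ always holds, this forces $\sup\eqref{d}=\bar\varphi^{**}(0,\bar u)=-\infty$, so the dual objective equals $-\infty$ at every $(p,y)$. Hence every $(p,y)$ trivially solves \eqref{d} and $\inf\eqref{sp}=\sup\eqref{d}=-\infty$, so condition~1 holds for every $(p,y)$; this matches condition~2, which also holds for every $(p,y)$ through its first alternative.

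In the remaining case $\bar\varphi(0,\bar u)$ is finite, and I would argue, for a fixed $(p,y)$, through
\[
(p,y)\in\partial\bar\varphi(0,\bar u)\iff\bar\varphi(0,\bar u)+\bar\varphi^*(p,y)=\langle\bar u,y\rangle\iff\langle\bar u,y\rangle-\bar\varphi^*(p,y)=\bar\varphi(0,\bar u),
\]
the first step being the Fenchel-equality characterization of the subdifferential at a point where $\bar\varphi$ is finite. The right-hand identity says that the dual objective attains the value $\bar\varphi(0,\bar u)$ at $(p,y)$. Since weak duality gives, for every $(p,y)$,
\[
\langle\bar u,y\rangle-\bar\varphi^*(p,y)\le\sup\eqref{d}=\bar\varphi^{**}(0,\bar u)\le\bar\varphi(0,\bar u)=\inf\eqref{sp},
\]
attaining $\bar\varphi(0,\bar u)$ forces $(p,y)$ to maximize the dual objective and $\sup\eqref{d}=\inf\eqref{sp}$; that is, it is equivalent to condition~1. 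Combining the two cases yields the equivalence of conditions~1 and~2.

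The argument is short because \thref{varphibar} already carries the analytic weight. The only point requiring care is the corner case $\bar\varphi(0,\bar u)=-\infty$, where $\partial\bar\varphi(0,\bar u)=\emptyset$ yet condition~2 still holds through its first disjunct; one must check, as above, that condition~1 then holds vacuously for all $(p,y)$ rather than failing. Beyond this bookkeeping I anticipate no real obstacle.
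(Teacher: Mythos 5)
Your proof is correct and takes essentially the same route as the paper's: both rest on \thref{varphibar} to identify $\bar\varphi^*=\varphi^*$, so that by \thref{lem:dual} the dual objective is $\langle\bar u,y\rangle-\bar\varphi^*(p,y)$ and condition~2 becomes the Fenchel equality for $\bar\varphi$ at $(0,\bar u)$, which weak duality matches with condition~1. The paper's proof is just terser, leaving the case $\bar\varphi(0,\bar u)=-\infty$ and the weak-duality chain implicit, which you spell out correctly.
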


\begin{proof}
Condition 2 means that either $\bar\varphi(0,\bar u)=-\infty$  or $\bar\varphi(0,z)+\bar\varphi^*(p,y)=\langle\bar  u,y\rangle$, where, by \thref{varphibar}, $\bar\varphi^*=\varphi^*$. Thus the claim follows from \thref{lem:dual}. 
\end{proof}

\begin{theorem}\thlabel{thm:kktr}
If $\dom Ef\cap(\X\times\U)\ne\emptyset$ and \eqref{sp} and \eqref{d} are feasible, then following are equivalent
\begin{enumerate}
\item
$x$ solves \eqref{sp}, $(p,y)$ solves \eqref{d} and $\inf\eqref{sp}=\sup\eqref{d}$,
\item
$x$ is feasible in \eqref{sp}, $(p,y)$ is feasible in \eqref{d} and
\begin{equation}\label{eq:kkt}
(p,y)\in\partial f(x,\bar u)\quad P\text{-a.s.}
\end{equation}
\item
$x$ is feasible in \eqref{sp}, $(p,y)$ is feasible in \eqref{d}  and
\begin{align*}
p\in\partial_x l(x,y),\quad \bar u\in\partial_y[-l](x,y)\quad P\text{-a.s.}
\end{align*}
\end{enumerate}
\end{theorem}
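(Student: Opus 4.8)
The plan is to establish $1\Leftrightarrow 2$ directly from Fenchel's inequality and $2\Leftrightarrow 3$ from scenariowise subdifferential calculus. The argument parallels the proof of \thref{thm:duality} for \eqref{spx}; the one genuinely new point is that the feasible $x$ now ranges over $\N\subseteq L^0$ rather than over $\X_a=\X\cap\N$, so the pairing $\langle x,p\rangle$ is not defined a priori. This is precisely where \thref{lem:perp} is needed.

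First I would unpack the feasibility hypotheses. Feasibility of $x$ in \eqref{sp} means $x\in\N$ with $Ef(x,\bar u)<\infty$, whence $f(x,\bar u)^+\in L^1$; feasibility of $(p,y)$ in \eqref{d} means $(p,y)\in\X_a^\perp\times\Y$ with $Ef^*(p,y)<\infty$, whence $f^*(p,y)^+\in L^1$ and, by \thref{lem:vperp}, $p\in\N^\perp$; and $\bar u\cdot y\in L^1$ since $\bar u\in\U$, $y\in\Y$. Fenchel's inequality gives $x\cdot p\le f(x,\bar u)+f^*(p,y)-\bar u\cdot y$ a.s., and the right-hand side is dominated by $f(x,\bar u)^+ + f^*(p,y)^+ + |\bar u\cdot y|\in L^1$, so $(x\cdot p)^+\in L^1$. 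Since $x\in\N$ and $p\in\N^\perp$, \thref{lem:perp} then yields $E[x\cdot p]=0$, and in particular $x\cdot p\in L^1$. This identity is the crux: it lets me treat $\langle x,p\rangle$ as $0$ even though $x$ lies only in $L^0$.

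With $E[x\cdot p]=0$ in hand, $1\Leftrightarrow 2$ is a Fenchel-equality argument. Using the explicit form of the dual from \thref{thm:dualproblem} and the weak duality $\inf\eqref{sp}\ge\sup\eqref{d}$ recorded before \thref{thm:relduality1} (both relying on $\dom Ef\cap(\X\times\U)\ne\emptyset$ through \thref{lem:dual} and \thref{varphibar}), condition 1 is equivalent to $x$ and $(p,y)$ being feasible together with $Ef(x,\bar u)+Ef^*(p,y)=\langle\bar u,y\rangle$. Because the positive parts of $f(x,\bar u)$ and $f^*(p,y)$ are integrable, the integral is additive, so this says that the nonnegative integrand $f(x,\bar u)+f^*(p,y)-x\cdot p-\bar u\cdot y$ has zero expectation and hence vanishes a.s.; that is exactly $(p,y)\in\partial f(x,\bar u)$ a.s. Conversely, if $(p,y)\in\partial f(x,\bar u)$ a.s. with both feasibilities, integrating the pointwise Fenchel equality and substituting $E[x\cdot p]=0$ gives $Ef(x,\bar u)=\langle\bar u,y\rangle-Ef^*(p,y)$; the chain $\inf\eqref{sp}\ge\sup\eqref{d}\ge\langle\bar u,y\rangle-Ef^*(p,y)=Ef(x,\bar u)\ge\inf\eqref{sp}$ then collapses to equalities, so $x$ solves \eqref{sp}, $(p,y)$ solves \eqref{d}, and there is no gap.

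Finally, $2\Leftrightarrow 3$ is purely scenariowise and involves no function spaces: for a.e.\ $\omega$, closedness of $f(\cdot,\cdot,\omega)$ makes $(p,y)\in\partial f(x,\bar u)$ equivalent to $p\in\partial_x l(x,y)$ and $\bar u\in\partial_y[-l](x,y)$, which is exactly the pointwise equivalence proved in the last paragraph of \thref{lem:L} (take $v=0$ and $u=\bar u$ there). I expect the only delicate step to be the integrability bookkeeping around $E[x\cdot p]$: checking $(x\cdot p)^+\in L^1$ so that \thref{lem:perp} applies, and justifying additivity of the integral when the values may be $-\infty$.
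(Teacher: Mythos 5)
Your proposal is correct and follows essentially the same route as the paper's proof: Fenchel's inequality combined with \thref{lem:perp} to identify $E[x\cdot p]=0$ for the equivalence of 1 and 2, and the scenariowise closedness/properness of $f$ (Rockafellar's Theorem~37.5, which is what the last paragraph of \thref{lem:L} also invokes) for the equivalence of 2 and 3. Your treatment is in fact slightly more explicit than the paper's, since you verify the hypothesis $(x\cdot p)^+\in L^1$ needed to apply \thref{lem:perp} and the additivity of the expectations, which the paper leaves implicit.
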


\begin{proof}
The assumptions imply that $f$ is proper so the equivalence of 2 and 3 follows from \cite[Theorem~37.5]{roc70a}. Let $x\in \N$ and $(p,y)\in \V\times\Y$ be feasible. By Fenchel's inequality,
\[
f(x,u)+f^*(p,y)-\bar u\cdot y \ge x\cdot p\quad P\text{-a.s.}
\]
so 
\[
Ef(x,u)+E[f^*(p,y)-\bar u\cdot y]\ge E[x\cdot p]
\]
and one holds as an equality if and only if the other one does. Equality in the former means that 2 holds. By Lemma~\ref{lem:perp}, $E[x\cdot p]=0$, so equality in the latter means that 1 holds.
\end{proof}

If $\partial\bar \varphi(0,\bar u)\ne\emptyset$, then, by \thref{thm:dualityrel2}, $\inf\eqref{sp}=\sup\eqref{d}$ and the dual has a solution. \thref{thm:kktr} thus gives the following.

\begin{corollary}\thlabel{cor:kkt4}
If $\dom Ef\cap(\X\times\U)\ne\emptyset$ and $\partial\bar\varphi(0,\bar u)\ne\emptyset$, then $\inf\eqref{sp}=\sup\eqref{d}$, the optimum is attained in the dual and the following are equivalent,
\begin{enumerate}
\item\label{kkt:1}
$x$ solves \eqref{sp},
\item\label{kkt:2}
$x$ is feasible and there exists a dual feasible $(p,y)$ with $(p,y)\in\partial f(x,\bar u)$ almost surely.
\end{enumerate}
\end{corollary}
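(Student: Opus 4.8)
The plan is to read off both conclusions—absence of a duality gap with dual attainment, and the equivalence of (1) and (2)—directly from the two preceding results, \thref{thm:dualityrel2} and \thref{thm:kktr}, so that no fundamentally new argument is needed. First I would note that $\partial\bar\varphi(0,\bar u)\ne\emptyset$ forces $\bar\varphi(0,\bar u)$ to be finite; in particular $\bar\varphi(0,\bar u)<\infty$, so the hypothesis of \thref{thm:dualityrel2} holds and the degenerate alternative $\bar\varphi(0,\bar u)=-\infty$ is excluded. Picking any $(p,y)\in\partial\bar\varphi(0,\bar u)$ and invoking the implication $(2)\Rightarrow(1)$ of \thref{thm:dualityrel2} then yields at once that $(p,y)$ solves \eqref{d} and that $\inf\eqref{sp}=\sup\eqref{d}$. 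This establishes the first two assertions of the corollary.

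Next I would verify the feasibility hypotheses required by \thref{thm:kktr}. Dual feasibility is immediate, since the argument above produces a dual solution $(p,y)$. For primal feasibility, finiteness of $\bar\varphi(0,\bar u)=\inf\eqref{sp}$ guarantees the existence of some $x\in\N$ with $Ef(x,\bar u)<\infty$, so \eqref{sp} is feasible. Together with the standing assumption $\dom Ef\cap(\X\times\U)\ne\emptyset$, all hypotheses of \thref{thm:kktr} are in force.

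Finally I would derive the stated equivalence. For $(1)\Rightarrow(2)$: if $x$ solves \eqref{sp}, then combining this with the already-established dual solution $(p,y)$ and the equality $\inf\eqref{sp}=\sup\eqref{d}$ gives the first condition of \thref{thm:kktr}, whose second condition then delivers a dual feasible $(p,y)$ with $(p,y)\in\partial f(x,\bar u)$ almost surely; feasibility of $x$ is part of its solving \eqref{sp}. For $(2)\Rightarrow(1)$: feasibility of $x$ together with a dual feasible $(p,y)$ satisfying $(p,y)\in\partial f(x,\bar u)$ almost surely is precisely the second condition of \thref{thm:kktr}, whose first condition asserts in particular that $x$ solves \eqref{sp}.

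I do not anticipate a genuine obstacle, as the statement is essentially a bookkeeping consequence of the two cited theorems. The only point requiring care is the opening step, namely that a nonempty subdifferential excludes the value $-\infty$, i.e.\ that subgradients are taken only at points of finiteness, so that \thref{thm:dualityrel2} applies cleanly and the $-\infty$ branch never has to be treated separately.
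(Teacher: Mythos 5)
Your proposal is correct and takes essentially the same route as the paper: the paper's proof is exactly the observation that $\partial\bar\varphi(0,\bar u)\ne\emptyset$ allows one to invoke \thref{thm:dualityrel2} to obtain $\inf\eqref{sp}=\sup\eqref{d}$ together with dual attainment, after which \thref{thm:kktr} yields the stated equivalence. The only difference is that you spell out the bookkeeping the paper leaves implicit (finiteness of $\bar\varphi(0,\bar u)$, hence primal feasibility, and feasibility of the dual solution), which is consistent with the paper's convention that subdifferentials are nonempty only at points of finiteness.
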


If \eqref{sp} has a solution, $(p,y)$ is dual optimal and $\inf\eqref{sp}=\sup\eqref{d}$, then, by \thref{thm:kktr}, solutions of \eqref{sp} are scenariowise minimizers of the function 
\[
x\mapsto l(x,y)-x\cdot p
\]
and, in particular, if the scenariowise minimizer is unique, then it is necessarily adapted and solves \eqref{sp}.

\begin{cexample}\thlabel{ex:ceN}
It is possible that $\inf\eqref{sp}=\sup\eqref{d}$ while $\inf\eqref{spx}>\sup\eqref{d}$. Indeed, let
\[
f(x,u,\omega)=(x_0-1)^2+\delta_{\{0\}}(x_0\xi(\omega)-x_1),
\]
$\F_0$ be trivial and $\xi\in L^0(\F_1)$ with $\xi\notin \X$. Since $f$ is nonnegative, $(1,\xi)$ is optimal for \eqref{sp} and the optimal value is zero. Here $Ef$ is proper on $\X\times\U$, and, by a direct verification, $f^*(0,0)=0$, so the origin is a dual solution and $\inf\eqref{sp}=\sup\eqref{d}=0$. On the other hand, the only feasible solution of \eqref{spx} is the origin, so   $\inf\eqref{spx}=1$.
\end{cexample}

\begin{cexample}\thlabel{ex:spxna}
It may happen that \eqref{spx} is infeasible, but nevertheless, \eqref{sp} is feasible and $\inf\eqref{sp}=\sup\eqref{d}$. Indeed, let
\[
f(x,u,\omega)=\delta_{\{0\}}(x_T-u\xi(\omega)).
\]
If $\xi\notin\X$ and $\bar u=1$, then \eqref{sp} is feasible while \eqref{spx} is not. Clearly $Ef$ is proper on $\X\times\U$ and $f^*(0,0)=0$, so $\inf\eqref{sp}=\sup\eqref{d}=0$.
\end{cexample}

The optimum value of the dual problem
\begin{equation}\tag{$D$}
\maximize\quad \langle u,y\rangle - \varphi^*(p,y) \quad\over (p,y)\in\V\times\Y
\end{equation}
clearly coincides with that of
\begin{equation}\label{rd}\tag{$rD$}
\maximize\quad \langle u,y\rangle - g(y) \quad\over (p,y)\in\V\times\Y,
\end{equation}
where
\[
g(y):=\inf_{p\in\V} \varphi^*(p,y).
\]
Problem \eqref{rd} is called the {\em reduced dual problem}. A pair $(p,y)$ solves \eqref{d} if and only if $y$ solves \eqref{rd} and $p$ attains the infimum in the definition of $g$. In many applications, the infimum and the minimizing $p$ can be found analytically.

\section{Absence of a duality gap}\label{sec:adg}

This section recalls the main result of \cite{per16} on the lower semicontinuity of the optimum value function of \eqref{sp}. As we have seen, the lower semicontinuity implies the absence of a duality gap. Besides the lower semicontinuity, \thref{thm:varphi} below establishes the existence of optimal solutions to \eqref{sp}.

\begin{assumption}\thlabel{ass:adg}
\eqref{sp} is feasible, 
\[
\{x\in\N\mid f^\infty(x,0)\le 0\}
\]
is a linear space and there exists $p\in\X_a^\perp$ and $\epsilon>0$ such that
\[
\inf_{y\in\Y} Ef^*(\lambda p,y)<\infty
\]
for $\lambda\in[1-\epsilon,1+\epsilon]$.
\end{assumption}

The linearity condition in \thref{ass:adg} holds trivially if $f(\cdot,0)$ is inf-compact since then, its recession function is strictly positive except at the origin; see \cite[Theorem~8.6]{roc70a}. If $f$ is bounded from below by an integrable random variable, then $Ef^*(0,0)<\infty$ so the second condition in \thref{ass:adg} holds. The second condition holds also e.g.\ if $\dom Ef^*\cap(\V\times\Y)$ is a nonempty cone. In certain models of financial mathematics, it is implied by the well-known asymptotic elasticity conditions on the utility function; see \cite[Section~5.5]{pp22}.


\begin{theorem}\thlabel{thm:varphi}
Under \thref{ass:adg}, the function
\[
\bar\varphi(z,u) = \inf_{x\in L^0}\{Ef(x,u)\mid x-z\in\N\}
\]
is lower semicontinuous on $\X\times \U$,
\[
\bar\varphi^\infty(z,u) = \inf_{x\in L^0}E \{f^\infty(x,u)\mid z-z\in\N\},
\]
and the infimums are attained for every $(z,u)\in\X\times\U$.
\end{theorem}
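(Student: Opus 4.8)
The plan is to treat $\bar\varphi$ as the value function obtained by projecting out the $x$-variable from the jointly closed integrand $(x,z,u)\mapsto Ef(x,u)+\delta_\N(x-z)$, and to establish lower semicontinuity and attainment together by a scenariowise compactness argument; the recession formula then follows by commuting the horizon operation with the partial minimization.

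First I would reduce lower semicontinuity to a sequential statement. By \thref{lem:rel}, convergence in $\X\times\U$ is stronger than convergence in $L^0$, so it suffices to take $(z^\nu,u^\nu)\to(z,u)$ with $u^\nu\to u$ and $z^\nu\to z$ almost surely (after passing to a subsequence) and to show $\bar\varphi(z,u)\le\liminf_\nu\bar\varphi(z^\nu,u^\nu)$. Passing to a further subsequence I may assume the $\liminf$ is a finite limit and choose $x^\nu\in L^0$ with $x^\nu-z^\nu\in\N$ and $Ef(x^\nu,u^\nu)\to\liminf_\nu\bar\varphi(z^\nu,u^\nu)$. The dual condition in \thref{ass:adg} supplies, through Fenchel's inequality applied to a pair $(\lambda p,y)$ with $Ef^*(\lambda p,y)<\infty$, an integrable lower bound of the form $f(x^\nu,u^\nu)\ge x^\nu\cdot\lambda p-f^*(\lambda p,y)$ together with the superlinear growth in the $x$-direction needed to control the sequence.

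The crux is the extraction of a limiting strategy. Using that $\{x\in\N\mid f^\infty(x,0)\le0\}$ is a linear space contained in $\N$, I would subtract from each $x^\nu$ its (scenariowise, measurably selected) component in this lineality space; since that space is linear and lies in $\N$, this leaves both the constraint $x^\nu-z^\nu\in\N$ and the objective value $Ef(x^\nu,u^\nu)$ unchanged, while rendering the modified sequence pointwise bounded almost surely, because off the lineality subspace one has $f^\infty(\cdot,0)>0$. A Koml\'os / Ces\`aro argument, together with the measurable-selection machinery of \cite[Chapter~14]{rw98}, then produces convex combinations converging almost surely to some $\bar x\in L^0$ with $\bar x-z\in\N$, the latter because $\N$ is closed in $L^0$. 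Fatou's lemma, legitimate by the integrable lower bound just obtained, gives $Ef(\bar x,u)\le\liminf_\nu Ef(x^\nu,u^\nu)$, and hence $\bar\varphi(z,u)\le\liminf_\nu\bar\varphi(z^\nu,u^\nu)$. Applying the identical extraction to a minimizing sequence for $\bar\varphi(z,u)$ at a fixed $(z,u)$ yields attainment of the infimum.

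With closedness and attainment in hand, the recession formula follows from the standard identity expressing $\bar\varphi^\infty$ as the horizon limit of $\bar\varphi$, by interchanging that limit with the partial minimization over $x$; this interchange is licensed precisely by the linearity of the recession lineality space, and the scenariowise horizon of the integrand contributes $f^\infty$ inside the integral. The main obstacle throughout is the scenariowise compactness step, that is, the closedness of the image of a closed convex set under the linear projection that deletes $x$. In finite dimensions this is Rockafellar's criterion that the image of a closed convex set under a linear map is closed once the only recession directions annihilated by the map lie in a lineality subspace, cf.\ \cite[Theorem~9.1]{roc70a}; lifting it to integral functionals over the non-anticipativity subspace $\N$ with only the weak topologies available on $\X$ and $\U$ is the demanding part and is exactly the content of \cite{per16}, where the dual condition of \thref{ass:adg} furnishes the required coercivity.
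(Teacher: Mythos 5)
Your overall strategy---re-deriving the compactness machinery of \cite{per16} directly for the two-parameter value function---is not the paper's route, and as sketched it has genuine gaps. The paper's proof is a short reduction: write $\bar\varphi(z,u)=\inf_{x\in\N}E\bar f(x,z,u)$ with $\bar f(x,z,u,\omega):=f(x+z,u,\omega)$, so that \emph{both} perturbations $(z,u)$ enter the integrand, and then invoke the main result of \cite{per16} as a black box, verifying its two hypotheses from \thref{ass:adg} via the identities $\bar f^\infty(x,z,u,\omega)=f^\infty(x+z,u,\omega)$ and $\bar f^*(v,p,y,\omega)=f^*(v,y,\omega)+\delta_{\{0\}}(p-v)$. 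This change of variables is not cosmetic: the result you ultimately defer to concerns value functions whose parameter perturbs the integrand, whereas your $z$ perturbs the adaptedness constraint $x-z\in\N$; without absorbing $z$ into the integrand (and checking the dual condition for the shifted integrand, which is exactly the $\delta_{\{0\}}(p-v)$ computation), citing \cite{per16} does not cover the joint lower semicontinuity, attainment and recession statements being claimed.

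The step in your own sketch that would fail is the lineality reduction. \thref{ass:adg} says that $\{x\in\N\mid f^\infty(x,0)\le 0\}$ is a linear space; this is a subspace of the space of \emph{adapted processes}, not a scenariowise random subspace of $\reals^n$. The scenariowise lineality spaces of the cones $\{x\in\reals^n\mid f^\infty(x,0,\omega)\le 0\}$ are in general only $\F$-measurable, so the ``scenariowise, measurably selected component'' you propose to subtract is typically \emph{not adapted}: subtracting it destroys the constraint $x^\nu-z^\nu\in\N$. Conversely, subtracting an element of the assumed-linear space does preserve the constraint and the objective, but such elements are processes, and subtracting them gives no pointwise almost-sure bound. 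Reconciling these two requirements (by a recursive, time-by-time construction with conditional arguments) is precisely the technical content of \cite{per16} and its predecessors, so it cannot simultaneously be waved through in two lines and then cited as ``the demanding part.'' Relatedly, the dual condition in \thref{ass:adg} does not provide ``superlinear growth in the $x$-direction'': it provides integrable minorants $f(x,u)\ge \lambda x\cdot p+u\cdot y_\lambda-f^*(\lambda p,y_\lambda)$ for $\lambda$ on \emph{both} sides of $1$, which, combined with \thref{lem:perp}, is what controls the terms $E[x^\nu\cdot p]$; since your Fatou lower bound varies with $\nu$ through $x^\nu\cdot\lambda p$, that step is also unjustified as written.
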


\begin{proof}
We have
\[
\bar\varphi(z,u) = \inf_{x\in\N}Ef(x+z,u)=\inf_{x\in\N}E\bar f(x,z,u),
\]
where $\bar f(x,z,u,\omega)=f(x+z,u,\omega)$. The claim thus follows from the main result of \cite{per16} as soon as
\[
\{x\in\N\mid \bar f^\infty(x,0,0)\le 0\} 
\]
is linear and there exists $p\in \X^\perp_a$ and $\epsilon>0$
\[
\inf_{(p',y)\in\V\times\Y} E\bar f^*(\lambda p, p',y)<\infty.
\]
Since $\bar f^\infty(x,z,u,\omega)=f^\infty(x+z,u,\omega)$, the former is clear from the linearity condition in \thref{ass:adg}.  We have 
\[
\bar f^*(v,p,y,\omega)=f^*(v,y,\omega)+\delta_{\{0\}}(p-v)
\]
so the latter follows from \thref{ass:adg} as well.
\end{proof}

Combining the above with \thref{thm:relduality1} gives the following.

\begin{corollary}
Under \thref{ass:adg}, $\inf\eqref{sp}=\sup\eqref{d}$, and \eqref{sp} has a solution.
\end{corollary}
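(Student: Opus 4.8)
The plan is to deduce the corollary directly from the two results it cites, namely \thref{thm:varphi} and \thref{thm:relduality1}, so the proof should be very short. First I would invoke \thref{thm:varphi}: under \thref{ass:adg}, the optimum value function $\bar\varphi$ is lower semicontinuous on $\X\times\U$ and the infimum defining $\bar\varphi(z,u)$ is attained for every $(z,u)$. In particular, taking $(z,u)=(0,\bar u)$ gives that $\bar\varphi$ is lower semicontinuous at $(0,\bar u)$, which is exactly the closedness-at-$(0,\bar u)$ hypothesis (condition 2) of \thref{thm:relduality1}, and it gives that the infimum $\bar\varphi(0,\bar u)=\inf\eqref{sp}$ is attained, i.e.\ \eqref{sp} has a solution.

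Next I would apply \thref{thm:relduality1}. That theorem requires the mild nonemptiness condition $\dom Ef\cap(\X\times\U)\ne\emptyset$; I would note this is implied by \thref{ass:adg}, since feasibility of \eqref{sp} together with the second (dual) condition forces $Ef$ to be proper and hence $\dom Ef$ to meet $\X\times\U$ (one also sees directly that if $\bar\varphi$ is lower semicontinuous and $\bar\varphi(0,\bar u)<\infty$, which feasibility provides, then $\dom Ef\cap(\X\times\U)\ne\emptyset$). With that hypothesis in place and the closedness of $\bar\varphi$ at $(0,\bar u)$ just established, the equivalence in \thref{thm:relduality1} yields $\inf\eqref{sp}=\sup\eqref{d}$, the absence of a duality gap.

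Finally I would assemble the two conclusions: \thref{thm:relduality1} gives $\inf\eqref{sp}=\sup\eqref{d}$, and the attainment clause of \thref{thm:varphi} gives that \eqref{sp} has a solution. There is essentially no obstacle here; the only point requiring a sentence of care is verifying the domain hypothesis $\dom Ef\cap(\X\times\U)\ne\emptyset$ needed to quote \thref{thm:relduality1}, since \thref{ass:adg} states feasibility and a dual-side finiteness condition rather than this domain condition verbatim. I expect this to be routine, following from feasibility of \eqref{sp} combined with properness, which is exactly the content already used in the surrounding results.

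\begin{proof}
By \thref{thm:varphi}, \thref{ass:adg} implies that $\bar\varphi$ is lower semicontinuous on $\X\times\U$ and that the infimum defining $\bar\varphi(z,u)$ is attained for every $(z,u)\in\X\times\U$. In particular, $\bar\varphi$ is closed at $(0,\bar u)$ and the infimum $\bar\varphi(0,\bar u)=\inf\eqref{sp}$ is attained, so \eqref{sp} has a solution. Feasibility of \eqref{sp} gives $\dom Ef\cap(\X\times\U)\ne\emptyset$, so \thref{thm:relduality1} applies and the closedness of $\bar\varphi$ at $(0,\bar u)$ yields $\inf\eqref{sp}=\sup\eqref{d}$.
\end{proof}
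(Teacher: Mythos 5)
Your route is the paper's: \thref{thm:varphi} yields lower semicontinuity of $\bar\varphi$ on $\X\times\U$ and attainment of the infimum, which gives a solution of \eqref{sp} and closedness of $\bar\varphi$ at $(0,\bar u)$; \thref{thm:relduality1} then converts closedness into $\inf\eqref{sp}=\sup\eqref{d}$. The genuine gap is the step you call routine: the claim that feasibility of \eqref{sp} implies $\dom Ef\cap(\X\times\U)\ne\emptyset$. Feasibility of \eqref{sp} produces an adapted $x\in\N\subset L^0$ with $Ef(x,\bar u)<\infty$, and nothing places this $x$ in $\X$; the distinction between strategies in $L^0$ and strategies in $\X$ is exactly what Section~\ref{sec:rel} and \thref{ex:ceN,ex:spxna} are about. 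A concrete counterexample to your claim: let $f(x,u,\omega)=\delta_{\{\xi(\omega)\}}(x_T)$ with $\xi$ $\F_T$-measurable but $\xi\notin\X_T$ (e.g.\ $\X=L^\infty$ and $\xi$ unbounded). Then \eqref{sp} is feasible and all of \thref{ass:adg} holds with $p=0$ (here $f\ge 0$, $f^*(0,0)=0$ and $f^\infty(x,0)=\delta_{\{0\}}(x_T)$), yet $Ef\equiv+\infty$ on $\X\times\U$, so $\dom Ef\cap(\X\times\U)=\emptyset$. Your fallback arguments fail for the same reason: properness of $Ef$, and finiteness of the lower semicontinuous $\bar\varphi$ at $(0,\bar u)$, only involve points whose $x$-component lies in $L^0$, so neither produces a point of $\dom Ef$ whose $x$-component lies in $\X$.

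The missing hypothesis is not decorative. In the example just given, $F\equiv+\infty$ on $\X\times\X\times\U$, hence $F^*\equiv-\infty$ and $\sup\eqref{d}=+\infty$, while $\inf\eqref{sp}=0$: without $\dom Ef\cap(\X\times\U)\ne\emptyset$ the conclusion can genuinely fail for the dual as defined through $\varphi^*(p,y)=F^*(0,p,y)$, because that condition is what drives \thref{varphibar}, i.e.\ the identity $\varphi^*=\bar\varphi^*$ linking \eqref{sp} to \eqref{d}. To be fair, the paper's own proof is the one-liner ``combine \thref{thm:varphi} with \thref{thm:relduality1}'' and does not verify this hypothesis either; note, however, that in every application the paper lists $\dom Ef\cap(\X\times\U)\ne\emptyset$ as a separate assumption alongside feasibility (\thref{ass:mp4}, \thref{ass:oc4}, \thref{ass:l4}). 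The correct repair is therefore to add $\dom Ef\cap(\X\times\U)\ne\emptyset$ to the hypotheses (or to \thref{ass:adg}), not to derive it from feasibility --- it does not follow. With that hypothesis added, your two-step argument is exactly right.
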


\section{Applications}\label{sec:appdual}

This section applies the general duality result to specific instances of \eqref{sp}. In the following applications, we give more explicit expressions for the involved functions and conditions but only give selected statements as examples of how the general theory can be applied.

\subsection{Mathematical programming}\label{sec:mp3}

Consider the problem
\begin{equation}\label{mp}\tag{$MP$}
\begin{aligned}
&\minimize\quad & Ef_0(x)&\quad\ovr\ x\in\N,\\
  &\st\quad & f_j(x) &\le 0\quad j=1,\ldots,l\ a.s.,\\
   & & f_j(x) &= 0\quad j=l+1,\ldots,m\ a.s.
\end{aligned}
\end{equation}
where $f_j$ are convex normal integrands with $f_j$ affine for $j>l$. This fits the general duality framework with $\bar u=0$ and
\[
f(x,u,\omega) = 
\begin{cases}
  f_0(x,\omega) & \text{if $x\in\dom H,\ H(x)+u\in K$},\\
  +\infty & \text{otherwise},
\end{cases}
\]
where $K=\reals_-^l\times\{0\}$ and $H$ is the $K$-convex random function defined by
\[
\dom H(\cdot,\omega)=\bigcap_{j=1}^m\dom f_j(\cdot,\omega)\quad\text{and}\quad H(x,\omega)=(f_i(x,\omega))_{j=1}^m.
\]
The Lagrangian integrand becomes
\begin{align*}
l(x,y,\omega) &= \inf\{f(x,u,\omega)-u\cdot y\}\\
&=\begin{cases}
+\infty & \text{if $x\notin\dom H(\cdot,\omega)$},\\
f_0(x,\omega) + y\cdot H(x,\omega) & \text{if $x\in\dom H(\cdot,\omega)$ and $y\in K^*$},\\
-\infty & \text{otherwise}
\end{cases}
\end{align*}
and the conjugate of $f$
\begin{align*}
f^*(p,y)&=\sup_{x\in\reals^n}\{x\cdot p - l(x,y)\}\\
&=
\begin{cases}
  \sup_{x\in\reals^n}\{x\cdot p-f_0(x)-y\cdot H(x)\mid x\in\dom H(\cdot,\omega)\} & \text{if $y\in K^*$},\\
  +\infty & \text{if $y\not\in K^*$}.
\end{cases}
\end{align*}
If $\dom Ef\cap(\X\times\U)\ne\emptyset$, \thref{lem:dual} says that the dual problem can be written as
\begin{equation}\label{dmp}\tag{$D_{MP}$}
\begin{aligned}
  &\maximize\ E\inf_{x\in\reals^n}\{f_0(x)+y\cdot H(x)-x\cdot p\}\ \ovr (p,y)\in\X_a^\perp\times\Y\\
  &\st\qquad\qquad y\in K^*\quad a.s.
\end{aligned}
\end{equation}
To get more explicit expressions for $f^*$ and the dual problem, additional structure is needed; see \thref{ex:lp3} below.

Recall that the {\em normal cone} of a convex set $C$ at at point $x$ is given by
\[
N_C(x):=\{v\in\reals^n\mid (x'-x)\cdot v \le 0 \quad\forall x'\in C\}.
\]
When $C$ is a convex cone, then
\begin{equation}\label{eq:compl}
v\in N_C(x)\quad \Leftrightarrow\quad x\in X,\ v\in C^*,\  x\cdot v =0,
\end{equation}
where $C^*:=\{v\in\reals^n\mid x\cdot v\le 0\ \forall x\in C\}$ is the {\em polar cone} of $C$; see the end of \cite[Section~23]{roc70a}. \thref{thm:kktr} gives the following.

\begin{theorem}\thlabel{mp3}
If $\dom Ef\cap(\X\times\U)\ne\emptyset$ and \eqref{mp} and \eqref{dmp} are feasible, then the following are equivalent
\begin{enumerate}
\item
  $x$ solves \eqref{mp}, $(p,y)$ solves \eqref{dmp} and $\inf\eqref{mp}=\sup\eqref{dmp}$,
\item
  $x$ is feasible in \eqref{mp}, $(p,y)$ is feasible in \eqref{dmp} and
\begin{equation*}\label{mpkkt3}
  \begin{gathered}
  p\in\partial_x[f_0+y\cdot H](x),\\
  H(x)\in K,\quad y\in K^*,\quad y\cdot H(x)=0
  \end{gathered}
\end{equation*}
almost surely. 
\end{enumerate}
\end{theorem}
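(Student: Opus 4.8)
The plan is to specialize \thref{thm:kktr} to the present instance of \eqref{sp}, in which $\bar u=0$ and $f$ is the normal integrand exhibited above, and then to rewrite its abstract subdifferential conditions as the stated complementarity system. First I would match the feasibility hypotheses: since $\bar u=0$, feasibility of $x$ in \eqref{sp} amounts to $x\in\N$ with $Ef(x,0)<\infty$, which by the definition of $f$ means $x\in\N$ and $H(x)\in K$ almost surely, i.e.\ exactly feasibility in \eqref{mp}; and, as was used in deriving \eqref{dmp}, dual feasibility in \eqref{d} coincides with feasibility in \eqref{dmp}. Hence the hypotheses $\dom Ef\cap(\X\times\U)\ne\emptyset$ plus feasibility of both problems, and statement~1, transcribe verbatim, and it remains to show that condition~3 of \thref{thm:kktr}, which with $\bar u=0$ reads
\[
p\in\partial_x l(x,y),\qquad 0\in\partial_y[-l](x,y)\quad P\text{-a.s.},
\]
is equivalent to the complementarity conditions in statement~2 of \thref{mp3}.

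For the convex part, on the event $\{y\in K^*\}$ the integrand $l(\cdot,y,\omega)$ coincides with $f_0(\cdot,\omega)+y\cdot H(\cdot,\omega)$ on $\dom H(\cdot,\omega)$, so $\partial_x l(x,y)=\partial_x[f_0+y\cdot H](x)$, yielding the subgradient relation $p\in\partial_x[f_0+y\cdot H](x)$. For the concave part, I would read off from the explicit form of $l$ that, as a convex function of $y$,
\[
[-l](x,y)=-f_0(x)-y\cdot H(x)+\delta_{K^*}(y).
\]
Since $y\mapsto -y\cdot H(x)$ is finite and continuous everywhere, the sum rule gives $\partial_y[-l](x,y)=-H(x)+N_{K^*}(y)$, so the inclusion $0\in\partial_y[-l](x,y)$ is equivalent to $H(x)\in N_{K^*}(y)$.

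To finish I would apply the conic complementarity \eqref{eq:compl} to the closed convex cone $C=K^*$, using the bipolar identity $(K^*)^*=K$ (valid because $K=\reals_-^l\times\{0\}$ is a closed convex cone): the inclusion $H(x)\in N_{K^*}(y)$ holds if and only if $y\in K^*$, $H(x)\in K$, and $y\cdot H(x)=0$. Combined with the convex-part relation, this is precisely the system
\[
p\in\partial_x[f_0+y\cdot H](x),\quad H(x)\in K,\quad y\in K^*,\quad y\cdot H(x)=0
\]
of statement~2, completing the equivalence. The step needing the most care is the concave part: correctly computing $\partial_y[-l]$ (the sum rule and the emergence of the normal cone $N_{K^*}$) and converting the single inclusion $H(x)\in N_{K^*}(y)$ into the three scalar complementarity relations through the bipolar identity; one should also note that the condition $y\in K^*$ appears consistently both as dual feasibility and as part of the complementarity, so no separate verification is needed.
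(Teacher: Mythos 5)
Your proposal is correct and takes essentially the same route as the paper's proof: the paper likewise reduces everything to condition~3 of \thref{thm:kktr} and observes that $0\in\partial_y[-l](x,y)=-H(x)+N_{K^*}(y)$ holds if and only if $H(x)\in N_{K^*}(y)$, which is the stated complementarity system by \eqref{eq:compl}. Your write-up just spells out what the paper leaves implicit, namely the identification $\partial_x l(x,y)=\partial_x[f_0+y\cdot H](x)$ on $\{y\in K^*\}$ and the bipolar identity $(K^*)^*=K$.
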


\begin{proof}
It suffices to note that, when $(x,y)\in\dom l$, we have
\[
0\in\partial_y[-l](x,y) = -H(x) + N_{K^*}(y)
\]
if and only if $H(x)\in N_{K^*}(y)$.   This is equivalent with the given complementarity condition by \eqref{eq:compl}.
\end{proof}

\begin{assumption}\thlabel{ass:mp4}
\mbox{}
\begin{enumerate}
\item \eqref{mp} is feasible,
\item $\dom Ef\cap(\X\times\U)\ne\emptyset$,
\item $\{x\in\N\mid f^\infty_j(x)\le 0\ j=0,\dots l, f^\infty_j(x)=0\ j=l+1,\dots,m\}$ is a linear space,
\item  there exists a $p\in\X_a^\perp$ and an $\epsilon>0$ such that for all $\lambda\in(1-\epsilon,1+\epsilon)$ there exist a $y\in\Y$ and $\beta\in L^1$ such that $y\in K^*$ and 
\begin{equation*}
f_0(x,\omega)+y(\omega)\cdot H(x,\omega)\ge \lambda x\cdot p(\omega) - \beta(\omega)\quad\forall x\in\reals^n.
\end{equation*}
\end{enumerate}
\end{assumption}

\thref{thm:varphi,mp3} give the following.
\begin{theorem}\thlabel{thm:mp4}
Under \thref{ass:mp4}, $\inf\eqref{mp}=\sup\eqref{dmp}$ and $\eqref{mp}$ has a solution. In this case, a dual feasible $(p,y)$ solves \eqref{dmp} if and only if there exists a primal feasible $x$ with
\begin{equation*}\label{mpkkt3}
  \begin{gathered}
  p\in\partial_x[f_0+y\cdot H](x),\\
  H(x)\in K,\quad y\in K^*,\quad y\cdot H(x)=0
  \end{gathered}
\end{equation*}
almost surely.
\end{theorem}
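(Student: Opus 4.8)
The plan is to derive \thref{thm:mp4} as the specialization of \thref{thm:varphi} and \thref{mp3} to the concrete integrand $f$ defined for \eqref{mp}, exactly as the sentence preceding the statement advertises. The two conclusions---absence of a duality gap together with primal solvability, and the characterization of dual solutions via the scenariowise KKT system---come from these two results respectively, so the work is almost entirely a verification that the hypotheses of \thref{ass:adg} are met by the present $f$, after which the KKT characterization follows from the already-proved \thref{mp3}.

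First I would check that \thref{ass:mp4} implies \thref{ass:adg} for this $f$. Feasibility of \eqref{sp} is condition~1. The linearity of $\{x\in\N\mid f^\infty(x,0)\le 0\}$ should follow from condition~3 once I identify $f^\infty(\cdot,0)$: since $f(x,u,\omega)=f_0(x,\omega)+\delta_K(H(x,\omega)+u)$ with $H$ being $K$-convex and $f_j$ affine for $j>l$, the recession integrand $f^\infty(x,0,\omega)$ should encode precisely $f_0^\infty(x)\le 0$ together with the recession constraints $f_j^\infty(x)\le 0$ for $j\le l$ and $f_j^\infty(x)=0$ for $j>l$, so that the recession cone in \thref{ass:adg} coincides with the linear space in condition~3. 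The remaining, and most delicate, point is the second condition in \thref{ass:adg}, namely $\inf_{y\in\Y}Ef^*(\lambda p,y)<\infty$ for $\lambda$ near $1$. Using the explicit formula for $f^*$ computed just above \thref{mp3}, for $y\in K^*$ one has
\[
f^*(\lambda p,y,\omega)=\sup_{x\in\reals^n}\{\lambda x\cdot p(\omega)-f_0(x,\omega)-y(\omega)\cdot H(x,\omega)\},
\]
and the inequality in condition~4 of \thref{ass:mp4} says exactly that this supremum is bounded above by $\beta(\omega)\in L^1$ for a suitable dual-feasible $y$. Hence $Ef^*(\lambda p,y)\le E\beta<\infty$, giving the required finiteness of the infimum over $\Y$ for each such $\lambda$.

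With \thref{ass:adg} verified, \thref{thm:varphi} yields lower semicontinuity of $\bar\varphi$ and attainment of the infimum, and its corollary gives $\inf\eqref{mp}=\sup\eqref{dmp}$ together with existence of a primal solution; here I would note that $\eqref{dmp}$ is just the explicit form of \eqref{d} from \thref{lem:dual} specialized to this $f$, and that feasibility of \eqref{sp} plus the finiteness established above makes \eqref{dmp} feasible as well, so both problems sit in the scope of \thref{mp3}. Finally, since there is no duality gap and the dual optimum is attained, \thref{mp3} applies: a dual-feasible $(p,y)$ is optimal if and only if there is a primal-feasible $x$ for which $(x,p,y)$ satisfies the equivalence in \thref{mp3}, i.e.\ the stated relations $p\in\partial_x[f_0+y\cdot H](x)$ together with $H(x)\in K$, $y\in K^*$, $y\cdot H(x)=0$ almost surely.

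I expect the main obstacle to be the second, bullet just described---translating condition~4 of \thref{ass:mp4} into the finiteness requirement of \thref{ass:adg}. The subtlety is that \thref{ass:adg} asks for finiteness of $\inf_{y\in\Y}Ef^*(\lambda p,y)$ uniformly over an interval of $\lambda$, and one must supply, for each $\lambda$ in that interval, a single $y\in\Y$ (depending on $\lambda$) that is dual feasible, i.e.\ lies in $K^*$ almost surely, and makes the scenariowise conjugate integrable; condition~4 is phrased precisely to furnish such a $y$ and the majorant $\beta\in L^1$, so the argument reduces to reading off $f^*$ from its formula and invoking the bound, but care is needed to confirm that the $y\in K^*$ condition is exactly what makes $f^*(\lambda p,y)$ equal to the displayed supremum rather than $+\infty$. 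The identification of the recession integrand $f^\infty(\cdot,0)$ is the only other place requiring attention, and there I would lean on the $K$-convexity of $H$ and affineness of the equality constraints to match condition~3.
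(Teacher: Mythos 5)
Your proposal is correct and follows essentially the same route as the paper, whose proof is precisely ``\thref{thm:varphi} and \thref{mp3} give the result'': you verify that \thref{ass:mp4} implies \thref{ass:adg} (feasibility, recession-cone linearity via the identification of $f^\infty(\cdot,0)$, and finiteness of $Ef^*(\lambda p,y)$ from condition~4 and the explicit formula for $f^*$), then invoke \thref{thm:varphi} for the absence of a duality gap and primal attainment, and \thref{mp3} for the KKT characterization. One trivial remark: what the forward direction of the equivalence needs is primal attainment (pair the given dual solution with an existing primal solution), not dual attainment as you briefly state; your argument in fact uses exactly that, so nothing breaks.
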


 In case of linear stochastic programming, the dual can be written down explicitly in terms of the problem data.

\begin{example}[Linear stochastic programming]\thlabel{ex:lp3}
Consider the problem
\begin{alignat*}{2}
&\minimize\quad& E&[x\cdot c]\quad\ovr x\in\N\\
&\st\quad& &Ax-b\in K\quad \text{a.s.}
\end{alignat*}
and assume that there exists $(x,u)\in\X\times\U$ such that $E[x\cdot c]<\infty$ and $Ax+u-b\in K$ almost surely. The dual problem becomes
\begin{alignat*}{2}
&\minimize\quad& E&[b\cdot y]\quad\ovr\ p\in\X_a^\perp,\ y\in\Y,\\
&\st\quad& c+&A^*y = p,\ y\in K^* \quad \text{a.s.}
\end{alignat*}
and the scenariowise KKT-conditions
\begin{align*}
  \begin{gathered}
  A^*y+c=p,\\
  Ax-b \in K,\quad y\in K^*,\quad  (Ax-b)\cdot y=0,
  \end{gathered}
\end{align*}
where $A^*$ is the scenariowise transpose of $A$.

Indeed, this is a special case of \eqref{mp} with $f_0(x,\omega)=c(\omega)\cdot x$ and $f_j(x,\omega)=a_j(\omega)\cdot x-b_j(\omega)$ for $j=1,\ldots,m$. We get
\[
l(x,y,\omega) = x\cdot c(\omega) + y\cdot A(\omega)x - y\cdot b(\omega) - \delta_{K^*}(y)
\]
and
\begin{align*}
  f^*(p,y,\omega) &= \sup_{x\in\reals^n}\{x\cdot v - l(x,y,\omega)\}\\
  &=\begin{cases}
  y\cdot b(\omega) & \text{if $y\in K^*$ and $c(\omega)+A^*(\omega)y=p$},\\
  +\infty & \text{otherwise.}
  \end{cases}
\end{align*}
This gives the dual problem while the KKT conditions follow directly from \thref{mp3}. 
\end{example}

We will denote the {\em adapted projection} of an integrable process $u$ by
\[
\ap u:=(E_tu_t)_{t=0}^T.
\]

\begin{example}[Linear stochastic programming, reduced dual]
In the setting of \thref{ex:lp3} assume that $c\in\V$ and $A^*y\in\V$ for all $y\in\Y$. Then, a pair $(p,y)$ solves the dual if and only if $y$ solves the {\em reduced dual problem}
\begin{alignat*}{2}
&\minimize\quad& E&[b\cdot y]\quad\ovr\ y\in\Y,\\
&\st\quad& \ap(c+&A^*y) = 0,\ y\in K^*\quad \text{a.s.}
\end{alignat*}
and $p=c+A^*y-\ap(c+A^*y)$. If the elements of $c_t$  and the columns $A_t$ of $A$ corresponding to $x_t$ are $\F_t$-measurable, then the reduced dual can be written as
\begin{alignat*}{2}
&\minimize\quad& E&[b\cdot y]\quad\ovr\ y\in\Y,\\ 
&\st\quad& c_t+& A^*_t\cdot E_t y = 0\ t=0,\dots,T,\ y\in K^*\quad \text{a.s.}
\end{alignat*}

\end{example}
\begin{proof}
The first claim is clear and the second claim is a straightforward application of \thref{lem:ce}. 
\end{proof}

\subsection{Optimal stopping}\label{sec:os3}

Let $R$ be a real-valued adapted stochastic process and consider the {\em optimal stopping problem}
\begin{equation}\label{os}\tag{$OS$}
  \maximize\quad ER_\tau\quad\ovr \tau\in\T,
\end{equation}
where $\T$ is the set of {\em stopping times}, i.e.\ measurable functions $\tau:\Omega\to\{0,\ldots,T+1\}$ such that $\{\omega\in\Omega\mid \tau(\omega)\le t\}\in\F_t$ for each $t=0,\ldots,T$. Choosing $\tau=T+1$ is interpreted as not stopping at all. The problem
\begin{equation}\label{ros}\tag{$ROS$}
\begin{aligned}
&\maximize\quad & & E\sum_{t=0}^TR_t x_t\quad\ovr x\in\N,\\
&\st\quad & & x\ge 0,\ \sum_{t=0}^Tx_t\le 1\quad \text{a.s.}
\end{aligned}
\end{equation}
is the convex relaxation of \eqref{os} in sense that their optimal values coincide and the extreme points of the feasible set of \eqref{ros} can be identified with $\T$; see \cite[Section~5.2]{pp22}. 

Problem \eqref{ros} fits the general duality framework with $n_t=1$, $m=1$,
\[
f(x,u,\omega) =
\begin{cases}
-\sum_{t=0}^Tx_tR_t(\omega) & \text{if $x\ge 0$ and $\sum_{t=0}^Tx_t+u\le 0$},\\
+\infty & \text{otherwise}
\end{cases}
\]
and $\bar u=-1$. We get
\begin{align*}
l(x,y,\omega) &= \inf_{u\in\reals^n}\{f(x,u,\omega)-uy\}\\
  &= \inf_{u\in\reals^n}\{-\sum_{t=0}^Tx_tR_t(\omega) - uy \mid x\ge 0,\ \sum_{t=0}^Tx_t+u\le 0\}\\
&=
\begin{cases}
  -\sum_{t=0}^Tx_tR_t(\omega) + y\sum_{t=0}^Tx_t + \delta_{\reals^n_+}(x) & \text{if $y\ge 0$},\\
  -\infty & \text{otherwise}
\end{cases}\\
&=
\begin{cases}
  \sum_{t=0}^Tx_t[y-R_t(\omega)] + \delta_{\reals^n_+}(x) & \text{if $y\ge 0$},\\
  -\infty & \text{otherwise},
\end{cases}
\end{align*}
and
\begin{align*}
  f^*(p,y,\omega) &= \sup_x\{x\cdot p - l(x,y,\omega)\}\\
  &=\sup_{x\in\reals^n_+}\sum_{t=0}^Tx_t[p_t-y+R_t(\omega)] + \delta_{\reals_+}(y)\\
  &=
  \begin{cases}
    0 & \text{if $y\ge 0$ and $p_t+R_t(\omega)\le y,\ t=0,\ldots,T$},\\
    +\infty & \text{otherwise}.
  \end{cases}
\end{align*}
Since $\dom Ef\cap(\X\times\U)\ne\emptyset$, \thref{lem:dual} says that the dual of \eqref{ros} can be written as
\begin{equation}\label{dos}\tag{$D_{OS}$}
\begin{aligned}
&\minimize\quad Ey\quad\ovr(p,y)\in\X_a^\perp\times \Y_+\\
&\st\quad p_t+R_t\le y\quad t=0,\ldots,T\ \text{a.s.}
\end{aligned}
\end{equation}
It is clear that \eqref{ros} is feasible, and \eqref{dos} is feasible when the pathwise maximum $\max_t R_t$ belongs $\Y$.  \thref{thm:kktr} thus gives the following.

\begin{theorem}\thlabel{thm:os}
If $\max_t R_t\in \Y$, then the following are equivalent,
\begin{enumerate}
\item
  $x$ solves \eqref{ros}, $(p,y)$ solves \eqref{dos} and there is no duality gap.
\item
  $x\in\X_a$ and $(p,y)\in\X_a^\perp\times\Y$ and
\begin{gather*}
  x_t\ge 0,\ p_t+R_t\le y,\ x_t(p_t+R_t-y)=0\quad t=0,\ldots,T,\\
  y\ge 0,\ \sum_{t=0}^Tx_t\le 1,\ y(\sum_{t=0}^Tx_t-1)=0
\end{gather*}
almost surely.
\end{enumerate}
In particular, a stopping time $\tau\in\T$ is optimal in \eqref{os} and $(p,y)\in\X_a^\perp\times\Y$ solves the dual if and only if $p_t+R_t\le y$ for all $t$ and $p_\tau+R_\tau = y$ almost surely.
\end{theorem}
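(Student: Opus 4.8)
The plan is to obtain the theorem as a specialization of \thref{thm:kktr} to the optimal-stopping data, followed by a separate argument linking the relaxed problem \eqref{ros} back to the original \eqref{os}. First I would verify the hypotheses of \thref{thm:kktr}: we have already recorded that $\dom Ef\cap(\X\times\U)\ne\emptyset$ (take $x=0$, $u=0$), that \eqref{ros} is feasible, and that \eqref{dos} is feasible when $\max_t R_t\in\Y$ (take $p=0$ and $y=\max_t R_t$, which lies in $\Y_+$ by solidity). With these in place, \thref{thm:kktr} says that ``$x$ solves \eqref{ros}, $(p,y)$ solves \eqref{dos}, and there is no duality gap'' is equivalent to ``$x$ feasible, $(p,y)$ feasible, and $(p,y)\in\partial f(x,\bar u)$ a.s.'' with $\bar u=-1$.

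Next I would translate the scenariowise subgradient condition $(p,y)\in\partial f(x,\bar u)$ into the explicit complementarity system displayed in condition~2. Since $f$ is the indicator-type integrand written above and we have already computed $f^*$, the Fenchel equality $f(x,\bar u)+f^*(p,y)=x\cdot p+\bar u\,y$ holds iff both constraints in the primal and dual are active in the appropriate scenariowise sense. Concretely, feasibility of $x$ forces $x_t\ge 0$ and $\sum_t x_t\le 1$; dual feasibility forces $y\ge 0$ and $p_t+R_t\le y$; and the Fenchel equality, after substituting $f=-\sum_t x_tR_t$ on the feasible set and $f^*=0$ on its domain, collapses to $\sum_t x_t(p_t+R_t-y)+y(\sum_t x_t-1)\cdot(\text{sign})=0$. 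Because each term $x_t(p_t+R_t-y)$ is $\le 0$ and $y(\sum_t x_t-1)$ is $\le 0$ under feasibility, the sum vanishes iff each vanishes, yielding exactly the two complementarity lines in condition~2. This is the routine but bookkeeping-heavy step.

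The final ``in particular'' claim requires the bridge between \eqref{ros} and \eqref{os}. Here I would invoke the cited fact (\cite[Section~5.2]{pp22}) that the extreme points of the feasible polytope of \eqref{ros} correspond to stopping times: a stopping time $\tau$ corresponds to the process $x_t=\one_{\{\tau=t\}}$, which is adapted, nonnegative, and satisfies $\sum_t x_t=\one_{\{\tau\le T\}}\le 1$. Substituting this particular $x$ into the complementarity system, the conditions $x_t(p_t+R_t-y)=0$ become $p_\tau+R_\tau=y$ on $\{\tau\le T\}$, while $p_t+R_t\le y$ for all $t$ is retained as the dual feasibility inequality; the constraint involving $y(\sum_t x_t-1)$ is automatically handled since $y\ge 0$ and the slack $\sum_t x_t-1$ is either $0$ or $-1$. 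Care must be taken on the event $\{\tau=T+1\}$ (not stopping), where the corresponding statement degenerates, but this is consistent with the stated condition holding almost surely on the stopping event.

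The main obstacle I anticipate is not any single inequality but the careful scenariowise accounting in the second step: one must confirm that the Fenchel equality decomposes cleanly into the individual complementarity slackness conditions, using that each contributing term is signed under feasibility so that a vanishing sum forces each summand to vanish. A secondary subtlety is ensuring the passage to extreme points genuinely identifies optimal stopping times with optimal $x$ for \eqref{ros}, which I would take from the cited equivalence rather than re-prove here.
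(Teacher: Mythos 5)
Your proposal is correct and takes essentially the same route as the paper: both specialize \thref{thm:kktr} to the computed integrands, reduce the scenariowise optimality condition to the two complementarity-slackness lines (the paper by unpacking the Lagrangian/normal-cone form and invoking \eqref{eq:compl}, you by unpacking the equivalent Fenchel-equality form and using that a vanishing sum of nonpositive terms vanishes termwise), and obtain the stopping-time claim from the cited equivalence between \eqref{os} and \eqref{ros} in \cite[Section~5.2]{pp22}. One small correction to your last step: for the indicator process of $\tau$, the condition $y\left(\sum_{t=0}^Tx_t-1\right)=0$ is not ``automatically handled''---it forces $y=0$ on $\{\tau=T+1\}$, which is precisely what makes the statement ``$p_\tau+R_\tau=y$ a.s.'' consistent on the non-stopping event under the convention $R_{T+1}=p_{T+1}=0$.
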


\begin{proof}
The scenariowise KKT-condition in  \thref{thm:kktr} can be written as
\begin{align*}
  p_t+R_t-y&\in N_{\reals_+}(x_t)\quad t=0,\ldots,T,\\
  \sum_{t=0}^Tx_t-1&\in N_{\reals_+}(y),
\end{align*}
This is equivalent to the conditions given in the statement by \eqref{eq:compl}. The second claim thus follows from \thref{thm:duality,cor:kkt}. The last claim follows from the fact that a $\tau\in\T$ solves the optimal stopping problem \eqref{os} if and only if the process $x\in\X_a$ given by
\[
x_t=\begin{cases}
1 &\text{if $t=\tau$},\\
0 &\text{if $t\ne\tau$}
\end{cases}
\]
is optimal in \eqref{ros};  \cite[Section~5.2]{pp22}.
\end{proof}

\begin{example}[Reduced dual]
Assume that $E_t \Y\subseteq\Y\subseteq \V_t$ for all $t$. A pair $(p,y)\in \V\times \Y$ solves the dual problem if and only if $p_t=y-E_t y$ and the process $y_t:=E_ty$ solves the ``reduced dual''
\begin{align*}
&\minimize\quad Ey_0\quad\ovr y\in\M^\Y_+\\
&\st\quad R_t\le y_t\quad t=0,\ldots,T\ \text{a.s.},
\end{align*}
where $\M^\Y_+$ is the cone of nonnegative martingales $y$ with $y_T\in\Y$. Moreover, $x\in\X_a$ and $y\in\Y$ are primal and dual optimal, respectively, if and only if 
\begin{gather*}
  x_t\ge 0,\ R_t\le y_t,\ x_t(R_t-y_t)=0\quad t=0,\ldots,T,\\
  y_T\ge 0,\ \sum_{t=0}^Tx_t\le 1,\ y_T(\sum_{t=0}^Tx_t-1)=0.
\end{gather*}
In particular, a stopping time $\tau\in\T$ is optimal in \eqref{os} and $y\in\M^\Y_+$ solves the relaxed dual if and only if $R_t\le y_t$ for all $t$ and $R_\tau=y_\tau$ almost surely.
\end{example}

We end this section by applying the results of Section~\ref{sec:adg}. \thref{ass:adg} holds with $p=0$ and $y=\max_t R_t$. \thref{thm:varphi,thm:os} thus give the following.

\begin{theorem}
If $\max_t R_t\in \Y$, then  $\sup\eqref{os}=\sup\eqref{ros}=\inf\eqref{dos}$, and \eqref{os} and \eqref{ros} have a solution. In this case, a dual feasible $(p,y)$ solves \eqref{dos} if and only if there exists a stopping time $\tau\in\T$ with $p_\tau+R_\tau = y$ almost surely.
\end{theorem}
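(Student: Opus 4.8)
The plan is to verify \thref{ass:adg} for the integrand $f$ of the relaxed stopping problem and then read off the three conclusions from \thref{thm:varphi} and the duality correspondence of \thref{thm:os}, keeping track of the sign conventions that turn the minimization \eqref{sp} into the maximization \eqref{ros} and the maximization \eqref{d} into the minimization \eqref{dos} (recall $\bar u=-1$ here).

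First I would check the linearity condition. The feasible set $\{(x,u)\mid x\ge 0,\ \sum_{t=0}^Tx_t+u\le 0\}$ is a convex cone and $x\mapsto-\sum_t x_tR_t$ is linear, so $f$ is positively homogeneous and hence $f^\infty=f$. Therefore $f^\infty(x,0,\omega)$ equals $-\sum_t x_tR_t(\omega)$ under the constraints $x\ge 0$ and $\sum_t x_t\le 0$, which together force $x=0$; thus $\{x\in\N\mid f^\infty(x,0)\le 0\}=\{0\}$ is trivially linear. For the second condition I would take $p=0$, so that $\lambda p=0$ for every $\lambda$. By solidity of $\Y$ and the hypothesis $\max_t R_t\in\Y$, the random variable $y:=(\max_t R_t)^+$ lies in $\Y$, is nonnegative, and dominates each $R_t$, so the explicit formula for $f^*$ gives $f^*(0,y,\omega)=0$ and hence $\inf_{y\in\Y}Ef^*(\lambda p,y)\le Ef^*(0,y)=0<\infty$ for all $\lambda$. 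Since $x=0$ also shows $\dom Ef\cap(\X\times\U)\ne\emptyset$, \thref{ass:adg} holds.

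With the assumption in force, \thref{thm:varphi} shows that $\bar\varphi$ is lower semicontinuous on $\X\times\U$, hence closed at $(0,\bar u)$, and that the infimum defining $\inf\eqref{sp}$ is attained in $\N$; by \thref{thm:relduality1} this closedness yields $\inf\eqref{sp}=\sup\eqref{d}$. Reading these through the sign conventions (with $Ef^*\equiv 0$ on the dual feasible set), they become $\sup\eqref{ros}=\inf\eqref{dos}$ together with a solution of \eqref{ros}. Since the relaxation is exact, $\sup\eqref{os}=\sup\eqref{ros}$, and an optimal stopping time exists (by the extreme-point correspondence of \cite[Section~5.2]{pp22}, or classically in finite horizon), so \eqref{os} is solvable as well; this establishes $\sup\eqref{os}=\sup\eqref{ros}=\inf\eqref{dos}$ and the existence statements.

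Finally, I would obtain the characterization of dual optimality directly from the last assertion of \thref{thm:os}: a $\tau\in\T$ is optimal in \eqref{os} and $(p,y)$ solves \eqref{dos} if and only if $p_t+R_t\le y$ for all $t$ and $p_\tau+R_\tau=y$ almost surely. For a dual feasible $(p,y)$ solving \eqref{dos}, choosing the optimal $\tau$ produced above forces $p_\tau+R_\tau=y$ a.s.; conversely, if some $\tau\in\T$ satisfies $p_\tau+R_\tau=y$ a.s. for a dual feasible $(p,y)$, the same assertion makes $(p,y)$ optimal. The only genuinely delicate step is the verification of \thref{ass:adg}—namely recognizing that $f$ is sublinear so the linearity condition collapses to $\{0\}$, and using solidity to place $(\max_t R_t)^+$ in $\Y$ with the correct sign—while everything afterward is bookkeeping plus direct appeals to \thref{thm:varphi} and \thref{thm:os}.
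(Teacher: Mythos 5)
Your proposal is correct and follows essentially the same route as the paper: verify \thref{ass:adg} with $p=0$ and a dominating nonnegative $y\in\Y$, then invoke \thref{thm:varphi} (together with \thref{thm:relduality1}) for the absence of a duality gap and attainment, and read the optimality characterization off the last assertion of \thref{thm:os}. Your choice $y=(\max_t R_t)^+$ is in fact slightly more careful than the paper's stated choice $y=\max_t R_t$, since dual feasibility requires $y\ge 0$.
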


\subsection{Optimal control}\label{sec:ocduality}

Consider the optimal control problem
\begin{equation}\label{oc}\tag{$OC$}
\begin{aligned}
&\minimize\quad & & E\left[\sum_{t=0}^{T} L_t(X_t,U_t)\right]\quad\ovr\ (X,U)\in\N,\\
&\st\quad & & \Delta X_{t}=A_t X_{t-1} +B_t U_{t-1}+W_t\quad t=1,\dots,T
\end{aligned}
\end{equation}
where the {\em state} $X$ and the {\em control} $U$ are processes with values in $\reals^N$ and $\reals^M$, respectively, $A_t$ and $B_t$ are $\F_t$-measurable random matrices, $W_t$ is an $\F_t$-measurable random vector and the functions $L_t$ are convex normal integrands. The linear constrains in \eqref{oc} are called the {\em system equations}.

The problem fits the general duality framework with $x=(X,U)$, $\bar u=(W_t)_{t=1}^T$ and
\[
f(x,u,\omega)=\sum_{t=0}^{T}L_t(X_t,U_t,\omega) + \sum_{t=1}^{T}\delta_{\{0\}}(\Delta X_{t}- A_t(\omega)X_{t-1}-B_t(\omega)U_{t-1}-u_t).
\]
We thus assume that $\X$ and $\U$ are solid decomposable spaces of $\reals^{(T+1)(N+M)}$- and $\reals^{TM}$-valued random variables, respectively, and that $(W_1,\dots,W_T)\in\U$. By solidity,
\[
\U=\U_1\times\cdots\times\U_T,\quad\U=\Y_1\times\cdots\times\Y_T,
\]
where $\U_t$ and $\Y_t$ are solid decomposable spaces of $\reals^M$-valued random variables in separating duality under the bilinear form $(u_t,y_t)\mapsto E[u_t\cdot y_t]$. It follows that 
\[
\langle u,y\rangle=\sum_{t=1}^TE[u_t\cdot y_t].
\]
For simplicity, we assume further that, for all $t$,
\[
\begin{aligned}
  \X_t&=\S\times\C,&\quad \U_t&=\S\\
  \V_t&=\S'\times\C', &\quad \Y_t&=\S'
\end{aligned}
\]
where $\S$ and $\C$ are solid decomposable spaces in separating duality with $\S'$ and $\C'$, respectively.

The Lagrangian integrand becomes
\begin{align*}
l(x,y,\omega) &=\inf_{u\in\reals^m}\{f(x,u,\omega)-u\cdot y\}\\
&=\sum_{t=0}^{T} L_t(X_t,U_t,\omega) - \sum_{t=1}^{T}(\Delta X_{t}- A_t(\omega)X_{t-1}-B_t(\omega)U_{t-1})\cdot y_t\\ 
&=\sum_{t=0}^T[L_t(X_t,U_t,\omega) + X_t\cdot (\Delta y_{t+1}+A^*_{t+1}(\omega)y_{t+1}) + U_t\cdot B^*_{t+1}(\omega)y_{t+1}].
\end{align*}
The conjugate integrand can be written as
\begin{align*}
f^*(v,y,\omega) &=\sup_{x\in\reals^n}\{x\cdot v-l(x,y,\omega)\}\\
&=\sum_{t=0}^{T} L^*_t(v_t - (\Delta y_{t+1}+A^*_{t+1}(\omega)y_{t+1}, B^*_{t+1}(\omega)y_{t+1}),\omega),
\end{align*}
where $y_{T+1}:=0$, $A_{T+1}:=0$ and $B_{T+1}:=0$.

As soon as $\dom Ef\cap(\X\times\U)\ne\emptyset$, \thref{lem:dual} says that the dual problem can be written as
\begin{equation}\label{doc}\tag{$D_{OC}$}
\begin{aligned}
&\maximize & & E\left[\sum_{t=1}^{T}W_t\cdot y_t-\sum_{t=0}^{T} L^*_t(p_t - (\Delta y_{t+1}+A^*_{t+1}y_{t+1},B^*_{t+1}y_{t+1}))\right]\\
&\ovr & & (p,y)\in\X_a^\perp\times\Y.
\end{aligned}
\end{equation}
\thref{thm:duality,cor:kkt} give the following.

\begin{theorem}\thlabel{ocdual}
If $\dom Ef\cap(\X\times\U)\ne\emptyset$ and \eqref{oc} and \eqref{doc} are feasible, then the following are equivalent
\begin{enumerate}
\item
  $(X,U)$ solves \eqref{oc}, $(p,y)$ solves \eqref{doc} and there is no duality gap,
\item
  $(X,U)$ is feasible in \eqref{oc}, $(p,y)$ is feasible in \eqref{doc} and, for all $t$, 
  \begin{align*}
    p_t-(\Delta y_{t+1}+A^*_{t+1}y_{t+1},B^*_{t+1} y_{t+1})\in\partial L_t(X_t,U_t),\\
    \Delta X_{t}=A_t X_{t-1} +B_t U_{t-1}+W_t
  \end{align*}
  almost surely.
\end{enumerate}
\end{theorem}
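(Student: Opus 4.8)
The plan is to recognize \eqref{oc} as an instance of \eqref{spx} for the normal integrand $f$ and the perturbation $\bar u=(W_t)_{t=1}^T$ displayed above, and then to read off the optimality conditions from the general Karush--Kuhn--Tucker characterization. By \thref{thm:duality} together with \thref{cor:kkt}, condition~1 (joint primal--dual optimality with no duality gap) is equivalent to primal feasibility $x=(X,U)\in\X_a$, dual feasibility $(p,y)\in\X_a^\perp\times\Y$, and the scenariowise inclusions $p\in\partial_x l(x,y)$ and $\bar u\in\partial_y[-l](x,y)$ almost surely. All that remains is to make these two inclusions explicit using the two expressions already derived for the Lagrangian integrand $l$, which is the bulk of the work.

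First I would compute $\partial_x l(x,y)$ from the summation-by-parts form
\[
l(x,y,\omega)=\sum_{t=0}^T\left[L_t(X_t,U_t,\omega)+X_t\cdot(\Delta y_{t+1}+A^*_{t+1}(\omega)y_{t+1})+U_t\cdot B^*_{t+1}(\omega)y_{t+1}\right].
\]
For fixed $(y,\omega)$ this is separable across $t$, and within each summand the only nonlinear part is the jointly convex integrand $L_t(\cdot,\cdot,\omega)$, the remaining terms being linear in $(X_t,U_t)$. The sum rule for subdifferentials then gives, coordinatewise in $t$,
\[
p_t-(\Delta y_{t+1}+A^*_{t+1}y_{t+1},\,B^*_{t+1}y_{t+1})\in\partial L_t(X_t,U_t)\quad\text{a.s.},
\]
with the boundary conventions $y_{T+1}=0$, $A_{T+1}=0$, $B_{T+1}=0$. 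This is the first inclusion in condition~2.

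Next I would compute $\partial_y[-l](x,y)$ from the pre-summation form
\[
l(x,y,\omega)=\sum_{t=0}^T L_t(X_t,U_t,\omega)-\sum_{t=1}^T(\Delta X_t-A_t(\omega)X_{t-1}-B_t(\omega)U_{t-1})\cdot y_t,
\]
which is affine in $y=(y_1,\dots,y_T)$. Hence $\partial_{y_t}[-l](x,y)$ is the singleton $\{\Delta X_t-A_tX_{t-1}-B_tU_{t-1}\}$, so the inclusion $\bar u\in\partial_y[-l](x,y)$ reads $W_t=\Delta X_t-A_tX_{t-1}-B_tU_{t-1}$ for every $t$, i.e.\ the system equations. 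Since these are precisely the feasibility constraints of \eqref{oc} beyond adaptedness, this inclusion is absorbed into primal feasibility, and collecting the pieces yields condition~2 exactly as stated.

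The main obstacle is the careful bookkeeping in the two subdifferential computations: verifying that the subdifferential of the integrand $l$ separates across the time index, that the $\partial_x$ block splits into the state and control parts and recombines into the single joint subdifferential $\partial L_t(X_t,U_t)$ (which relies on joint convexity of $L_t$ and linearity of the coupling terms), and that the boundary terms $y_{T+1}=0$ and $A_{T+1}=B_{T+1}=0$ are tracked correctly throughout the shift of indices. Once these are in hand, the stated equivalence is immediate from the general theory.
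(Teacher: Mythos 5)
Your proposal follows essentially the same route as the paper: the paper's entire proof is a citation of its general KKT results applied to the Lagrangian integrand $l$ computed just before the theorem, and your two subdifferential computations (separability of the summation-by-parts form of $l$ across the blocks $(X_t,U_t)$, giving the inclusion for $\partial_x l$; affineness of $l$ in $y$, so that $\partial_y[-l]$ is the singleton consisting of the system-equation residuals) are exactly the content that citation compresses. Both computations are correct, including the index bookkeeping with $y_{T+1}=0$, $A_{T+1}=0$, $B_{T+1}=0$.

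One point needs fixing, and it is an imprecision you share with the paper's own one-line citation of \thref{thm:duality,cor:kkt}. Problem \eqref{oc} optimizes over $(X,U)\in\N$, i.e.\ over adapted processes in $L^0$, so it is an instance of \eqref{sp}, not of \eqref{spx}. Consequently, ``feasible in \eqref{oc}'' means $(X,U)\in\N$ satisfying the system equations, and ``no duality gap'' means $\inf\eqref{oc}=\sup\eqref{doc}$ for the $L^0$-problem; your identification of \eqref{oc} with \eqref{spx}, with primal feasibility read as $x\in\X_a$, proves only the restricted version of the theorem, and the restricted and unrestricted problems genuinely differ in general (see \thref{ex:ceN}, where $\inf\eqref{spx}>\inf\eqref{sp}$). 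There is also a hypothesis mismatch: feasibility of \eqref{spx}, which \thref{thm:duality} and \thref{cor:kkt} require, does not follow from the stated hypotheses of \thref{ocdual}. The correct general result to invoke is \thref{thm:kktr}: its hypotheses ($\dom Ef\cap(\X\times\U)\ne\emptyset$ together with feasibility of \eqref{sp} and \eqref{d}) are exactly those of \thref{ocdual} --- this is precisely why the hypothesis $\dom Ef\cap(\X\times\U)\ne\emptyset$, which plays no role in your argument, appears in the statement --- and its condition~3 is exactly the pair of scenariowise inclusions you unwind. With that one substitution, your two computations go through verbatim and the proof is complete.
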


The optimality conditions in \thref{ocdual} can be formulated also in the form of a stochastic maximum principle.
\begin{remark}[Maximum principle]\thlabel{rem:maxoc}
The scenariowise KKT-conditions in \thref{ocdual} mean that $(X,U)$ satisfies the system equations and that
\begin{equation}\label{eq:maxoc}
-(\Delta y_{t+1},0)\in\partial_{(X_t,U_t)} H_t(X_t,U_t,y_{t+1}) - p_t,
\end{equation}
where
\[
H_t(X_t,U_t,y_{t+1}) := L_t(X_t,U_t) + y_{t+1}\cdot(A_{t+1}X_t+B_{t+1}U_t).
\]
This can be written equivalently as
\begin{align*}
U_t&\in\argmin_{U_t\in\reals^M}\{H_t(X_t,U_t,y_{t+1})-(X_t,U_t)\cdot p_t\},\\
-\Delta y_{t+1}&\in \partial_{X_t} \bar H_t(X_t,p_t,y_{t+1}),
\end{align*}
where
\begin{align*}
\bar H_t(X_t,p_t,y_{t+1}) &:= \inf_{U_t\in\reals^M}\{H_t(X_t,U_t,y_{t+1})-(X_t,U_t)\cdot p_t\}.
\end{align*}

If, for all $(X_t,U_t,y_{t+1})\in\reals^N\times\reals^M\times\reals^N$,
\begin{equation}\label{eq:3L}
\partial_{(X_t,U_t)}H_t(X_t,U_t,y_{t+1}) = \partial_{X_t} H_t(X_t,U_t,y_{t+1})\times\partial_{U_t} H_t(X_t,U_t,y_{t+1}),
\end{equation}
this can be written as
\begin{align*}
U_t&\in\argmin_{U_t\in\reals^M}\{H_t(X_t,U_t,y_{t+1})-(X_t,U_t)\cdot p_t\},\\
-\Delta y_{t+1}&\in \partial_{X_t}\{H_t(X_t,U_t,y_{t+1})-(X_t,U_t)\cdot p_t\}
\end{align*}
almost surely. Condition \eqref{eq:3L} holds, in particular, if $L_t$ is of the form
\[
L_t(X,U) = L_t^0(X,U)+L_t^1(X)+L_t^2(U),
\]
where $L_t^0$ is differentiable.
\end{remark}

\begin{proof}
The optimality conditions in \thref{ocdual} mean that
\begin{equation}\label{eq:mp}
  -(\Delta y_{t+1},0)\in\partial f_t(X_t,U_t),
\end{equation}
where $f_t(X_t,U_t) := H_t(X_t,U_t,y_{t+1}) -(X_t,U_t)\cdot p_t$. The first claim thus follows from \cite[Theorem 37.5]{roc70a}. Under \eqref{eq:3L}, condition \eqref{eq:mp} can be written as
\begin{align*}
  -\Delta y_{t+1}&\in\partial_{X_t} f_t(X_t,U_t),\\
  0&\in\partial_{U_t} f_t(X_t,U_t),
\end{align*}
which is the second condition.
\end{proof}

\begin{assumption}\thlabel{dpOCass}\ 
The spaces $\S'$ and $\C'$ are the K\"othe duals of $\S$ and $\C$, respectively, and, for all $t$,
\begin{enumerate}[label=\Alph*]
\item\label{OCassA} $E_t\S\subseteq\S$ and $E_t\C\subseteq\C$,
\item\label{OCassB} $A_{t}\S\subseteq\S$ and $B_{t}\C\subseteq\S$.
\end{enumerate}
\end{assumption}

Except for condition B, \thref{dpOCass} holds automatically e.g.\ in Lebesgue and Orlicz spaces. Part B holds e.g.\ if columns of $A_t$ and $B_t$ belong to $L^\infty$ or, alternatively, if $\C$ and $\S$ are Cartesian products of spaces of finite moments (see \cite[Section~6.1]{pp220}) and  the columns of $A_t$ and $B_t$ belong to $\S$.  By \thref{lem:EGcont3,lem:A0}, \thref{dpOCass} implies that, for all $t$,
\begin{enumerate}[label=\Alph*]
\item[A$'$] $E_t\S'\subseteq\S'$ and $E_t\C'\subseteq\C'$,
\item[B$'$] $A_{t}\S'\subseteq\S'$ and $B_{t}\S'\subseteq\C'$.
\end{enumerate}
Moreover, \thref{dpOCass} implies the following.
\begin{lemma}\thlabel{lem:EABcomm}
Under \thref{dpOCass},
\[
E_t[A_t^*y_t]=A_t^*E_ty_t\quad \text{and}\quad E_t[B_t^*y_t]=B_t^*E_ty_t
\]
for all $y_t\in\S'$.
\end{lemma}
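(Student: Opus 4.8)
The plan is to prove the two adjoint-commutation identities $E_t[A_t^*y_t]=A_t^*E_ty_t$ and $E_t[B_t^*y_t]=B_t^*E_ty_t$ for $y_t\in\S'$ by reducing each to the conditional-expectation commutation facts already assembled in the excerpt, chiefly \thref{lem:EGcont3} and \thref{lem:A0}, together with the basic identity $E^\G[\cdot\,]$ from \thref{lem:ce}. Since the two claims are structurally identical (only the shape of the matrix $A_t$ versus $B_t$ and the target space differ), I would prove the statement for $A_t^*$ in detail and then remark that the argument for $B_t^*$ is verbatim the same, using $B_t\C\subseteq\S$ in place of $A_t\S\subseteq\S$.

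First I would fix $t$ and recall what \thref{dpOCass} buys us. Condition \ref{OCassB} gives $A_t\S\subseteq\S$, so by \thref{lem:A0} (applied to the random matrix $A_t$ with $\X=\U=\S$ and $\V=\Y=\S'$, noting that $\S'$ is the K\"othe dual of $\S$) the pointwise map $x_t\mapsto A_tx_t$ is weakly continuous $\S\to\S$ with adjoint given pointwise by $x_t\mapsto A_t^*x_t$, and in particular $A_t^*\S'\subseteq\S'$ — this is exactly the primed condition B$'$ already recorded. Condition \ref{OCassA} gives $E_t\S\subseteq\S$, and then \thref{lem:EGcont3} (again with $\S$ self-paired against its K\"othe dual $\S'$) yields $E_t\S'\subseteq\S'$, i.e.\ condition A$'$. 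Thus every operator appearing in the claimed identity maps $\S'$ into $\S'$ and the statement is well-posed.

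The core computation is to verify the identity after testing against an arbitrary $x_t\in\S$, using separating duality. For $x_t\in\S$ and $y_t\in\S'$ I would write
\[
\langle x_t, E_t[A_t^*y_t]\rangle = \langle E_tx_t, A_t^*y_t\rangle = \langle A_t(E_tx_t), y_t\rangle = \langle E_t[A_tx_t], y_t\rangle,
\]
where the first equality is the self-adjointness of $E_t$ from \thref{lem:EGcont3}, the second is the adjoint relation $\langle A_tz,y_t\rangle=\langle z,A_t^*y_t\rangle$ from \thref{lem:A0}, and the third uses that $A_t$ is $\F_t$-measurable so $A_t(E_tx_t)=E_t[A_tx_t]$ by the pull-out property in \thref{lem:ce}. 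Continuing,
\[
\langle E_t[A_tx_t], y_t\rangle = \langle A_tx_t, E_ty_t\rangle = \langle x_t, A_t^*E_ty_t\rangle,
\]
again by self-adjointness of $E_t$ and then the adjoint relation for $A_t$. Chaining these gives $\langle x_t, E_t[A_t^*y_t]\rangle=\langle x_t, A_t^*E_ty_t\rangle$ for every $x_t\in\S$, and since $\S$ and $\S'$ are in separating duality this forces $E_t[A_t^*y_t]=A_t^*E_ty_t$ almost surely.

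The main obstacle to watch is not the algebra but making sure every step stays inside the paired spaces so that the pairings and \thref{lem:EGcont3}, \thref{lem:A0} genuinely apply — this is precisely why the K\"othe-dual hypothesis and conditions A, B of \thref{dpOCass} are needed, and the prior derivation of A$'$, B$'$ means the checks are already in place. One subtlety is the use of the scenariowise pull-out $A_t(E_tx_t)=E_t[A_tx_t]$: it relies on $A_t$ being $\F_t$-measurable so each entry can be pulled out of $E_t=E^{\F_t}$, and I would cite \thref{lem:ce} for this. Once the $A_t$ case is complete, the $B_t$ case follows identically, with the only change being that $B_t$ maps $\C$ into $\S$ (so $B_t^*$ maps $\S'$ into $\C'$), which is covered by condition \ref{OCassB} and the corresponding primed statement B$'$; I would simply note that the same four-step chain applies with $\C,\C'$ in place of the domain spaces.
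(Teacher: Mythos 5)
Your proof hinges on the step $A_t(E_tx_t)=E_t[A_tx_t]$, which you justify only by the $\F_t$-measurability of $A_t$ plus a citation of \thref{lem:ce}; that is not sufficient, and this is where the genuine gap lies. Part 2 of \thref{lem:ce} has a second hypothesis: the product must be quasi-integrable. Applied entrywise, you must first know that each product $(A_t)_{ij}(x_t)^j$ is quasi-integrable before the entry $(A_t)_{ij}$ may be pulled out of $E_t$, and in fact, to reassemble the vector identity you need these products to be genuinely integrable so that $E_t$ distributes over the sum $\sum_j(A_t)_{ij}(x_t)^j$ (part 1 of \thref{lem:ce}). Neither is automatic: $A_t$ need not be bounded, and a product of two integrable random variables need not even be quasi-integrable. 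This unverified hypothesis is precisely the crux of the lemma; everything else in your chain (self-adjointness of $E_t$ from \thref{lem:EGcont3}, the adjoint relation from \thref{lem:A0}, the primed conditions A$'$ and B$'$, and the final separation step) is correct but routine bookkeeping around it.

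The missing idea is the paper's solidity argument, and with it your proof closes: by solidity of $\S$, the vector $x_t^{(j)}$ obtained from $x_t$ by zeroing all components except the $j$-th lies in $\S$; condition B of \thref{dpOCass} then gives $A_tx_t^{(j)}\in\S\subseteq L^1$, whose $i$-th component is exactly $(A_t)_{ij}(x_t)^j$, so all entrywise products are integrable and \thref{lem:ce} applies. The paper runs this same argument directly on the dual side (zeroing components of both $X_{t-1}\in\S$ and $y_t\in\S'$, and using that constants lie in $\S$, to conclude $(A_t^*)_{ij}y_t^j\in L^1$), after which the claim follows at once; your route instead proves the primal-side commutation and transfers it to $\S'$ via \thref{lem:EGcont3}, \thref{lem:A0} and separating duality. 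Once the integrability is supplied, your transfer argument is valid (including the verbatim adaptation for $B_t$ with the pair $\C$, $\C'$), but note that the duality detour does not exempt you from the measure-theoretic check --- it only relocates it from $\S'$ to $\S$ and adds machinery the paper does not need.
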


\begin{proof}
B implies that $y_t\cdot A_tX_{t-1}$ is integrable for all $X_{t-1}\in\S$ and $y_t\in\S'$. Solidity of $\S$ implies that if we take $X_{t-1}\in\S$ and set all but one of its components to zero, the resulting vector is still in $\S$. Similarly for $\S'$. Condition B thus implies that $(A^*_t)_{i,j}y^j_t\in L^1$ for all $i,j$. The claim now follows from \thref{lem:ce}.
\end{proof}

\begin{remark}[Reduced dual]\thlabel{rem:ocrd}
Assume that each $L_t$ is $\F_t$-measurable, each $EL_t$ is proper on $\S\times\C$ and that \thref{dpOCass} holds. Then the optimum value of the dual problem \eqref{doc} equals that of the {\em reduced dual problem}
\begin{equation*}
\begin{aligned}
&\maximize\quad & & E\left[\sum_{t=1}^T W_t\cdot y_t -\sum_{t=0}^{T} [L^*_t(-E_t(\Delta y_{t+1}+A^*_{t+1}y_{t+1}, E_tB^*_{t+1}y_{t+1}))]\right]\quad\ovr\  y\in\Y_a
\end{aligned}
\end{equation*}
If $(p,y)\in\X_a^\perp\times\Y$ solves \eqref{doc}, then $\ap y$ solves the reduced dual. If $y$ solves the reduced dual and 
\[
p_t = (\Delta y_{t+1}+A^*_{t+1}y_{t+1},B^*_{t+1}y_{t+1})- E_t(\Delta y_{t+1}+A^*_{t+1}y_{t+1},B^*_{t+1}y_{t+1}),
\]
then $(p,y)$ solves \eqref{doc}.

If \eqref{doc} has a solution, then an $x$ is optimal if and only if it is feasible and there is a $y$ feasible in the reduced dual  such that 
\begin{align}\label{eq:ocrdmax}
-E_t(\Delta y_{t+1}+A^*_{t+1}y_{t+1},B^*_{t+1} y_{t+1})\in\partial L_t(X_t,U_t)
\end{align}
almost surely.
\end{remark}

The optimality conditions \eqref{eq:ocrdmax} are closely related to (5.20a)--(5.20e) in \cite{cccd15}. It should be noted however, that in \cite{cccd15}, the functions $L_t$ depend on $W_{t+1}$.

\begin{proof}
Let $y\in\Y$. By the Jensen's inequality in Theorem~\ref{thm:jensen}, assumptions imply
\begin{multline*}
  \inf_{p\in\X_a^\perp} E\sum_{t=0}^{T} L^*_t(p_t - (\Delta y_{t+1}+A^*_{t+1}y_{t+1},B^*_{t+1}y_{t+1}))\\
  = E\sum_{t=0}^{T} L^*_t(-E_t(\Delta y_{t+1}+A^*_{t+1}y_{t+1},B^*_{t+1}y_{t+1})).
\end{multline*}
The properties A$'$ and B$'$ stated after \thref{dpOCass} imply that the $p$ given in the statement belongs to $\X_a^\perp$ so it attains the infimum above. By \thref{lem:EABcomm},
\[
-E_t(\Delta y_{t+1}+A^*_{t+1}y_{t+1},B^*_{t+1}y_{t+1}) = -E_t(\Delta \ap y_{t+1}+A^*_{t+1}\ap y_{t+1},B^*_{t+1}\ap y_{t+1}),
\]
so $y$ can be chosen adapted without worsening the dual objective. The last claim follows from the third and \thref{ocdual}.
\end{proof}

\begin{remark}[Maximum principle in reduced form]
The scenariowise optimality condition in \thref{rem:ocrd} can be written as
\[
-(E_t\Delta y_{t+1},0)\in\partial_{(X,U)} H_t(X_t,U_t,y_{t+1}),
\]
where
\[
H_t(X_t,U_t,y_{t+1}) := L_t(X_t,U_t) + E_t[A_{t+1}^*y_{t+1}]\cdot X_t+E_t[B_{t+1}^*y_{t+1}]\cdot U_t.
\]
As in \thref{rem:maxoc}, this can be written also as
\begin{align*}
U_t&\in\argmin_{U_t\in\reals^M}H_t(X_t,U_t,y_{t+1}),\\
-E_t\Delta y_{t+1}&\in \partial_X \bar H_t(X_t,y_{t+1}),
\end{align*}
where
\begin{align*}
\bar H_t(X_t,y_{t+1}) &:= \inf_{U_t\in\reals^M}H_t(X_t,U_t,y_{t+1}).
\end{align*}
\end{remark}

We end this section by an application of \thref{thm:varphi}. In optimal control, \thref{ass:adg} holds under the following.

\begin{assumption}\thlabel{ass:oc4}
\mbox{}
\begin{enumerate}
\item \eqref{oc} is feasible,
\item $\dom Ef\cap(\X\times\U)\ne\emptyset$,
\item $\{(X,U)\in\N\mid L^\infty_t(X_t,U_t)\le 0\, \Delta X_{t}=A_t X_{t-1} +B_t U_{t-1}\   t=0,\dots T\}$ is a linear space,
\item  there exists a $p\in\X_a^\perp$ and an $\epsilon>0$ such that for all $\lambda\in(1-\epsilon,1+\epsilon)$ there exist a $y\in\Y$ such that $(\lambda p,y)$ is feasible in \eqref{doc}.
\end{enumerate}
\end{assumption}


\thref{thm:varphi,ocdual} give the following.

\begin{theorem}
Under \thref{ass:oc4}, $\inf\eqref{oc}=\sup\eqref{doc}$ and \eqref{oc} has a solution. In this case, a dual feasible $(p,y)$ solves \eqref{doc} if and only if there exists a primal feasible $x$ such that, for all $t$, 
  \begin{align*}
    p_t-(\Delta y_{t+1}+A^*_{t+1}y_{t+1},B^*_{t+1} y_{t+1})\in\partial L_t(X_t,U_t),\\
    \Delta X_{t}=A_t X_{t-1} +B_t U_{t-1}+W_t
  \end{align*}
  almost surely.
\end{theorem}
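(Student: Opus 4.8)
The plan is to read the theorem as the specialization of the two results it is credited to. First I would check that \thref{ass:oc4} implies \thref{ass:adg} for the control integrand $f$ written out at the start of the subsection; granting this, the equality $\inf\eqref{oc}=\sup\eqref{doc}$ together with existence of a primal solution is exactly the corollary stated right after \thref{thm:varphi} (applied with $\eqref{sp}=\eqref{oc}$ and $\eqref{d}=\eqref{doc}$), and the ``if and only if'' is the optimality characterization \thref{ocdual} combined with the now-established no-gap property.

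So the core of the argument is the verification of the three ingredients of \thref{ass:adg}. Feasibility of \eqref{oc} is item~1, and $\dom Ef\cap(\X\times\U)\ne\emptyset$ --- which is needed so that the corollary after \thref{thm:varphi} may invoke \thref{thm:relduality1} --- is item~2. For the linearity requirement I would compute the recession integrand $f^\infty(\cdot,0,\omega)$ pointwise: since $f(\cdot,\cdot,\omega)$ is a sum of the finite convex running costs $\sum_t L_t(X_t,U_t,\omega)$ and the indicators of the affine system-equation constraints, and the recession function of a sum of closed proper convex functions is the sum of the recession functions (the indicator of the system-equation subspace being its own recession function), one obtains
\[
f^\infty((X,U),0,\omega)=\sum_{t=0}^T L_t^\infty(X_t,U_t,\omega)+\sum_{t=1}^T\delta_{\{0\}}\bigl(\Delta X_t-A_tX_{t-1}-B_tU_{t-1}\bigr).
\]
Thus $\{x\in\N\mid f^\infty(x,0)\le 0\}$ consists of the adapted processes solving the homogeneous system equations along which the running costs have nonpositive recession, which is the set assumed linear in item~3 (here one uses sublinearity of each $L_t^\infty$ to pass between the joint ``$\le 0$'' and the componentwise formulation). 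Finally, the remaining condition of \thref{ass:adg} follows from item~4: feasibility of $(\lambda p,y)$ in \eqref{doc} means $\langle\bar u,y\rangle-Ef^*(\lambda p,y)>-\infty$, and since $\bar u=(W_t)_{t=1}^T\in\U$ pairs with $y\in\Y$ to a finite number, this forces $Ef^*(\lambda p,y)<\infty$, hence $\inf_{y\in\Y}Ef^*(\lambda p,y)<\infty$ for each $\lambda$ in the interval; shrinking the open interval of item~4 to the closed interval demanded by \thref{ass:adg} is harmless.

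With \thref{ass:adg} in hand, the corollary after \thref{thm:varphi} delivers $\inf\eqref{oc}=\sup\eqref{doc}$ and a solution of \eqref{oc}, giving the first assertion. For the equivalence I would argue as follows. \thref{ocdual} states that, for $(X,U)$ feasible in \eqref{oc} and $(p,y)$ feasible in \eqref{doc}, joint optimality with no gap is equivalent to the displayed scenariowise subdifferential-plus-system-equation conditions. Since we have shown the gap is always zero and a primal optimum exists, a dual feasible $(p,y)$ that solves \eqref{doc} can be paired with any primal optimizer to satisfy condition~1 of \thref{ocdual}, whence the scenariowise conditions hold for that feasible $x$; conversely, if some primal feasible $x$ satisfies those conditions together with $(p,y)$, then condition~2 of \thref{ocdual} holds, so by that theorem $x$ and $(p,y)$ are optimal and in particular $(p,y)$ solves \eqref{doc}. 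This is precisely the stated ``if and only if.''

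The main obstacle is the recession step: one must justify that $f^\infty$ in the direction $(x,0)$ splits as above and that the resulting zero-sublevel set matches the linear set of item~3 of \thref{ass:oc4}. Care is needed when the $L_t$ are not bounded below, since then $L_t^\infty$ may take negative values, so that the summed recession inequality $\sum_t L_t^\infty\le 0$ and the componentwise inequalities $L_t^\infty\le 0$ need not define the same cone; the reconciliation must be carried out through the sublinearity of the $L_t^\infty$ on the relevant subspace. Once this identification is secured, the remainder is bookkeeping with the already-computed expressions for $f^\infty$ and $f^*$ and a direct appeal to \thref{thm:varphi} and \thref{ocdual}.
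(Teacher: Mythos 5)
Your route is the paper's own: show that \thref{ass:oc4} implies \thref{ass:adg}, then invoke the corollary to \thref{thm:varphi} for the no-gap and existence assertions and \thref{ocdual} for the equivalence. Your handling of items 1, 2 and 4 (including the harmless shrinking of the open interval to a closed one) and your derivation of the ``if and only if'' from \thref{ocdual} together with the no-gap and attainment statements are correct. The genuine gap is exactly the step you yourself call the main obstacle, and it cannot be repaired by sublinearity. Your recession computation gives
\[
\{x\in\N\mid f^\infty(x,0)\le 0\}
=\{(X,U)\in\N\mid \textstyle\sum_{t=0}^T L_t^\infty(X_t,U_t)\le 0,\ \Delta X_t=A_tX_{t-1}+B_tU_{t-1},\ t=1,\dots,T\ \text{a.s.}\},
\]
and \thref{ass:adg} requires \emph{this} set to be linear, whereas item 3 of \thref{ass:oc4} assumes linearity only of the smaller componentwise set. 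A sum of sublinear functions can be nonpositive while its terms have opposite signs, so linearity of the componentwise set does not imply linearity of the summed set: with $T=1$, $A_1=B_1=0$, $L_0(X_0,U_0)=X_0$ and $L_1(X_1,U_1)=|X_1|$, the componentwise set is $\{(X,U)\in\N\mid X_0=X_1=0\}$, a linear space, while the summed set is $\{(X,U)\in\N\mid X_1=X_0\le 0\}$, which is not.

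Moreover, the step cannot be completed by any argument, because with the componentwise reading of item 3 the conclusion itself fails. Make the example stochastic: $\F_0$ trivial, $W_1$ standard normal, system equation $\Delta X_1=W_1$. All four items of \thref{ass:oc4} hold (item 4 with $p=0$ and $y_1\equiv -1$), yet
\[
\inf\eqref{oc}=\inf_{X_0\in\reals}\bigl(X_0+E|X_0+W_1|\bigr)=\inf_{X_0\in\reals}2E[(X_0+W_1)^+]=0
\]
is not attained, so \eqref{oc} has no solution. To close your argument one must either read item 3 with the sum, i.e.\ assume linearity of $\{x\in\N\mid \sum_{t}L_t^\infty(X_t,U_t)\le 0,\ \Delta X_t=A_tX_{t-1}+B_tU_{t-1}\}$ (the direct specialization of \thref{ass:adg}), or add a hypothesis such as $L_t^\infty\ge 0$ (e.g.\ each $L_t$ bounded below), under which a nonpositive sum of nonnegative terms forces every term to vanish and the two sets coincide. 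Note that the paper itself makes the identification you attempted silently (``In optimal control, \thref{ass:adg} holds under the following''), so you have put your finger on a real weak point of the text; but your proposed reconciliation via ``sublinearity on the relevant subspace'' is not a proof, and the example above shows none exists.
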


\subsection{Problems of Lagrange}\label{sec:lagrange3}

Consider the problem
\begin{equation}\label{lagrange}\tag{$L$}
\minimize\quad E\sum_{t=0}^T K_t(x_t,\Delta x_t)\quad\ovr x\in\N,
\end{equation}
where $x$ is a process of fixed dimension $d$, $K_t$ are convex normal integrands and $x_{-1}:=0$. Problem~\eqref{lagrange} can be thought of as a discrete-time version of a problem studied in calculus of variations. Other problem formulations have $K_t(x_{t-1},\Delta x_t)$ instead of $K_t(x_t,\Delta x_t)$ in the objective, or an additional term of the form $Ek(x_0,x_T)$, all of which fit the general format of stochastic optimization.

This fits the general duality framework with $\bar u=0$ and
\[
f(x,u,\omega) = \sum_{t=0}^T K_t(x_{t},\Delta x_t+u_t,\omega).
\]
We thus assume that both $\X$ and $\U$ are solid decomposable spaces of $\reals^{(T+1)d}$-valued random variables. For simplicity, we assume that 
\[
\X_t=\S,\quad \V_t=\S',\quad \U=\X,\quad\Y=\V,
\]
where $\S$ and $\S'$ are solid decomposable spaces in separating duality.

The Lagrangian integrand becomes
\begin{align*}
  l(x,y,\omega) &= \sum_{t=0}^T\left[\Delta x_t\cdot y_t+H_t(x_{t},y_t,\omega))\right]\\
&=\sum_{t=0}^T\left[-x_{t}\cdot\Delta y_{t+1}+H_t(x_{t},y_t,\omega)\right],  
\end{align*}
where $y_{T+1}:=0$ and
\[
H_t(x_{t},y_t,\omega) := \inf_{u_t\in\reals^d}\{K_t(x_{t},u_t,\omega)-u_t\cdot y_t\}
\]
is the associated {\em Hamiltonian}. The conjugate integrand can be written as
\begin{align*}
  f^*(v,y,\omega) &= \sup\{x\cdot v-l(x,y,\omega)\}\\
  &=\sum_{t=0}^T K_t^*(v_{t}+\Delta y_{t+1},y_t,\omega).
\end{align*}
If \eqref{lagrange} is feasible, \thref{lem:dual} says that the dual problem can be written as
\begin{equation}\label{dl}\tag{$D_L$}
\maximize\quad\quad E [-\sum_{t=0}^T K_t^*(p_{t}+\Delta y_{t+1},y_t)]\quad\ovr y\in\Y,p\in \X_a^\perp
\end{equation}
where $y_{T+1} := 0$. \thref{thm:duality,cor:kkt} now give the following.

\begin{theorem}\thlabel{thm:dl}
If $\dom Ef\cap(\X\times\U)\ne\emptyset$ and \eqref{lagrange} and \eqref{dl} are feasible, then the following are equivalent
\begin{enumerate}
  \item
    $x$ solves \eqref{lagrange}, $(p,y)$ solves \eqref{dl} and there is no duality gap,
  \item
    $x$ is feasible in \eqref{lagrange}, $(p,y)$ is feasible in \eqref{dl} and
    \begin{align*}
      \begin{split}
        p_{t} + \Delta y_{t+1}&\in \partial_{x} H_t(x_{t},y_t),\\
        \Delta x_t &\in \partial_{y}[-H_t](x_{t},y_t),
      \end{split}
    \end{align*}
    almost surely.
\end{enumerate}
\end{theorem}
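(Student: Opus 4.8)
The plan is to derive this theorem as a direct specialization of the general optimality characterization in \thref{thm:kktr}, much as was done for \thref{mp3} and \thref{thm:os}. The problem \eqref{lagrange} fits the general format \eqref{sp} with $\bar u=0$ and the integrand $f(x,u,\omega)=\sum_{t=0}^T K_t(x_t,\Delta x_t+u_t,\omega)$ already recorded above, and the Lagrangian integrand $l$ and the conjugate $f^*$ have been computed in the preceding display. Since the hypotheses of the theorem are exactly $\dom Ef\cap(\X\times\U)\ne\emptyset$ together with feasibility of \eqref{lagrange} and \eqref{dl}, \thref{thm:kktr} applies verbatim and reduces the equivalence of conditions 1 and 2 to translating the abstract conditions
\[
p\in\partial_x l(x,y),\quad \bar u\in\partial_y[-l](x,y)\quad P\text{-a.s.}
\]
into the stated pointwise conditions involving the Hamiltonian $H_t$.

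First I would note that the Lagrangian integrand is separable in the sense $l(x,y,\omega)=\sum_{t=0}^T[-x_t\cdot\Delta y_{t+1}+H_t(x_t,y_t,\omega)]$, so that it decomposes across the time index $t$ and across the two groups of variables. Consequently the subdifferential $\partial_x l$ splits coordinatewise: the $x_t$-component of $p\in\partial_x l(x,y)$ reads $p_t+\Delta y_{t+1}\in\partial_{x_t}H_t(x_t,y_t)$, where the term $\Delta y_{t+1}$ appears precisely because the gradient of $-x_t\cdot\Delta y_{t+1}$ in $x_t$ is $-\Delta y_{t+1}$. This is exactly the first condition in item 2, so that part is essentially a direct reading of the separable structure. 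The step that needs a little care is extracting the second optimality condition from $\bar u=0\in\partial_y[-l](x,y)$: here one must recall that $l(x,\cdot,\omega)$ is concave and that $-l(x,y,\omega)=\sum_t[x_t\cdot\Delta y_{t+1}-H_t(x_t,y_t,\omega)]$, and that $H_t(x_t,\cdot,\omega)$ is itself a partial conjugate (up to sign) of $K_t$, so that $H_t(x_t,\cdot)$ is concave and upper semicontinuous. The $y_t$-component of the condition $0\in\partial_y[-l](x,y)$ then yields $\Delta x_t\in\partial_{y}[-H_t](x_t,y_t)$ once one tracks the contribution of the difference operator $\Delta$.

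The main obstacle, and the one deserving an explicit remark, is the bookkeeping of the difference operator and the boundary convention $y_{T+1}:=0$. The term $\sum_t\Delta x_t\cdot y_t$ in the original expression for $l$ has been rewritten by summation by parts as $-\sum_t x_t\cdot\Delta y_{t+1}$, and this identity is what transfers the adjoint difference onto the dual variable; one must verify that the discrete integration by parts is exact under the stated convention $x_{-1}:=0$ and $y_{T+1}:=0$ so that no boundary terms are lost and the $\Delta x_t$ reappears correctly on the dual side. Apart from this discrete calculus, one should confirm that $H_t$ is a convex normal integrand in $x_t$ and concave upper semicontinuous normal integrand in $y_t$, so that the scenariowise subdifferential operations are legitimate; this follows from the general properties of the Lagrangian integrand established right after its definition in \thref{lem:L}, applied componentwise to each $K_t$.

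In summary, the proof is short: invoke \thref{thm:kktr} to obtain the equivalence of 1 and 2 at the abstract level of the conditions $p\in\partial_x l(x,y)$ and $0\in\partial_y[-l](x,y)$, then unpack these using the explicit separable form of $l$ in terms of the Hamiltonians $H_t$ together with the summation-by-parts identity, arriving at the two displayed pointwise relations. I do not expect any genuinely hard analytic step; the entire content is the translation of the generic KKT conditions into the Hamiltonian formulation, with the discrete integration by parts being the only place where a careless sign or a dropped boundary term could go wrong.
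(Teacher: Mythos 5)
Your proof is correct and follows essentially the same route as the paper, which likewise obtains the theorem by specializing its general scenariowise KKT characterization and unpacking the conditions through the separable form of $l$ and the summation-by-parts identity (with $x_{-1}=0$, $y_{T+1}=0$) already established when deriving $l$ and $f^*$. If anything, your citation of \thref{thm:kktr} is the better match: the paper invokes \thref{thm:duality,cor:kkt}, which concern \eqref{spx}, whereas \eqref{lagrange} is posed over $\N$ and the hypotheses of \thref{thm:dl} are exactly those of \thref{thm:kktr}.
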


Note that, by \cite[Theorem 37.5]{roc70a}, the scenariowise KKT-conditions can be written equivalently as the discrete-time stochastic Euler--Lagrange equations
\begin{align}\label{eq:eloc}
(p_{t} + \Delta y_{t+1},y_t)\in \partial K_t(x_{t},\Delta x_t)
\end{align}
or
\begin{align*}
(x_{t},\Delta x_t)\in \partial K^*_t(p_{t} + \Delta y_{t+1},y_t).
\end{align*}

\begin{assumption}\thlabel{ass:L3}
The space $\S'$ is the K\"othe dual of $\S$ and $E_t\S\subseteq\S$ for all $t$.
\end{assumption}

By \thref{lem:EGcont3}, \thref{ass:L3} implies that $E_t\S'\subset\S'$ for all $t$.

\begin{remark}[Reduced dual]\thlabel{rdl}
Consider \thref{thm:dl} and assume that \thref{ass:L3} holds and that, for all~$t$, $K_t$ is $\F_t$-measurable and $EK_t$ is proper on $\S\times\S$. Then the optimum value of the dual problem \eqref{dl} equals that of the reduced dual problem
\begin{equation*}
\begin{aligned}
&\maximize\quad\quad  & & E [-\sum_{t=0}^T K_t^*(E_t\Delta y_{t+1},y_t)]\quad\ovr y\in\Y_a.
\end{aligned}
\end{equation*}
If $(p,y)$ solves the dual problem, then $\ap y$ solves the reduced dual problem. If $y$ solves the reduced dual problem, then $(p_t,y_t)=(E_ty_{t+1}-y_{t+1},y_t)$ solves the dual problem.

If \eqref{dl} has a solution, then an $x$ is optimal if and only if it is feasible and there is a $y$ feasible in the reduced dual  such that 
 \begin{align*}
      \begin{split}
        E_t\Delta y_{t+1}&\in \partial_{x} H_t(x_{t},y_t),\\
        \Delta x_t &\in \partial_{y}[-H_t](x_{t},y_t),
      \end{split}
    \end{align*}
almost surely
\end{remark}

\begin{proof}
Given $(p,y)\in\X_a^\perp\times\Y$, the assumptions imply, by Jensen's inequality in Theorem~\ref{thm:jensen},
\begin{align*}
 E [\sum_{t=0}^T K_t^*(p_{t}+\Delta y_{t+1},y_t)]\ge &E [\sum_{t=0}^T K_t^*(E_{t}\Delta y_{t+1},E_{t}y_t)].
\end{align*}
Given $y\in\Y_a$, the above holds as an equality when $p_t=E_{t}\Delta y_{t+1}-\Delta y_{t+1}$. Since $p\in\X_a^\perp$ under \thref{ass:L3}, this proves the first claim, which together with \thref{thm:dl} implies the next two. The last claim follows from the third and \thref{thm:dl}.
\end{proof}

\begin{remark}
Any $y\in\Y_a$ has the {\em Doob decomposition}
\[
y_t=m_t+a_t,
\]
where $m$ is a martingale and $a_t$ is $\F_{t-1}$-measurable. Indeed, let $\Delta a_t := E_{t-1}\Delta y_{t}$, $a_0:=0$, $\Delta m_t := \Delta y_t-\Delta a_t$ and $m_0:=y_0$. The reduced dual problem in \eqref{rdl} can be written as
\begin{equation*}
\begin{aligned}
&\maximize\quad & & E [-\sum_{t=0}^T K_t^*(\Delta a_{t+1},m_t+a_t)]\quad\ovr\ (m,a)\in (\Y_a\cap\M)\times\Y_p\\
&\st & & a_t\in\F_{t-1}, t=1,\dots,T, a_0=0,
\end{aligned}
\end{equation*}
where $\M$ is the set of martingales and $\Y_p$ is the of predictable processes. If the reduced dual problem has a solution, then $x$ is optimal if and only if there exists an $y$ feasible in the reduced dual such that
\begin{align*}
\begin{split}
\Delta a_{t+1}&\in \partial_{x} H_t(x_{t},m_t+a_t),\\
\Delta x_t &\in \partial_{y}[-H_t](x_{t},m_t+a_t),
\end{split}
\end{align*}
where $m+a$ is the Doob decomposition of $y$.
\end{remark}

\begin{example}[Optimal stopping]
The relaxed optimal stopping problem
\begin{equation*}\tag{$ROS$}
\maximize_{x\in\N_+}\quad E \sum_{t=0}^TR_t\Delta x_t\quad\st\quad \Delta x\ge 0,\ x_T\le 1\ a.s.
\end{equation*}
from Section~\ref{sec:os3} can be written as a problem of Lagrange with $d=1$ and
\[
K_t(x_t,u_t) = -R_tu_t + \delta_{\reals_-}(x_t-1) + \delta_{\reals_+}(u_t).
\]
We get
\begin{align*}
  K_t^*(v_t,y_t) &= \sup_{x_t,u_t\in\reals}\{x_t\cdot v_t+u_t\cdot y_t-K_t(x_t,u_t)\}\\
  &= \sup_{x_t,u_t\in\reals}\{x_t\cdot v_t+u_t\cdot y_t+R_tu_t\mid x_t\le 1,\ u_t\ge 0\}\\
  &=\begin{cases}
 v_t  & \text{if $v_t\ge 0$ and $R_t+y_t\le 0$},\\
+\infty & \text{otherwise}
  \end{cases}
\end{align*}
so the reduced dual becomes
\[
\begin{aligned}
  &\maximize & E y_0\ \ovr &y\in\Y_a\\
  &\st & E_t[\Delta y_{t+1}]&\ge 0,\\
  & & R_t+y_t&\le 0,
\end{aligned}
\]
or with the change of variables $S:=-y$,
\[
\begin{aligned}
  &\minimize\quad &  ES_0\ \ovr &S\in\Y_a\\
  &\st\quad & E_t[\Delta S_{t+1}]&\le 0,\\
  &\quad\quad & R_t&\le S_t.
\end{aligned}
\]
Thus, feasible dual solutions are supermartingales that dominate the reward process $R$.

The Hamiltonian can be written as
\begin{align*}
  H_t(x,y) &= \sup_{u\in\reals}\{uy-K_t(x_t,u_t)\}\\
  &=
  \begin{cases}
    +\infty & \text{if $x_t>0$},\\
    0 & \text{if $x_t\le 1$ and $R_t+y_t\le 0$},\\
    -\infty & \text{otherwise}
  \end{cases}
\end{align*}
so the optimality conditions become
\begin{align*}
  E_t\Delta y_{t+1}&\in N_{\reals_-}(x_t-1)\\
  \Delta x_t&\in N_{\reals_-}(R_t+y_t).
\end{align*}
This implies that is $\Delta x_t$ nonzero only when $R_t=-y_t$ and $E_t\Delta y_{t+1}$ is nonzero only when $x_t=1$.  
\end{example}

We end this section by an application of \thref{thm:varphi}.

\begin{assumption}\thlabel{ass:l4}
\mbox{}
\begin{enumerate}
\item \eqref{lagrange} is feasible,
\item $\dom Ef\cap(\X\times\U)\ne\emptyset$,
\item $\{x\in\N\mid \sum_{t=0}^T K^\infty_t(x_t,\Delta x_t)\le 0\}$ is a linear space,
\item there exists a $p\in\X_a^\perp$ and an $\epsilon>0$ such that for all $\lambda\in(1-\epsilon,1+\epsilon)$ there exist a $y\in\Y$ such that $(\lambda p,y)$ is feasible in \eqref{dl}.
\end{enumerate}
\end{assumption}

\thref{ass:l4} implies \thref{ass:adg} so \thref{thm:varphi,thm:dl} give the following.

\begin{theorem}
 Under \thref{ass:l4}, $\inf\eqref{lagrange}=\sup\eqref{dl}$ and \eqref{oc} has a solution. In this case, a dual feasible $(p,y)$ solves \eqref{dl} if and only if there exists a primal feasible $x$ such that, for all $t$,
  \begin{align*}
      \begin{split}
        p_{t} + \Delta y_{t+1}&\in \partial_{x} H_t(x_{t},y_t),\\
        \Delta x_t &\in \partial_{y}[-H_t](x_{t},y_t),
      \end{split}
    \end{align*}
    almost surely.
\end{theorem}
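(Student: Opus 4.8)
The plan is to verify that \thref{ass:l4} implies the abstract \thref{ass:adg} for the present instance of \eqref{sp}, and then to read off the conclusions from \thref{thm:varphi}, \thref{thm:relduality1} and \thref{thm:dl}. Recall that here $f(x,u,\omega)=\sum_{t=0}^T K_t(x_t,\Delta x_t+u_t,\omega)$ and $\bar u=0$, so that \eqref{lagrange} is precisely \eqref{sp} with this data and \eqref{dl} is the corresponding specialization of \eqref{d}.

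First I would match the three requirements of \thref{ass:adg} one by one. The feasibility of \eqref{sp} is condition~1 of \thref{ass:l4}. For the linearity requirement, I would compute the recession integrand: since $f(\cdot,\cdot,\omega)$ is the composition of the separable sum $w\mapsto\sum_t K_t(w_t,\omega)$ with the linear map $(x,u)\mapsto(x_t,\Delta x_t+u_t)_{t}$, its recession function is $f^\infty(x,u,\omega)=\sum_{t=0}^T K_t^\infty(x_t,\Delta x_t+u_t,\omega)$, whence $f^\infty(x,0,\omega)=\sum_{t=0}^T K_t^\infty(x_t,\Delta x_t,\omega)$. Thus the set $\{x\in\N\mid f^\infty(x,0)\le 0\}$ coincides with the one in condition~3 of \thref{ass:l4} and is therefore linear. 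For the last requirement of \thref{ass:adg}, I would use the explicit conjugate $f^*(v,y,\omega)=\sum_{t=0}^T K_t^*(v_t+\Delta y_{t+1},y_t,\omega)$ derived earlier in this section: dual feasibility of $(\lambda p,y)$ in \eqref{dl} means exactly $Ef^*(\lambda p,y)<\infty$, so condition~4 of \thref{ass:l4} yields, for every $\lambda\in(1-\epsilon,1+\epsilon)$, a $y\in\Y$ with $Ef^*(\lambda p,y)<\infty$ and hence $\inf_{y\in\Y}Ef^*(\lambda p,y)<\infty$.

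With \thref{ass:adg} in force, \thref{thm:varphi} gives the lower semicontinuity of $\bar\varphi$ on $\X\times\U$ together with attainment of the infimum, so in particular \eqref{lagrange} has a solution. Condition~2 of \thref{ass:l4} provides $\dom Ef\cap(\X\times\U)\ne\emptyset$, so \thref{thm:relduality1} turns lower semicontinuity at $(0,0)$ into the absence of a duality gap, $\inf\eqref{lagrange}=\sup\eqref{dl}$. For the optimality equivalence I would invoke \thref{thm:dl}, whose hypotheses (feasibility of both problems and $\dom Ef\cap(\X\times\U)\ne\emptyset$) now hold. If a dual feasible $(p,y)$ solves \eqref{dl}, then pairing it with a primal solution $x$ and using the established zero gap, the implication $1\Rightarrow 2$ of \thref{thm:dl} delivers the stated scenariowise conditions; conversely, the implication $2\Rightarrow 1$ shows that any primal feasible $x$ satisfying them makes $(p,y)$ dual optimal.

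I expect the only genuinely non-routine point to be the translation of the last two requirements: identifying condition~4 of \thref{ass:l4} with the finiteness $\inf_{y}Ef^*(\lambda p,y)<\infty$ through the explicit formula for $f^*$, and verifying the recession identity $f^\infty(x,0)=\sum_t K_t^\infty(x_t,\Delta x_t)$ so that the linearity hypotheses coincide. Both amount to bookkeeping with the data of this section rather than new estimates, so no serious obstacle is anticipated.
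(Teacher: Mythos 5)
Your proposal is correct and follows essentially the same route as the paper: the paper's entire argument is the one-line remark that \thref{ass:l4} implies \thref{ass:adg}, after which \thref{thm:varphi} (existence and lower semicontinuity of $\bar\varphi$, hence no duality gap via \thref{thm:relduality1}) and \thref{thm:dl} yield the conclusions. You simply fill in the bookkeeping the paper leaves implicit --- the recession identity $f^\infty(x,0)=\sum_t K_t^\infty(x_t,\Delta x_t)$ and the identification of condition~4 of \thref{ass:l4} with $\inf_{y\in\Y}Ef^*(\lambda p,y)<\infty$ via $f^*(v,y)=\sum_t K_t^*(v_t+\Delta y_{t+1},y_t)$ --- exactly as intended.
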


\subsection{Financial mathematics}\label{sec:fm3}

Let $s=(s_t)_{t=0}^T$ be an adapted $\reals^J$-valued stochastic process describing the unit prices of a finite set $J$ of perfectly liquid tradeable assets. Assume also that there is a finite set $\bar J$ of derivative assets that can be bought or sold at time $t=0$ and that provide random payments $\bar c^j\in L^0$, $j\in\bar J$ at time $t=T$. We denote $\bar c=(\bar c^j)_{j\in\bar J}$. The cost of buying a derivative portfolio $\bar x\in\reals^{\bar J}$ at the best available market prices is denoted by $S_0(\bar x)$. Such a function is convex; see e.g.\ \cite{pen11,pen11b}. We will assume, for simplicity, that $S_0$ is finite on $\reals^{\bar J}$. For example, if the buying and selling prices of the derivative assets are given by vectors $s^b\in\reals^{\bar J}$ and $s^a\in\reals^{\bar J}$ of bid- and ask-prices, respectively, and if we assume that one can buy and sell infinite quantities at these prices, then
\[
S_0(\bar x)=\sup_{s\in[s^b,s^a]}\bar x\cdot s.
\]

Consider the problem of finding a dynamic trading strategy $x=(x_t)_{t=0}^T$ in the liquid assets $J$ and a static portfolio $\bar x$ in the derivatives $\bar J$ so that their combined revenue provides the ``best hedge'' against the financial liability of delivering a random amount $c\in L^0$ of cash at time $T$. If assume that cash (or another numeraire asset) is a perfectly liquid asset that can be lent and borrowed at zero interest rate, the problem can be written as
\begin{equation*}\label{ssh}\tag{$SSH$}
\begin{aligned}
&\minimize\quad & & EV\left(c - \sum_{t=0}^{T-1}x_t\cdot\Delta s_{t+1} - \bar c\cdot \bar x + S_0(\bar x)\right)\ \ovr\ x\in\N,\bar x\in\reals^{\bar J},\\
  &\st\quad & & x_t \in D_t\quad t=0,\ldots,T-1\ a.s.,
\end{aligned}
\end{equation*}
where $V:\reals\to\ereals$ is a nondecreasing nonconstant convex ``loss function'' and $D_t$ is a random $\F_t$-measurable set describing possible portfolio constraints. We will assume $D_T=\{0\}$, which means that all positions have to be closed at the terminal date. Note that nondecreasing convex loss functions $V$ are in one-to-one correspondence with nondecreasing concave utility functions $U$ via $V(c)=-U(-c)$; see e.g.\ \cite[Section~8.2]{fs16}. 

The special case where there are no statically held derivative assets, i.e.\ $\bar J=\emptyset$, has been extensively studied in the literature of financial mathematics; see e.g.~\cite{rs5} its references. In the literature on ``model-independent'' mathematical finance, problems of finding both the dynamically updated portfolio $x$ and the static part $\bar x$ are often referred to as ``semi-static hedging''; see e.g.\ \cite{bhp13}. One should note, however, that problem \eqref{ssh} is based on the assumption that one can buy and sell arbitrary quantities of the assets $J$ at prices given by $s$. It also assumes that one can lend and borrow arbitrary amounts of cash at zero interest rate. Under these assumptions, the random variable $c$ can be thought of as the difference of the claim to be hedged and the initial wealth and the sum in the objective can be interpreted as the proceeds from trading the assets $J$ over the period $[0,T]$. More realistic models for dynamic trading have been analyzed in \cite{pen11,pen11b,pp18d}.

As soon as $c\in\U$, problem~\eqref{ssh} fits the general duality framework with the time index running from $-1$ to $T-1$, $\F_{-1}=\{\Omega,\emptyset\}$, $x_{-1}=\bar x$, $\bar u=c$ and
\[
f(x,u,\omega)=V\left(u - \sum_{t=0}^{T-1}x_t\cdot\Delta s_{t+1}(\omega) - \bar c(\omega)\cdot \bar x + S_0(\bar x),\omega\right)+ \sum_{t=0}^{T-1}\delta_{D_t(\omega)}(x_t,\omega).
\]
The Lagrangian integrand becomes
\begin{align*}
  l(x,y,\omega) &= \inf_{u\in\reals}\{f(x,u,\omega)-uy\}\\
  &=y\left[S_0(\bar x) - \bar c(\omega)\cdot \bar x - \sum_{t=0}^{T-1}x_t\cdot\Delta s_{t+1}(\omega)\right] - V^*(y,\omega) + \sum_{t=0}^{T}\delta_{D_t(\omega)}(x_t)
\end{align*}
and the conjugate of $f$,
\begin{align*}
  f^*(v,y,\omega) &= \sup_{x\in\reals^n}\{x\cdot v - l(x,y,\omega)\}\\
  &= V^*(y,\omega) + \sum_{t=0}^{T-1}\sigma_{D_t(\omega)}(v_t+y\Delta s_{t+1}(\omega)) + \sup_{\bar x\in\reals^{\bar J}}\{\bar x\cdot(v_{-1} + y\bar c(\omega)) - yS_0(\bar x)\}\\
  &= V^*(y,\omega) + \sum_{t=0}^{T-1}\sigma_{D_t(\omega)}(v_t+y\Delta s_{t+1}(\omega)) + (yS_0)^*(v_{-1}+y\bar c(\omega)).
\end{align*}

It is natural to assume that $S_0(0)=0$ and $0\in D_t$ almost surely for all $t$. If $EV$ is proper on $\U$, \thref{lem:dual} then says that the dual problem can be written as
\begin{equation}\label{dssh}\tag{$D_{SSH}$}
  \begin{aligned}
    &\maximize_{p\in \X_a^\perp, y\in\Y}\quad & & E\left[cy - V^*(y) - \sum_{t=0}^{T-1}\sigma_{D_t}(p_t+y\Delta s_{t+1}) - (yS_0)^*(p_{-1}+y\bar c)\right].
  \end{aligned}
\end{equation}
\thref{thm:duality,cor:kkt} give the following.

\begin{theorem}\thlabel{thm:ssh}
If \eqref{ssh} and \eqref{dssh} are feasible, then the following are equivalent
\begin{enumerate}
\item
  $(\bar x,x)$ solves \eqref{ssh}, $(p,y)$ solves \eqref{dssh} and there is no duality gap.
\item
  $(\bar x,x)$ is feasible in \eqref{ssh}, $(p,y)$ is feasible in \eqref{dssh} and
\begin{align*}
y &\in\partial V(u-\sum_{t=0}^{T-1} x_t\Delta s_{t+1}-\bar c\cdot\bar x+S_0(\bar x)),\\
p_t+y\Delta s_{t+1} &\in N_{D_t}(x_t)\quad t=0,\dots,T,\\
p_{-1}+y\bar c&\in\partial(yS_0)(\bar x)
\end{align*}
almost surely.
\end{enumerate}
\end{theorem}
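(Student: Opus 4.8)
The plan is to derive this as a direct specialization of \thref{thm:kktr}, which already establishes the equivalence between joint optimality with no duality gap and the scenariowise subdifferential condition $(p,y)\in\partial f(x,\bar u)$ a.s. Thus the entire content of the proof is to show that, for the specific integrand $f$ arising from \eqref{ssh} with $\bar u=c$, the abstract condition $(p,y)\in\partial f(x,\bar u)$ a.s.\ is equivalent to the three explicit conditions listed in item 2 of the statement. Since the equivalence of the top-level optimality statements (item 1 here versus item 1 in \thref{thm:kktr}) is immediate once we match the dual \eqref{dssh} with the generic dual \eqref{d} via \thref{lem:dual}, everything reduces to a pointwise (scenariowise) convex-analytic computation of $\partial f$.

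First I would fix $\omega$ and write $f(x,u,\omega)=V(g(x,u,\omega),\omega)+\sum_{t=0}^{T-1}\delta_{D_t(\omega)}(x_t)$, where $g(x,u,\omega):=u-\sum_{t=0}^{T-1}x_t\cdot\Delta s_{t+1}(\omega)-\bar c(\omega)\cdot\bar x+S_0(\bar x)$ is affine in $(x_t)_{t\ge 0}$ and in $u$, but only convex (not affine) in $\bar x=x_{-1}$ through the term $S_0(\bar x)$. The gradient of $g$ in its affine arguments is explicit: $\partial_u g=1$, $\partial_{x_t}g=-\Delta s_{t+1}$ for $t\ge 0$, while the $\bar x$-direction carries the subdifferential of $S_0$. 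The subgradient $(p,y)\in\partial f(x,u,\omega)$ decomposes across the $u$-component (yielding $y\in\partial V(g(x,u))$, the first condition), the $x_t$-components for $t\ge 0$ (yielding $p_t+y\Delta s_{t+1}\in N_{D_t}(x_t)$, via the subdifferential of $\delta_{D_t}$ composed with the chain rule through $V$ and the affine map, the second condition), and the $\bar x$-component (yielding $p_{-1}+y\bar c\in\partial(yS_0)(\bar x)$, the third condition). The chain rule I would invoke is the scenariowise composition/sum rule from \cite[Theorem~23.8 and~23.9]{roc70a}, justified because $V$ is finite-valued (hence continuous) on $\reals$ and $S_0$ is finite on $\reals^{\bar J}$, so no qualification condition can fail at interior points.

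The one place requiring care is how the single scalar multiplier $y=\partial V(g)$ propagates into each coordinate. Because $V$ is a function of the single scalar $g$ and $g$ is built from all the decision variables through one common affine-plus-$S_0$ expression, the chain rule produces the \emph{same} $y$ multiplying $\Delta s_{t+1}$ in every $t$-block and multiplying $\bar c$ and scaling $S_0$ in the $\bar x$-block; this is exactly the coupling visible in the three displayed conditions. I would verify that $\partial(yS_0)(\bar x)=y\,\partial S_0(\bar x)$ for $y\ge 0$ (and handle $y=0$ separately, where the term degenerates), using that $V^*$ being proper forces the relevant $y$ to lie in $\dom V^*\subseteq\reals_+$ since $V$ is nondecreasing. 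The main obstacle, then, is not any single deep step but the bookkeeping of the sum-and-composition subdifferential rule applied simultaneously to a term that is smooth in most variables yet nonsmooth (via $S_0$ and the indicator $\delta_{D_t}$) in others; once the pointwise formula for $\partial f$ is assembled, \thref{thm:kktr} delivers the theorem immediately.
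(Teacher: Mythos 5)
Your high-level plan (specialize \thref{thm:kktr} and translate the abstract scenariowise condition into the three explicit ones) matches the paper's intent, but the translation step as you execute it has a genuine gap. You justify the exact sum/chain rule for $\partial f$ by asserting that ``$V$ is finite-valued (hence continuous) on $\reals$''. The paper does not assume this: $V$ is only a nondecreasing, nonconstant, closed convex function $\reals\to\ereals$, so it may take the value $+\infty$ (this is precisely how hard hedging constraints of superhedging type enter the framework). When $\dom V\ne\reals$, the equality $\partial(V\circ g)(x,u)=\{y\,w \mid y\in\partial V(g(x,u)),\ w\in\partial g(x,u)\}$ requires a constraint qualification such as $g(x,\bar u)\in\rint\dom V$, and this can fail exactly at optimal points where $g(x,\bar u)$ sits on the boundary of $\dom V$; without it, only the inclusion $\supseteq$ holds, so your argument proves ``explicit conditions $\Rightarrow(p,y)\in\partial f(x,\bar u)$'' but not the converse direction needed for the equivalence. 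A secondary issue: \cite[Theorem~23.9]{roc70a} covers compositions with \emph{linear} maps, so it does not supply the monotone-convex chain rule you invoke; that rule is true for finite $V$ but is a separate (derivable, nontrivial) result.

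The qualification-free route, which is what the paper's citation of \thref{thm:duality,cor:kkt} (equivalently, condition 3 of \thref{thm:kktr}) is pointing at, avoids $\partial f$ entirely and uses the Lagrangian integrand $l$ that the paper computes just before the theorem. By \cite[Theorem~37.5]{roc70a}, since $f(\cdot,\cdot,\omega)$ is closed proper convex, $(p,y)\in\partial f(x,\bar u)$ holds if and only if $p\in\partial_x l(x,y)$ and $\bar u\in\partial_y[-l](x,y)$, with no qualification whatsoever. For fixed $y\in\dom V^*\subseteq\reals_+$, the function $x\mapsto l(x,y)$ is a \emph{separable} sum across the blocks $\bar x,x_0,\dots,x_{T-1}$, and subdifferentials of separable sums decompose exactly; this yields $p_{-1}+y\bar c\in\partial(yS_0)(\bar x)$ and $p_t+y\Delta s_{t+1}\in N_{D_t}(x_t)$ directly. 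The remaining condition $\bar u\in\partial_y[-l](x,y)$ reads $\bar u+S_0(\bar x)-\bar c\cdot\bar x-\sum_{t=0}^{T-1}x_t\cdot\Delta s_{t+1}\in\partial V^*(y)$, which inverts to $y\in\partial V\bigl(\bar u-\sum_{t=0}^{T-1}x_t\cdot\Delta s_{t+1}-\bar c\cdot\bar x+S_0(\bar x)\bigr)$ by the conjugate subgradient inversion for the closed proper convex function $V$ — again exact, with no finiteness or interiority hypotheses. Rewriting your proof along these lines closes the gap and recovers the theorem in the paper's full generality.
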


Under the following assumption, the dual problem \eqref{dssh} can be written in a reduced form where the shadow price of information $p$ has been optimized for a each given $y$.

\begin{assumption}\thlabel{ass:fm3}
$\X=L^\infty$, $\V=L^1$, $\Y$ is the K\"othe dual of $\U$ and, for all $t$,
  \begin{enumerate}
  \item[A]
    $E_t\U\subseteq\U$,
  \item[B]
    $\Delta s_{t+1}\in\U$.
  \end{enumerate}
\end{assumption}

If part B holds, \thref{ass:fm3} holds, e.g., in Lebesgue and Orlicz spaces; see the examples in Section~\ref{sec:dsrv}. By \thref{lem:EGcont3,lem:A0}, \thref{ass:fm3} implies that, for all~$t$,
\begin{enumerate}[label=\Alph*]
\item[A$'$] $E_t\Y\subseteq\Y$,
\item[B$'$] $y\Delta s_{t+1}\in L^1$ for all $y\in\Y$.
\end{enumerate}

\begin{remark}[Reduced dual]\thlabel{rdssh}
Consider \thref{thm:ssh} and assume that \thref{ass:fm3} holds. Then the optimum value of the dual problem \eqref{ssh} equals that of the reduced dual problem
\begin{equation}\label{rdssh}\tag{$rD_{SSH}$}
  \begin{aligned}
    &\maximize_{y\in\Y}\quad & & E\left[cy - V^*(y) - \sum_{t=0}^{T-1}\sigma_{D_t}(E_t[y\Delta s_{t+1}]) - (E[y]S_0)^*(E[y\bar c])\right].
  \end{aligned}
\end{equation}
Moreover, $y$ solves \eqref{rdssh} if and only if $(p,y)$ solves \eqref{dssh}, where
\[
p_{-1}:=\frac{E[y\bar c]}{E[y]}y-y\bar c\quad\text{and}\quad p_t=E_t[y\Delta s_{t+1}]-y\Delta s_{t+1}\quad t=0,\ldots,T-1.
\]

If \eqref{dl} has a solution, there is no duality gap and $\inf\eqref{ssh}>E[\inf V]$, then an $x$ is optimal if and only if it is feasible and there is a $y$ feasible in the reduced dual  such that 
\begin{align*}
y &\in\partial V(u-\sum_{t=0}^{T-1} x_t\Delta s_{t+1}-\bar c\cdot\bar x+S_0(\bar x)),\\
E_t[y\Delta s_{t+1}] &\in N_{D_t}(x_t)\quad t=0,\dots,T,\\
\frac{E[y\bar c]}{E[y]}&\in\partial S_0(\bar x)
\end{align*}
almost surely.
\end{remark}
\begin{proof}
If $(p,y)$ is feasible in \eqref{dssh}, then by Jensen's inequality, the objective of \eqref{rdssh} minorizes that of \eqref{dssh}. On the other hand, if $y$ is feasible in \eqref{rdssh} with $E[y]>0$ and we $p$ as in the statement, then $(p,y)$ is feasible in \eqref{dssh} and the objective values coincide. Indeed, by sublinearity, this choice gives
\begin{align*}
(yS_0)^*(p_{-1}+y\bar c) &= (yS_0)^*(\frac{E[y\bar c]}{E[y]}y)\\
&= (\frac{y}{E[y]}E[y]S_0)^*(\frac{y}{E[y]}E[y\bar c])\\
&= \frac{y}{E[y]}(E[y]S_0)^*(E[y\bar c])
\end{align*}
If $E[y]=0$ then $y=0$ almost surely and $(p,y)=(0,0)$ is feasible in \eqref{dssh} and gives the same objective value. By \thref{thm:ssh}, dual optimal $y$ is necessarily nonzero unless $V$ achieves its minimum almost surely at
\[
u-\sum_{t=0}^{T-1} x_t\Delta s_{t+1}-\bar c\cdot\bar x+S_0(\bar x).
\]
The last claim follows from \thref{thm:ssh}. 
\end{proof}

The scenariowise optimality conditions above can also be written as
\begin{align*}
\lambda\frac{dQ}{dP} &\in\partial V(u-\sum_{t=0}^{T-1} x_t\Delta s_{t+1}-\bar c\cdot\bar x+S_0(\bar x)),\\
E^Q_t[\Delta s_{t+1}] &\in N_{D_t}(x_t)\quad t=0,\dots,T,\\
E^Q[\bar c]&\in\partial S_0(\bar x),
\end{align*}
where $\lambda=E[y]$ and $Q$ is the probability measure defined by $dQ/dP=y/E[y]$.
If $S_0$ is sublinear, the reduced dual can be written as
\begin{equation*}
  \begin{aligned}
    &\maximize_{y\in\Y}\quad\quad & &E\left[c y- V^*(y) - \sum_{t=0}^{T-1}\sigma_{D_t}(E_t[y\Delta s_{t+1}]) \right]\\
    &\st & &\qquad E[y\bar c]\in E[y]\dom S_0^*,
  \end{aligned}
\end{equation*}
where the set $E[y]\dom S_0^*$ is interpreted as the recession cone of $\dom S_0^*$ if $E[y]=0$. If $E[y]\ne 0$, the constraint means that
\[
E^Q\bar c \in\dom S_0^*.
\]
The constraints in the dual thus require that the measure $Q$ be ``calibrated'' to the observed market prices of the claims $\bar c$. For example, if infinite quantities are available to buy and sell at prices $s^a\in\reals^{\bar J}$ and $s^b\in\reals^{\bar J}$, respectively, then $\dom S_0^*=[s^b,s^a]$.

It turns out that, in the absence of portfolio constraints, the linearity condition in \thref{ass:adg} becomes the classical {\em no-arbitrage} condition
\begin{equation}\tag{NA}\label{na}
x\in\N,\ \sum_{t=0}^{T-1} x_t\cdot\Delta s_{t+1}\ge 0\ a.s.\implies\sum_{t=0}^{T-1} x_t\cdot\Delta s_{t+1}= 0\ a.s.;
\end{equation}
see \cite[Section~5.5]{pp22}. The lower bound in \thref{ass:adg} holds, in particular, if there exists a martingale measure $Q\ll P$ such that
\[
dQ/dP\in\Y\cap\dom EV^*,
\]
$V$ is deterministic and either
\begin{equation*}
\limsup_{u\to-\infty} \frac{uV'(u)}{V(u)}<1\quad\text{or}\quad \liminf_{u\to+\infty} \frac{uV'(u)}{V(u)}> 1;
\end{equation*}
see \cite[Remark~53]{pp22}. More generally, \thref{ass:adg} is implied by the following.

\begin{assumption}\thlabel{ass:fm4}
\mbox{}
\begin{enumerate}
\item
The set
\[
\L:=\{(\bar x,x)\in\reals^{\bar J}\times\N \mid  S_0^\infty(\bar x) - \bar x\cdot \bar c - \sum_{t=0}^{T-1} x_t\Delta s_{t+1} \le 0\}
\]
is a linear space.
\item
There exists $y$ feasible in the reduced dual \eqref{rdssh} and $\epsilon$ such that $\lambda y \in\dom EV^*$ for all $\lambda\in(1-\epsilon,1+\epsilon)$.
\end{enumerate}
\end{assumption}

\begin{theorem}\thlabel{thm:fm4}
Under \thref{ass:fm4}, $\bar\varphi$ is closed and the infimum in its definition is attained for every $(z,u)\in\X\times\U$. In particular, $\inf\eqref{ssh}=\sup\eqref{dssh}$ and \eqref{ssh} has a solution. In this case, a dual feasible $(p,y)$ solves \eqref{dssh} if and only if there is a primal feasible $x$ such that
\begin{align*}
y &\in\partial V(u-\sum_{t=0}^{T-1} x_t\Delta s_{t+1}-\bar c\cdot\bar x+S_0(\bar x)),\\
p_t+y\Delta s_{t+1} &\in N_{D_t}(x_t)\quad t=0,\dots,T,\\
p_{-1}+y\bar c&\in\partial(yS_0)(\bar x)
\end{align*}
almost surely.
\end{theorem}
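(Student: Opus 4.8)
The plan is to verify that \thref{ass:fm4} implies \thref{ass:adg} for the normal integrand $f$ associated with \eqref{ssh}, and then invoke \thref{thm:varphi} to obtain lower semicontinuity of $\bar\varphi$ together with attainment, after which \thref{thm:relduality1} gives the absence of a duality gap, \thref{thm:dualityrel2} gives dual attainment, and \thref{thm:kktr} (specialized exactly as in \thref{thm:ssh}) yields the stated scenariowise optimality conditions. So the real content is the translation of the two parts of \thref{ass:fm4} into the two hypotheses of \thref{ass:adg}: the linearity of $\{x\in\N\mid f^\infty(x,0)\le 0\}$ and the existence of a $p\in\X_a^\perp$ with $\inf_{y}Ef^*(\lambda p,y)<\infty$ for $\lambda$ near $1$.

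For the linearity condition, I would first compute the recession integrand $f^\infty(\cdot,0,\omega)$. Since $V$ is a nondecreasing nonconstant convex loss function, $S_0$ is finite sublinear-type convex with $S_0(0)=0$, and the $\delta_{D_t}$ terms are positively homogeneous in the recession sense, the recession function in the direction $(\bar x,x)$ with $u=0$ should reduce to
\[
f^\infty((\bar x,x),0,\omega)=V^\infty\!\Big(S_0^\infty(\bar x)-\bar c(\omega)\cdot\bar x-\sum_{t=0}^{T-1}x_t\cdot\Delta s_{t+1}(\omega)\Big)+\sum_{t=0}^{T-1}\delta_{D_t^\infty(\omega)}(x_t),
\]
using that the recession of a composition with the nondecreasing $V$ passes $V^\infty$ through the convex inner argument. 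Because $V$ is nondecreasing and nonconstant, $V^\infty$ is nondecreasing with $V^\infty(r)\le 0$ forcing $r\le 0$ (up to the flat part), so on the feasible directions the condition $f^\infty((\bar x,x),0)\le 0$ collapses to membership in the set $\L$ of \thref{ass:fm4}(1) intersected with the recession constraints $x_t\in D_t^\infty$. Part (1) of \thref{ass:fm4} asserts precisely that $\L$ is a linear space; I would then argue that the portfolio-constraint recession terms are compatible (e.g.\ $0\in D_t$ gives $D_t^\infty$ a cone containing the relevant directions) so that the full set remains a linear space. This is the step I expect to be the main obstacle, since it requires carefully matching the abstract recession calculus for the $V$-composition and the sublinear $S_0^\infty$ against the concrete linear-space hypothesis, and handling the flat directions of $V^\infty$ correctly.

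For the lower-bound condition, I would start from \thref{ass:fm4}(2): there is a $y$ feasible in the reduced dual \eqref{rdssh} with $\lambda y\in\dom EV^*$ for $\lambda$ near $1$. Using the construction in \thref{rdssh}, I would set
\[
p_t:=E_t[y\Delta s_{t+1}]-y\Delta s_{t+1},\qquad p_{-1}:=\frac{E[y\bar c]}{E[y]}y-y\bar c,
\]
which lies in $\X_a^\perp$ by property A$'$ together with \thref{lem:vperp}. The explicit formula for $f^*$ computed before \eqref{dssh} then shows that $Ef^*(\lambda p,\lambda y)$ equals the reduced-dual objective evaluated with $\lambda y$ up to the $EV^*(\lambda y)$ and support-function terms; feasibility of $y$ in \eqref{rdssh} controls the $\sigma_{D_t}$ and $(yS_0)^*$ contributions (they are finite, indeed the calibration constraint keeps the $S_0^*$ term finite), while $\lambda y\in\dom EV^*$ keeps $EV^*(\lambda y)$ finite. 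Hence $\inf_y Ef^*(\lambda p,y)\le Ef^*(\lambda p,\lambda y)<\infty$ for all $\lambda\in(1-\epsilon,1+\epsilon)$, as required. Feasibility of \eqref{ssh} is the remaining part of \thref{ass:adg}, which I would take from the standing feasibility of \eqref{ssh} in the theorem's hypotheses.

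With both conditions of \thref{ass:adg} verified, \thref{thm:varphi} gives that $\bar\varphi$ is lower semicontinuous on $\X\times\U$ with attained infima, which is the first assertion. Then \thref{thm:relduality1} yields $\inf\eqref{ssh}=\sup\eqref{dssh}$, \thref{thm:dualityrel2} (applied with $\partial\bar\varphi(0,\bar u)\ne\emptyset$, nonempty because $\bar\varphi$ is now closed and finite at $(0,c)$) gives a dual solution, and \eqref{ssh} has a primal solution by attainment of the infimum defining $\bar\varphi(0,c)$. Finally, the equivalence characterizing dual optimality is exactly \thref{thm:ssh}(2) rewritten via the reduced-dual substitution of \thref{rdssh}, so the listed subdifferential and normal-cone conditions follow directly; no new computation beyond specializing \thref{thm:kktr} is needed here.
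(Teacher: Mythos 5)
Your proposal is correct and takes essentially the same route as the paper's proof: the same recession formula for $f^\infty$ (the paper cites Rockafellar and \cite{pen99} for it), the same observation that $\{c\in\reals\mid V^\infty(c)\le 0\}=\reals_-$ for a nondecreasing nonconstant $V$, the same choice $p_t=E_t[y\Delta s_{t+1}]-y\Delta s_{t+1}$, $p_{-1}=\frac{E[y\bar c]}{E[y]}y-y\bar c$ tested at $\lambda y$ to verify \thref{ass:adg}, and the same final appeal to \thref{thm:varphi} together with the scenariowise KKT characterization (the paper invokes \thref{thm:ssh}, which is exactly your specialization of \thref{thm:kktr}). The two points you leave hedged are precisely the points the paper's own proof passes over silently---it asserts that linearity of $\L$ yields the linearity condition of \thref{ass:adg} without addressing the extra $\delta_{D_t^\infty}$ terms, and it invokes \thref{thm:ssh} without separately establishing primal feasibility---so your attempt is no less complete than the paper's; the only genuine slip is your side remark that closedness and finiteness of $\bar\varphi$ at $(0,c)$ would force $\partial\bar\varphi(0,c)\ne\emptyset$ (false in general in infinite dimensions), but dual attainment is neither claimed in the theorem nor needed for the stated equivalence, which follows from no duality gap plus primal attainment alone.
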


\begin{proof}
By \cite[Theorem 9.3]{roc70a} and \cite[Theorem 7.3]{pen99},
\[
f^\infty(x,u,\omega)=V^\infty\left(u - \sum_{t=0}^{T-1}x_t\cdot\Delta s_{t+1}(\omega) - \bar c(\omega)\cdot \bar x + S_0^\infty(\bar x),\omega\right)+ \sum_{t=0}^{T-1}\delta_{D_t^\infty(\omega)}(x_t,\omega).
\]
Since $V$ is nonconstant and nondecreasing,
\[
\{c\in\reals\mid V^\infty(c)\le 0\}=\reals_-
\]
so the linearity condition in \thref{ass:fm4} implies the one in \thref{ass:adg}. For $y$ from \thref{ass:fm4}, $p$ defined by
\[
p_{-1}:=\frac{E[y\bar c]}{E[y]}y-y\bar c\quad\text{and}\quad p_t=E_t[y\Delta s_{t+1}]-y\Delta s_{t+1}\quad t=0,\ldots,T-1,
\]
satisfies \thref{ass:adg}. Thus the claims follow from \thref{thm:varphi,thm:ssh}.
\end{proof}

Given a convex function $g$ on $\reals^n$, the set
\[
\lin g=\{x\in\reals^n \mid g^\infty(x)=-g^\infty(-x)\}
\]
is called the {\em lineality space} of $g$.

\begin{example}[Robust no-arbitrage condition]
Assume that there are no portfolio constraints and that there exists a cost function $\tilde S_0$ such that
\begin{align*}
  \tilde S_0(x,\omega)&\le S_0^\infty(x,\omega)\quad\forall x\in\reals^{\bar J},\\
  \tilde S_0(x,\omega)&< S_0^\infty(x,\omega)\quad\forall x\notin\lin S_0(\cdot,\omega)
\end{align*}
and the market model described by $\tilde S_0$ and $s$ satisfies the no-arbitrage condition
\begin{equation}\label{na}
\C \cap L^0_+=\{0\},
\end{equation}
where
\[
\C:=\{c\in\U\mid \exists (\bar x,x)\in\reals^{\bar J}\times\N :\, \sum_{t=0}^{T-1}x_t\cdot\Delta s_{t+1}+\bar x\cdot c-\tilde S_0(\bar x)\ge c\quad a.s.\}.
\]
Then the linearity condition in \thref{ass:fm4} holds. A violation of \eqref{na} would mean that there is a trading strategy $(\bar x,x)$ that superhedges a nonzero nonnegative claim $c$.
\end{example}

\begin{proof}
If the linearity condition fails, there is a $(\bar x,x)\in\L$ such that 
\[
S_0^\infty(-\bar x) + \bar x\cdot \bar c + \sum_{t=0}^{T-1} x_t\Delta s_{t+1}>0
\]
on a set $A\in\F$ with $P(A)>0$. It suffices to show that $(\bar x,x)$ is an arbitrage strategy for $\tilde S_0$. Since $(\bar x,x)\in\L$, and $\tilde S_0\le S_0^\infty$, we have
\[
\tilde S_0(\bar x) - \bar x\cdot \bar c - \sum_{t=0}^{T-1} x_t\Delta s_{t+1} \le 0.
\]
If $\bar x\notin\lin S_0$, then $\tilde S_0(x)< S_0^\infty(x)$ and the inequality is strict so $(\bar x,x)$ is an arbitrage strategy. If $\bar x\in\lin S_0$, then $\tilde S_0(\bar x)\le S_0^\infty(\bar x)=-S_0^\infty(-\bar x)$ so
\[
  \tilde S_0(\bar x) - \bar x\cdot \bar c - \sum_{t=0}^{T-1} x_t\Delta s_{t+1} \le -S_0^\infty(-\bar x) - \bar x\cdot \bar c - \sum_{t=0}^{T-1} x_t\Delta s_{t+1}<0
\]
on $A$ so $(\bar x,x)$ is an arbitrage strategy in this case too.
\end{proof}

\section{Appendix}

Given extended real-valued random variables $\xi_1$ and $\xi_2$, their pointwise sum $\xi_2+\xi_2$ is well-defined by the usual algebraic operations of the extended real-line $\ereals$ except when one of them takes the value $+\infty$ and  the other one $-\infty$. In this exceptional case, we define the sum as $+\infty$.

The proofs of the following two lemmas can be found in \cite{pp22}.

\begin{lemma}\thlabel{lem:Esum}
Given extended real-valued random variables $\xi_1$ and $\xi_2$, we have
\[
E[\xi_1+\xi_2]=E[\xi_1]+E[\xi_2]
\]
under any of the following:
\begin{enumerate}
\item\label{int1}
  $\xi_1^+,\xi_2^+\in L^1$ or $\xi_1^-,\xi_2^-\in L^1$.
  \item\label{int2}
  $\xi_1\in L^1$ or $\xi_2\in L^1$,
\item\label{int4}
  $\xi_1$ or $\xi_2$ is $\{0,+\infty\}$-valued.
\end{enumerate}
\end{lemma}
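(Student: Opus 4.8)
The plan is to reduce all three cases to a single pointwise identity between nonnegative integrands and then to bookkeep the possibly-infinite expectations case by case. The key observation is that whenever the pointwise sum $\xi_1+\xi_2$ is produced by ordinary extended-real arithmetic almost surely — equivalently, whenever the set $\{\xi_1=+\infty,\ \xi_2=-\infty\}\cup\{\xi_1=-\infty,\ \xi_2=+\infty\}$ is $P$-null — the four-function identity
\[
(\xi_1+\xi_2)^+ + \xi_1^- + \xi_2^- = (\xi_1+\xi_2)^- + \xi_1^+ + \xi_2^+
\]
holds as an equality of $[0,+\infty]$-valued measurable functions (one checks it value by value, all six terms being nonnegative, both sides being $+\infty$ exactly when an infinite value is present). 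Since additivity of the integral is unconditional for nonnegative integrands, this yields the master identity
\begin{equation}\tag{$\star$}
E[(\xi_1+\xi_2)^+] + E[\xi_1^-] + E[\xi_2^-] = E[(\xi_1+\xi_2)^-] + E[\xi_1^+] + E[\xi_2^+]
\end{equation}
in $[0,+\infty]$. The whole argument then consists of deciding, in each case, which of these six expectations are finite and cancelling only those; the single danger to avoid is cancelling an infinite term.

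First I would dispose of the addition convention. In \ref{int1} and \ref{int2} the exceptional value $+\infty+(-\infty):=+\infty$ never occurs on a set of positive measure: if $\xi_1^+,\xi_2^+\in L^1$ then $\xi_i<+\infty$ a.s., if $\xi_1^-,\xi_2^-\in L^1$ then $\xi_i>-\infty$ a.s., and if $\xi_1\in L^1$ then $\xi_1$ is finite a.s.; hence $(\star)$ is available. For \ref{int1}, suppose first $\xi_1^+,\xi_2^+\in L^1$. From $(\xi_1+\xi_2)^+\le \xi_1^++\xi_2^+$ the three positive-part expectations in $(\star)$ are finite, so both $E[\xi_i]$ and $E[\xi_1+\xi_2]$ lie in $[-\infty,+\infty)$. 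If moreover $E[\xi_1^-],E[\xi_2^-]<\infty$, every term of $(\star)$ is finite and rearranging gives the claim; otherwise the left side of $(\star)$ is $+\infty$, forcing $E[(\xi_1+\xi_2)^-]=+\infty$, whence $E[\xi_1+\xi_2]=-\infty$, while at least one of $E[\xi_1],E[\xi_2]$ is $-\infty$ and neither is $+\infty$, so $E[\xi_1]+E[\xi_2]=-\infty$ too. The subcase $\xi_1^-,\xi_2^-\in L^1$ is symmetric, exchanging the roles of positive and negative parts and of $+\infty$ and $-\infty$.

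Case \ref{int2} reduces to \ref{int1}: with $\xi_1\in L^1$ we have $\xi_1^+\in L^1$, so if $\xi_2^+\in L^1$ the first subcase applies. If instead $E[\xi_2^+]=+\infty$, then $E[\xi_2]=+\infty$ and, $E[\xi_1]$ being finite, the right-hand side is $+\infty$; on the other side the bound $\xi_2^+\le(\xi_1+\xi_2)^++\xi_1^-$ together with $E[\xi_1^-]<\infty$ forces $E[(\xi_1+\xi_2)^+]=+\infty$, so $E[\xi_1+\xi_2]=+\infty$ as well. Case \ref{int4} I would handle directly, since here the exceptional convention is genuinely used. Say $\xi_2$ is $\{0,+\infty\}$-valued and set $A:=\{\xi_2=+\infty\}$; then $\xi_1+\xi_2=+\infty$ on $A$ — by ordinary arithmetic where $\xi_1>-\infty$ and by the convention $+\infty+(-\infty):=+\infty$ where $\xi_1=-\infty$ — while $\xi_1+\xi_2=\xi_1$ on $A^c$. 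If $P(A)=0$ then $\xi_2=0$ a.s. and the identity is trivial; if $P(A)>0$ then $E[\xi_2]=+\infty$, so the right-hand side is $+\infty$ by the convention whatever $E[\xi_1]$ is, and $(\xi_1+\xi_2)^+\ge(+\infty)\one_A$ gives $E[(\xi_1+\xi_2)^+]=+\infty$, i.e.\ $E[\xi_1+\xi_2]=+\infty$.

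The only real obstacle is the bookkeeping of infinities: once one commits to $(\star)$ and refuses to cancel any expectation not first shown to be finite, each case is routine. The asymmetry introduced by the convention $+\infty+(-\infty):=+\infty$ surfaces only in case \ref{int4}, where it is precisely what makes the two sides agree; elsewhere the hypotheses exclude the exceptional sum, so that $(\star)$ and ordinary signed additivity suffice.
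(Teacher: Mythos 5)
Your proof is correct, but there is nothing in the paper to compare it against: the paper does not prove this lemma at all, it simply defers to the reference \cite{pp22}. So the relevant question is whether your self-contained argument stands on its own, and it does. The master identity $(\star)$ holds pointwise (outside the exceptional set, and in fact even on it), additivity of the integral is unconditional for $[0,+\infty]$-valued measurable functions, and your case analysis never cancels a term that has not first been shown finite. Two points deserve a brief word. First, recall the paper's integral convention: $E[\xi]:=+\infty$ unless $\xi^+\in L^1$. This convention is asymmetric, so your appeal to ``symmetry'' in the subcase $\xi_1^-,\xi_2^-\in L^1$ of item 1 is slightly delicate; it is nevertheless legitimate, because there $(\xi_1+\xi_2)^-\le\xi_1^-+\xi_2^-$ is also integrable, so none of the three variables ever has both parts non-integrable, and the paper's convention then coincides with the usual symmetric definition (alternatively, the mirrored argument runs directly, since $E[\xi^+]=+\infty$ forces $E[\xi]=+\infty$ under the convention, even more easily than in the original subcase). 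Second, in item 3 with $P(A)>0$ you assert the right-hand side is $+\infty$ ``whatever $E[\xi_1]$ is''; this tacitly applies the exceptional convention $+\infty+(-\infty):=+\infty$ to the sum of the two \emph{expectations}, not just to the pointwise sum of the random variables. That is the only reading under which the lemma's statement is meaningful in that case, so the step is fine, but it is worth making explicit. With those two remarks, the proof is complete.
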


\begin{lemma}\thlabel{lem:ce}
Let $\xi_1$ and $\xi_2$ be extended real-valued random variables.
\begin{enumerate}
\item
  If $\xi_1$ and $\xi_2$ are quasi-integrable and satisfy any of the conditions in Lemma~\ref{lem:Esum}, then $\xi_1+\xi_2$ is quasi-integrable and
  \[
  E^\G[\xi_1+\xi_2] = E^\G [\xi_1] + E^\G[\xi_2].
  \]
\item
  If $\xi_2$ and $(\xi_1\xi_2)$ are quasi-integrable, and $\xi_1$ is $\G$-measurable, then
  \[
  E^\G[\xi_1\xi_2] = \xi_1E^\G[\xi_2].
  \]
\end{enumerate}
\end{lemma}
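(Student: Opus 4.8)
The plan is to reduce both extended-real-valued identities to the ordinary additivity result \thref{lem:Esum}, using two pieces of the elementary theory of conditional expectation that I take as known: for nonnegative random variables $\eta_1,\eta_2\ge 0$ one has $E^\G[\eta_1+\eta_2]=E^\G\eta_1+E^\G\eta_2$ in $[0,\infty]$ and $E^\G\eta^n\upto E^\G\eta$ whenever $\eta^n\upto\eta$; and for a quasi-integrable $\xi$ whose negative (resp.\ positive) part is integrable, $E^\G\xi:=E^\G\xi^+-E^\G\xi^-$ is a well-defined $\G$-measurable quasi-integrable random variable satisfying the averaging identity $E[1_A\xi]=E[1_AE^\G\xi]$ for every $A\in\G$.

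For the first claim, the key observation is that each of the three conditions of \thref{lem:Esum} is \emph{inherited} by the pair $(E^\G\xi_1,E^\G\xi_2)$. Indeed, under condition~1 we have $(E^\G\xi_i)^+\le E^\G\xi_i^+$ with $E[E^\G\xi_i^+]=E\xi_i^+<\infty$, so the positive parts stay integrable; condition~2 is preserved because $E^\G$ maps $L^1$ into $L^1$; and under condition~3, if $\xi_2$ is $\{0,+\infty\}$-valued then $E^\G\xi_2$ again takes only the values $0$ and $+\infty$ (it is $+\infty$ exactly on the $\G$-measurable set where the conditional probability of $\{\xi_2=+\infty\}$ is positive). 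Hence \thref{lem:Esum} applies both to $(\xi_1,\xi_2)$ and to $(E^\G\xi_1,E^\G\xi_2)$, and in particular $E^\G\xi_1+E^\G\xi_2$ is well defined. I would then verify the identity through the averaging characterization: for each $A\in\G$, \thref{lem:Esum} gives $E[1_A(\xi_1+\xi_2)]=E[1_A\xi_1]+E[1_A\xi_2]=E[1_AE^\G\xi_1]+E[1_AE^\G\xi_2]=E[1_A(E^\G\xi_1+E^\G\xi_2)]$. Since $E^\G(\xi_1+\xi_2)$ and $E^\G\xi_1+E^\G\xi_2$ are both $\G$-measurable, quasi-integrable, and integrate to the same value over every $A\in\G$, they coincide almost surely.

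For the second claim I would climb the usual approximation ladder in the $\G$-measurable factor $\xi_1$. The base case $\xi_1=1_A$ with $A\in\G$ is immediate from the definition of $E^\G$, and linearity extends it to nonnegative simple $\G$-measurable $\xi_1$; taking simple $\xi_1^n\upto\xi_1$ with $\xi_2\ge0$ and invoking monotone convergence for $E^\G$ then yields $E^\G[\xi_1\xi_2]=\xi_1E^\G\xi_2$ in $[0,\infty]$ (under the convention $0\cdot\infty=0$) for all nonnegative $\G$-measurable $\xi_1$ and nonnegative $\xi_2$. For general signs I would expand $\xi_1\xi_2$ into the four nonnegative products $\xi_1^\pm\xi_2^\pm$, apply the nonnegative case to each, and recombine using the first claim; the hypothesis that $\xi_1\xi_2$ be quasi-integrable is precisely what makes this recombination admissible for \thref{lem:Esum} and free of an $\infty-\infty$ clash.

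I expect the genuine work to be the extended-real bookkeeping rather than any analytic subtlety. The two delicate points are (i) the uniqueness step ``equal $\G$-conditional averages over all $A\in\G$ force almost-sure equality'', which for quasi-integrable random variables has to be argued separately on the $\G$-measurable sets where a conditional expectation of a positive or negative part is infinite, and (ii) the correct treatment of the convention $0\cdot(\pm\infty)=0$ on $\{\xi_1=0\}$ when $E^\G\xi_2$ is infinite. Both of these reduce, case by case, to \thref{lem:Esum} and the sign conventions fixed just before it, so no further machinery should be needed.
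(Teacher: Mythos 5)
A preliminary remark: the paper itself does not prove this lemma but cites \cite{pp22} for it, so there is no internal proof to compare against; what follows assesses your argument on its own terms. Your architecture is the natural reduction to \thref{lem:Esum}: the averaging characterization plus a uniqueness lemma for claim~1, and the indicator--simple--monotone-limit ladder followed by the four-product decomposition for claim~2. This works under conditions~1 and~2 of \thref{lem:Esum}, where $(\xi_1+\xi_2)^\pm\le\xi_1^\pm+\xi_2^\pm$ almost surely does give quasi-integrability of the sum, and your plan for claim~2 is sound, since quasi-integrability of $\xi_1\xi_2$ is exactly what confines any $\infty-\infty$ clash in the recombination to a null set.

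The genuine gap is claim~1 under condition~3. Your proof applies $E^\G$ to $\xi_1+\xi_2$ from the outset, i.e.\ it presupposes that $\xi_1+\xi_2$ is quasi-integrable; but that is part of the conclusion, you never verify it, and in the framework you yourself set up (conditional expectations defined only for quasi-integrable variables) it can fail. Take $B\in\F$ with $0<P(B)<1$ and a finite-valued $\eta\ge 0$ with $E[\eta 1_{\Omega\setminus B}]=+\infty$, and put $\xi_1:=-\eta$, $\xi_2:=+\infty\cdot 1_B$. Both are quasi-integrable ($\xi_1^+=0$, $\xi_2^-=0$) and $\xi_2$ is $\{0,+\infty\}$-valued, so all hypotheses hold; yet $(\xi_1+\xi_2)^+=+\infty\cdot 1_B$ and $(\xi_1+\xi_2)^-=\eta 1_{\Omega\setminus B}$ are both non-integrable. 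Then $E^\G[\xi_1+\xi_2]$ is undefined in your setting, the chain starting from $E[1_A(\xi_1+\xi_2)]$ cannot be completed, and the uniqueness step has nothing to compare; your inheritance checks do not repair this, since they concern only the pair $(E^\G\xi_1,E^\G\xi_2)$ on the right-hand side. This case is not extended-real bookkeeping: it forces one to invoke the paper's conventions ($+\infty+(-\infty):=+\infty$ pointwise, and the conditional analogue of the integral convention, $E^\G\xi:=+\infty$ wherever $E^\G[\xi^+]=+\infty$) and to argue directly. A clean repair: let $G:=\{E^\G 1_{\{\xi_2=+\infty\}}>0\}\in\G$; on $G$ one has $(\xi_1+\xi_2)^+\ge+\infty\cdot 1_{\{\xi_2=+\infty\}}$, hence $E^\G[(\xi_1+\xi_2)^+]=+\infty$ and both sides of the identity equal $+\infty$, while on $\Omega\setminus G$ one has $\xi_2=0$ almost surely, so both sides reduce to $E^\G\xi_1$. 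With that case split added---and with your uniqueness lemma actually written out by localizing to sets such as $\{\eta_1<\eta_2\le c\}$ and $\{\eta_1\le c,\ \eta_2=+\infty\}$, which is where the real work for quasi-integrable variables lies---the proof closes.
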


\bibliographystyle{plain}
\bibliography{sp}

\end{document}